\documentclass{amsart}

\newtheorem{theorem}{Theorem}[section]
\newtheorem{lemma}[theorem]{Lemma}
\newtheorem{proposition}[theorem]{Proposition}
\newtheorem{corollary}[theorem]{Corollary}

\newtheorem{problem}[theorem]{Problem}

\theoremstyle{definition}
\newtheorem{definition}[theorem]{Definition}

\newtheorem{remark}[theorem]{Remark}

\def\cocoa{{\hbox{\rm C\kern-.13em o\kern-.07em C\kern-.13em o\kern-.15em A}}}


\usepackage{amscd,amssymb}
\usepackage{youngtab}

\begin{document}

\title[Weak polynomial identities of $3\times 3$ skew-symmetric matrices]
{Cocharacters for the weak polynomial identities of the Lie algebra of $3\times 3$ skew-symmetric matrices}

\author[M\'aty\'as Domokos and Vesselin Drensky]
{M\'aty\'as Domokos and Vesselin Drensky}
\address{Alfr\'ed R\'enyi Institute of Mathematics,
Re\'altanoda utca 13-15, 1053 Budapest, Hungary}
\email{domokos.matyas@renyi.hu}
\address{Institute of Mathematics and Informatics,
Bulgarian Academy of Sciences,
Acad. G. Bonchev Str., Block 8,
1113 Sofia, Bulgaria}
\email{drensky@math.bas.bg}

\thanks{This project was carried out in the framework of the exchange program
between the Hungarian and Bulgarian Academies of Sciences.
It was partially supported by the Hungarian National Research, Development and Innovation Office,  NKFIH K 119934,  K 132002, 
and by Grant KP-06 N 32/1 of 07.12.2019 ``Groups and Rings - Theory and Applications'' of the Bulgarian National Science Fund.}
\thanks{Corresponding author: M. Domokos}

\subjclass[2010]{16R10; 16R30; 17B01; 17B20; 17B45; 20C30.}

\keywords{weak polynomial identities, skew-symmetric matrices, classical invariant theory}

\maketitle

\begin{abstract}
Let $so_3(K)$ be the Lie algebra of $3\times 3$ skew-symmetric matrices over a field $K$ of characteristic 0.
The ideal $I(M_3(K),so_3(K))$ of the weak polynomial identities of the pair $(M_3(K),so_3(K))$ consists of the elements $f(x_1,\ldots,x_n)$
of the free associative algebra $K\langle X\rangle$ with the property that $f(a_1,\ldots,a_n)=0$
in the algebra $M_3(K)$ of all $3\times 3$ matrices for all $a_1,\ldots,a_n\in so_3(K)$.
The generators of $I(M_3(K),so_3(K))$ were found by Razmyslov in the 1980s.
In this paper the cocharacter sequence of $I(M_3(K),so_3(K))$ is computed.
In other words, the ${\mathrm{GL}}_p(K)$-module structure of the algebra generated by $p$ generic skew-symmetric matrices is determined.
Moreover, the same is done for the closely related algebra of $\mathrm{SO}_3(K)$-equivariant polynomial maps
from the space of $p$-tuples of $3\times 3$ skew-symmetric matrices into $M_3(K)$ (endowed with the conjugation action).
In the special case $p=3$ the latter algebra is a module over a $6$-variable polynomial subring
in the algebra of $\mathrm{SO}_3(K)$-invariants of triples of $3\times 3$ skew-symmetric matrices, and a free resolution of this module is found.
The proofs involve methods and results of classical invariant theory, representation theory of the general linear group
and explicit computations with matrices.
\end{abstract}

\section{Introduction} \label{sec:intro}

This paper can be considered as a relative of the well-known paper of Procesi \cite{Procesi:2} whose abstract says that
``In a precise way the ring of $m$ generic $2\times 2$ matrices and related rings are described.''  In the present work we also describe
the ring of $m$ generic $3\times 3$ skew-symmetric matrices and a related ring
in a precise way, but in somewhat different
terms than \cite{Procesi:2} (and we restrict to the case of a characteristic zero base field).

Take $3\times 3$ generic skew-symmetric matrices
\[
t_k=\left(\begin{matrix}
0&t^{(k)}_{12}&t_{13}^{(k)}\\
-t_{12}^{(k)}&0&t_{23}^{(k)}\\
-t^{(k)}_{13}&-t^{(k)}_{23}&0\\
\end{matrix}\right),\quad k=1,\ldots,p,
\]
where $T_p=\{t^{(k)}_{ij}\mid i,j=1,2,3;\  k=1,\ldots,p\}$ are commuting variables.
Till the end of the paper we fix a field $K$ of characteristic zero.
Then $t_1,\dots,t_p$ are elements of the $3\times 3$ matrix algebra $M_3(K[T_p])$ over the polynomial ring $K[T_p]$.
As usual, we shall identify the elements of $K[T_p]$ with polynomial maps
$so_3(K)^{\oplus p}\to K$ in the obvious way, where $so_3(K)$ is the space of $3\times 3$ skew-symmetric matrices over $K$,
which is also the Lie algebra of the special orthogonal group $\mathrm{SO}_3(K)=\{A\in K^{3\times 3}\mid AA^T=I,\ \det(A)=1\}$.
Accordingly, $M_3(K[T_p])$ is identified with the set of polynomial maps $so_3(K)^{\oplus p}\to M_3(K)$.
Denote by  ${\mathcal{F}}_p$ the associative $K$-subalgebra (with an identity element) of $M_3(K[T_p])$ generated by
$t_1,\dots,t_p$ (so the identity matrix $I$ is an element of ${\mathcal{F}}_p$ by definition):
\[
{\mathcal{F}}_p=K\langle t_1,\dots,t_p\rangle\subset M_3(K[T_p])
\]
The special orthogonal group $\mathrm{SO}_3(K)$ acts on $so_3(K)$ by conjugation (the adjoint action of $\mathrm{SO}_3(K)$ on its Lie algebra),
and $\mathrm{SO}_3(K)$ acts by simultaneous conjugation
on $so_3(K)^{\oplus p}$, the space of $p$-tuples of skew-symmetric $3\times 3$ matrices.
Also $\mathrm{SO}_3(K)$ acts on $M_3(K)$ by conjugation,
and we write ${\mathcal{E}}_p$ for the subset of $M_3(K[T_p])$ consisting of the $\mathrm{SO}_3(K)$-equivariant polynomial maps
$so_3(K)^{\oplus p}\to M_3(K)$. Clearly ${\mathcal{E}}_p$ is an associative $K$-subalgebra of
$M_3(K[T_p])$, and ${\mathcal{E}}_p$ contains ${\mathcal{F}}_p$.  If follows easily from known results
(see  Corollary~\ref{cor:cov-generators}) that
\[
{\mathcal{E}}_p=K\langle t_1,\dots,t_p,\ \mathrm{tr}(t_it_j) I,\ \mathrm{tr}(t_kt_lt_m) I\mid
i\le j,\ k<l<m \rangle\subset M_3(K[T_p]),
\]
where $I$ stands for  the $3\times 3$ identity matrix throughout the paper.

In the present paper we aim at a combinatorial description of the algebras ${\mathcal{F}}_p$ and ${\mathcal{E}}_p$.
The general linear group ${\mathrm{GL}}_p(K)$ acts on ${\mathcal{E}}_p$ via graded
$K$-algebra automorphisms.
Note first that ${\mathrm{GL}}_p(K)$ acts (from the right) on $so_3(K)^{\oplus p}$ as follows. For
\[
g=\left(\begin{matrix}
g_{11}&\ldots&g_{1p}\\
\vdots&\ddots&\vdots\\
g_{p1}&\ldots&g_{pp}\\
\end{matrix}\right),\text{ and  }a=(a_1,\dots,a_p)\in so_3(K)^{\oplus p}
\]
we have
\[
(a_1,\dots,a_p)\cdot g=(\sum_{i=1}^pg_{i1}a_i,\sum_{i=1}^pg_{i2}a_2,
\dots,\sum_{i=1}^pg_{ip}a_i).
\]
This induces a left  action (via graded $K$-algebra automorphisms) of ${\mathrm{GL}}_p(K)$
on the algebra $K[T_p]$ (respectively $M_3(K[T_p])$) of polynomial maps
$so_3^{\oplus p}\to K$ (respectively $so_3^{\oplus p}\to M_3(K[T_p])$) in the standard way
(for a function $f$, we have $(g\cdot f)(a)=f(a\cdot g)$).
More explicitly, $g\cdot t_{ij}^{(k)}=\sum_{l=1}^pg_{lk}t_{ij}^{(l)}$, and for a matrix
$m=(m_{ij})_{i,j=1}^3\in M_3(K[T_p])$, we have $g\cdot m=(g\cdot m_{ij})_{i,j=1}^3$.
The ${\mathrm{GL}}_3(K)$-action on $so_3(K)^{\oplus p}$ commutes with the
$\mathrm{SO}_3(K)$-action, hence ${\mathcal{E}}_p$ is a ${\mathrm{GL}}_p(K)$-submodule of $M_3(K[T_p])$.
Obviously ${\mathcal{F}}_p$ is a ${\mathrm{GL}}_p(K)$-submodule in ${\mathcal{E}}_p$.

We shall determine the ${\mathrm{GL}}_p(K)$-module structure both for ${\mathcal{F}}_p$ and ${\mathcal{E}}_p$.
Our Theorem~\ref{main theorem}
(see also Theorem~\ref{thm:main theorem 2}) and Theorem~\ref{thm:covariant-GL} (i) 
(together with Lemma~\ref{semistandard tableaux from 5 to 3}) give the multiplicities of the irreducible ${\mathrm{GL}}_p(K)$-modules
as summands in ${\mathcal{F}}_p$ and in ${\mathcal{E}}_p$.
In fact, in the course of the proofs  highest weight vectors for each irreducible summand are explicitly provided.
In the case of ${\mathcal{E}}_p$ results from classical invariant theory allow to compute these multiplicities.
Then with explicit constructions we show that for almost all irreducibles these upper bounds are achieved even in ${\mathcal{F}}_p$.
In particular, our Theorem~\ref{thm:covariant-GL} (ii)
shows exactly the difference between ${\mathcal{E}}_p$ and its subspace ${\mathcal{F}}_p$; namely,
the ${\mathrm{GL}}_p(K)$-module ${\mathcal{E}}_p$ has the direct sum decomposition
\[
{\mathcal{E}}_p={\mathcal{F}}_p\oplus \bigoplus_{k=1}^{\infty}\langle\mathrm{tr}(t_1^{2k})I\rangle_{{\mathrm{GL}}_p(K)}.
\]
Here and later as well,
given a subset $U$ of a ${\mathrm{GL}}_p(K)$-module we write $\langle U\rangle_{{\mathrm{GL}}_p(K)}$ for the ${\mathrm{GL}}_p(K)$-submodule generated by $U$.
For $p\le q$, ${\mathcal{E}}_p$ is a subalgebra of ${\mathcal{E}}_q$ and ${\mathcal{F}}_p$ is a subalgebra of
${\mathcal{F}}_q$. It follows from general principles that for $p\ge 3$,
we have
\[
{\mathcal{F}}_p=\langle {\mathcal{F}}_3\rangle_{{\mathrm{GL}}_p(K)} \text{ and }
{\mathcal{E}}_p=\langle {\mathcal{E}}_3 \rangle_{{\mathrm{GL}}_p(K)}
\]
(see Section~\ref{subsec:wpi} and Corollary~\ref{cor:weyl}).
Therefore to a large extent, the combinatorial study of ${\mathcal{E}}_p$ and ${\mathcal{F}}_p$ can be reduced to the special case $p=3$.
We shall present the $3$-variable Hilbert series (i.e. the formal ${\mathrm{GL}}_3(K)$-character) of ${\mathcal{E}}_3$ as a rational function
(see Proposition~\ref{prop:hilbert_series}).
Furthermore, ${\mathcal{E}}_3$ is a module over the algebra $K[T_3]^{\mathrm{SO}_3(K)}$ of polynomial $\mathrm{SO}_3(K)$-invariants on $so_3(K)$,
and we shall determine the structure of this module
(see Theorem~\ref{thm:cov_3}).

The algebra ${\mathcal{F}}_p$ is isomorphic to the factor of the free associative algebra
$K\langle X_p\rangle =K\langle x_1,\dots,x_p\rangle$
modulo $I(M_3(K),so_3(K))\cap K\langle X_p\rangle$, the ideal of $p$-variable weak polynomial identities of the pair $(M_3(K),so_3(K))$.
Therefore the computation of the ${\mathrm{GL}}_p(K)$-module structure of ${\mathcal{F}}_p$
is the same thing as the computation of the cocharacter sequence of the ideal $I(M_3(K),so_3(K))$
of weak polynomial identities of the pair $(M_3(K),so_3(K))$.
In fact this was our original motivation for the present work, since weak polynomial identities play a significant role in the theory of PI-algebras.
An overview of some relevant results on weak polynomial identities is given in
Section~\ref{subsec:wpi}. 

We note that our computation is independent of the base field and the \emph{form} of the Lie algebra. In particular, Theorem~\ref{main theorem} can be interpreted as the 
computation of the cocharacter sequence for the weak polynomial identities of the pair 
$(M_3(K),\mathrm{ad}(sl_2(K))$, where $\mathrm{ad}$ stands for the adjoint representation of the Lie algebra $sl_2(K)$.

\section{Preliminaries}\label{sec:prel}

For a background on the mathematics used in this paper we recommend:
\begin{itemize}
\item On trace identities the paper by Procesi \cite{Procesi:1}
and the book by Razmyslov \cite[Chapter IV]{Razmyslov:4};

\item On invariant theory the book by Weyl \cite{Weyl};

\item On representation theory of the general linear group the book by Macdonald \cite{Macdonald}
and for the applications to algebras with polynomial identities the book by one of the authors \cite[Chapter 12]{Drensky:2}.
\end{itemize}

\subsection{Weak polynomial identities} \label{subsec:wpi}

\begin{definition}
Let $R$ be an associative algebra over a field $K$ and let $R^{(-)}$
be the Lie algebra with respect to the operation  $[r_1,r_2]=r_1r_2-r_2r_1$, $r_1,r_2\in R$.
Let $L$ be a Lie subalgebra of $R^{(-)}$ which generates $R$ as an associative algebra,
i.e., $R$ is an associative enveloping algebra of $L$.
The polynomial $f(x_1,\ldots,x_n)$ of the free associative algebra
$K\langle X\rangle=K\langle x_1,x_2,\ldots\rangle$
is called a weak polynomial identity for the pair $(R,L)$ if
$f(a_1,\ldots,a_n)=0$ in $R$ for all $a_1,\ldots,a_n\in L$.
The ideal $I(R,L)$ of the weak polynomial identities of $(R,L)$ is generated
by the system $B=\{f_j(x_1,\ldots,x_{n_j})\mid j\in J\}$
(and $B$ is called a basis of the weak polynomial identities of the pair $(R,L)$)
if $I(R,L)$ is the minimal ideal of weak polynomial identities containing $B$.
Then $I(R,L)$ is generated as an ideal by the polynomials
$f_j(u_1,\ldots,u_{n_j})$, $j\in J$, where $u_1,\ldots,u_{n_j}$ are Lie elements in $K\langle X\rangle$. 
We shall also use the expression `$f=g$ is a weak polynomial identity for $(R,L)$' with  some 
$f,g\in K\langle X\rangle$ if $f-g\in I(R,L)$.  
\end{definition}

Weak polynomial identities were introduced by Razmyslov \cite{Razmyslov:1, Razmyslov:2}
as a powerful tool in the solution of two important problems in the theory of PI-algebras.
In \cite{Razmyslov:1} Razmyslov found bases, over a field $K$ of characteristic 0,
of the weak polynomial identities of the pair $(M_2(K),sl_2(K))$,
the polynomial identities of the Lie algebra $sl_2(K)$ of traceless $2\times 2$ matrices,
and the polynomial identities of the associative algebra $M_2(K)$ of $2\times 2$ matrices.
Up till now, in the case of characteristic 0,
the algebras $sl_2(K)$ and $M_2(K)$ are the only nontrivial simple Lie and associative algebras
with known bases of their polynomial identities.
(Another proof for the basis of the weak polynomial identities of $(M_2(K),sl_2(K))$
is given in \cite{Drensky-Koshlukov}).
In \cite{Razmyslov:2} Razmyslov constructed, using weak polynomial identities of the pair $(M_q(K),sl_q(K))$, a central polynomial
for the algebra $M_q(K)$ of $q\times q$ matrices, solving an old problem of Kaplansky \cite{Kaplansky:1, Kaplansky:2}.
The existence of central polynomials for $M_q(K)$ was established independently with other methods by Formanek \cite{Formanek}. 
(For more information on the polynomial identities and central polynomials
for matrices see, e.g., \cite{Drensky:2, Drensky-Formanek}.)

Let ${\mathfrak g}\cong sl_2({\mathbb C})$ be the three-dimensional complex simple Lie algebra
and let $U({\mathfrak g})$ be its universal enveloping algebra.
In \cite{Razmyslov:3} Razmyslov showed that the ideal $I(U({\mathfrak g}),{\mathfrak g})$ satisfies the Specht property: 
it is finitely generated and the same holds for any ideal of weak polynomial identities which contains it.
Later, in \cite[Theorem 38.1]{Razmyslov:4} (page 251 in the Russian original and page 181 in the English translation)
he found an explicit basis of the weak polynomial identities of the pair $(M_q({\mathbb C}),\varrho({\mathfrak g}))$, where
$\varrho:{\mathfrak g}\to\text{End}_{\mathbb C}(V_q)\cong M_q({\mathbb C})$ is a $q$-dimensional irreducible representation of $\mathfrak g$.
The basis consists of three weak polynomial identities:
\[
s_3(x_1,x_2,x_3)x_4=x_4s_3(x_1,x_2,x_3),
\]
where
\[
s_3(x_1,x_2,x_3):=\sum_{\sigma\in S_3}\text{sign}(\sigma)x_{\sigma(1)}x_{\sigma(2)}x_{\sigma(3)}
\]
is the standard polynomial of degree 3,
\[
\delta\sum_{\sigma\in S_3}\text{sign}(\sigma)[x_4,x_{\sigma(1)},x_{\sigma(2)},x_{\sigma(3)}]=2x_4s_3(x_1,x_2,x_3),
\]
where the commutators are left normed, e.g., $[x_1,x_2,x_3]=[[x_1,x_2],x_3]$,
and $\delta=(q^2-1)/4$ is the value of the Casimir element in the representation $\varrho$,
and one more identity in two variables
\[
\text{ART}_q(x_1,x_2):=\text{ad}x_2\prod_{i=1}^{q-1}\left(L_{x_2}-\left(i-\frac{1-q}{2}\right)\text{ad}x_2\right)x_1=0.
\]
Here $L_r:R\to R$, $r\in R$, is the operator of left multiplication of the algebra $R$, defined by $r'\to rr'$, $r'\in R$,
and $\text{ad}r(r')=[r,r']$, $r,r'\in R$.
For $q=2$ this gives that the weak polynomial identities of the pair $(M_2({\mathbb C}),sl_2({\mathbb C}))$ follow from the weak identity $[x_1^2,x_2]=0$,
which was established already in \cite{Razmyslov:1}.
The Lie algebra $sl_2({\mathbb C})$ is isomorphic to the Lie algebra $so_3({\mathbb C})$
of $3\times 3$ skew-symmetric matrices and after easy computations the result from \cite[Theorem 38.1]{Razmyslov:4} gives:

\begin{theorem}\label{theorem of Razmyslov}
The weak polynomial identities of the pair $(M_3({\mathbb C}),so_3({\mathbb C}))$ follow from its weak polynomial identities
\[
s_3(x_1,x_2,x_3)x_4=x_4s_3(x_1,x_2,x_3),
\]
\[
\sum_{\sigma\in S_3}\text{\rm sign}(\sigma)[x_4,x_{\sigma(1)},x_{\sigma(2)},x_{\sigma(3)}]=x_4s_3(x_1,x_2,x_3),
\]
and
\[
x_1[x_1,x_2]x_1=0.
\]
\end{theorem}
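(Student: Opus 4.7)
The plan is to reduce the theorem to Razmyslov's general result \cite[Theorem~38.1]{Razmyslov:4}, which treats the pair $(M_q({\mathbb C}),\varrho({\mathfrak g}))$ for each $q$-dimensional irreducible representation of ${\mathfrak g}\cong sl_2({\mathbb C})$. Under the Lie algebra isomorphism $sl_2({\mathbb C})\cong so_3({\mathbb C})$, in which the $3$-dimensional irreducible representation of $sl_2$ corresponds to the defining representation of $so_3$, Razmyslov's theorem asserts that $I(M_3({\mathbb C}),so_3({\mathbb C}))$ is generated by three identities. Two of these translate directly: the first is literally identity (1), while the second differs from identity (2) only by the Casimir scalar $\delta=(q^2-1)/4=2$, so dividing by $2$ yields identity (2). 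The content of the theorem therefore reduces to establishing the equivalence, modulo identities (1) and (2), of Razmyslov's third identity $\text{ART}_3(x_1,x_2)=0$ and our new identity $x_1[x_1,x_2]x_1=0$.

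First I would verify that $x_1[x_1,x_2]x_1=0$ is indeed a weak polynomial identity for $(M_3(K),so_3(K))$. Identifying $so_3(K)$ with $K^3$ as a cross-product Lie algebra, so that $a\in so_3$ corresponds to $v_a\in K^3$ with $au=v_a\times u$, one obtains the formulas $a^2 = v_a v_a^T-(v_a\cdot v_a)I$ and $[a_1,a_2] = v_{a_2} v_{a_1}^T - v_{a_1} v_{a_2}^T$. A short matrix computation, using $a_1 v_1 = v_1\times v_1 = 0$, then yields $a_1[a_1,a_2]a_1=0$.

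Second, I would show that $\text{ART}_3(x_1,x_2)$ lies in the ideal of weak polynomial identities generated by the three new identities. Writing $L_{x_2}$, $R_{x_2}$, $\text{ad}(x_2) = L_{x_2}-R_{x_2}$ as pairwise commuting operators on $K\langle X\rangle$, expansion of the product in the definition of $\text{ART}_3$ yields the explicit monomial formula
\[
\text{ART}_3(x_1,x_2) \;=\; 2x_2^3 x_1 \;-\; 9 x_2^2 x_1 x_2 \;+\; 13 x_2 x_1 x_2^2 \;-\; 6 x_1 x_2^3.
\]
Swapping variables in the new identity yields $x_2[x_2,x_1]x_2 = x_2^2 x_1 x_2 - x_2 x_1 x_2^2 = 0$, which immediately reduces the middle two monomials to one. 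The boundary monomials $x_2^3 x_1$ and $x_1 x_2^3$ should then be eliminated by invoking polarizations of the new identity (for example, the consequence $x_1[x_1,x_2]x_2 + x_2[x_1,x_2]x_1 = 0$ obtained by substituting $x_1\to x_1+z$, extracting the linear-in-$z$ part, and then setting $z=x_2$), combined with suitable Lie-polynomial substitutions into identities (1) and (2).

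The main obstacle is precisely this final symbolic reduction: assembling an explicit linear combination of consequences of the three new identities that matches $\text{ART}_3(x_1,x_2)$ monomial-for-monomial. Because the boundary monomials $x_2^3 x_1$ and $x_1 x_2^3$ cannot be produced from the swapped new identity or its $x_2$-multiples in degree $4$ alone, identities (1) and (2) must enter non-trivially, presumably through substitutions such as $x_3\to[x_1,x_2]$ which yield nontrivial $2$-variable consequences of the $4$-variable identities. Once the monomial expansion of $\text{ART}_3$ is in hand, however, the remaining combinatorial bookkeeping is routine, justifying the authors' description of it as an ``easy computation''.
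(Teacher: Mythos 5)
Your reduction strategy is the one the paper intends (identity (1) is literal, identity (2) is Razmyslov's second divided by $\delta=(q^2-1)/4=2$, and the only real work is $\mathrm{ART}_3$), and your verification that $x_1[x_1,x_2]x_1=0$ holds on $so_3(K)$ via the cross-product model is correct. The genuine gap is in your treatment of $\mathrm{ART}_3$. The polynomial $2x_2^3x_1-9x_2^2x_1x_2+13x_2x_1x_2^2-6x_1x_2^3$ (which is indeed the faithful expansion of the displayed formula with constants $i-\frac{1-q}{2}\in\{2,3\}$) is \emph{not} a weak polynomial identity of $(M_3(K),so_3(K))$, so the final step you defer as ``routine bookkeeping'' is impossible: a non-identity cannot lie in the ideal generated by identities. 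Concretely, identify $t_i\leftrightarrow v_i$ as you do, so that $t_1t_2=v_2v_1^T-(v_1\cdot v_2)I$, $t_2^3=\frac12\mathrm{tr}(t_2^2)\,t_2=-|v_2|^2t_2$ and $t_2^2t_1t_2=t_2t_1t_2^2=-(v_1\cdot v_2)t_2^2$; substituting, your polynomial evaluates to $-2|v_2|^2v_1v_2^T+6|v_2|^2v_2v_1^T-4(v_1\cdot v_2)v_2v_2^T$, which is nonzero for generic $v_1,v_2$.

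The source of the trouble is the constant in the quoted $\mathrm{ART}_q$: diagonalizing a generic $x_2\in\varrho(\mathfrak g)$ with eigenvalues $\beta,0,-\beta$ and checking which joint $(L_{x_2},R_{x_2})$-eigenvalue pairs $(\alpha_i,\alpha_j)$ occur in elements of $\varrho(\mathfrak g)$ shows that $\mathrm{ad}x_2\prod_i(L_{x_2}-c_i\,\mathrm{ad}x_2)x_1$ vanishes identically precisely when the $c_i$ run over $\{0,1\}$, i.e.\ $c_i=i-\frac{q-1}{2}$ rather than $i-\frac{1-q}{2}$ (a sign slip in the transcription). With the correct constants,
\[
\mathrm{ART}_3(x_1,x_2)=L_{x_2}\bigl(L_{x_2}-\mathrm{ad}x_2\bigr)\mathrm{ad}x_2(x_1)
=L_{x_2}R_{x_2}\,\mathrm{ad}x_2(x_1)=x_2[x_2,x_1]x_2,
\]
which is \emph{literally} the third identity of the theorem with $x_1$ and $x_2$ interchanged. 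So no symbolic reduction modulo (1) and (2) is needed at all: once $\delta=2$ is inserted and $\mathrm{ART}_3$ is expanded correctly, the theorem is a direct transcription of Razmyslov's result, which is exactly the ``easy computation'' the paper alludes to. Your sanity check that $x_1[x_1,x_2]x_1$ really is a weak identity remains a useful (and correct) part of the argument.
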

(The result in \cite[Theorem 38.1]{Razmyslov:4} gives explicit bases of the weak polynomial identities also in the
infinite dimensional cases.)

As in the case of ordinary polynomial identities the symmetric group $S_n$ of degree $n$ acts from the left on the vector space
$P_n\subset K\langle X\rangle$ of multilinear polynomials of degree $n$ and for any ideal $I(R,L)$ of weak polynomial identities
$P_n\cap I(R,L)$ is an $S_n$-submodule of $P_n$. The sequence of $S_n$-characters $\chi_n(R,L)$ of
$P_n/(P_n\cap I(R,L))$, $n=0,1,2,\ldots$, is called the cocharacter sequence of $I(R,L)$. Then
\[
\chi_n(R,L)=\sum_{\lambda\vdash n}m_{\lambda}\chi_{\lambda},
\]
where $\chi_{\lambda}$ is the irreducible character of $S_n$ indexed by the partition $\lambda$ of $n$
and the nonnegative integer $m_{\lambda}$ is the multiplicity of $\chi_{\lambda}$ in $\chi_n(R,L)$.
By a result of Berele \cite{Berele} and one of the authors \cite{Drensky:1} the multiplicity $m_{\lambda}$, $\lambda=(\lambda_1,\ldots,\lambda_p)\vdash n$,
is the same as the multiplicity of the irreducible polynomial $\text{GL}_p(K)$-module $W_p(\lambda)$ in the $\text{GL}_p(K)$-module
\[
F_p(R,L)=K\langle X_p\rangle/(K\langle X_p\rangle\cap I(R,L))\cong\sum_{\lambda}m_{\lambda}W_p(\lambda),
\]
where $K\langle X_p\rangle=K\langle x_1,\ldots,x_p\rangle$, the general linear group $\text{GL}_p(K)$ acts canonically
on the vector space $KX_p$ with basis $X_p=\{x_1,\ldots,x_p\}$
and this action is extended diagonally on the whole algebra $K\langle X_p\rangle$.

Our Theorem~\ref{main theorem} gives  explicitly the cocharacter sequence $\chi_n(M_3(K),so_3(K))$, $n=0,1,2,\ldots$.
The proof is based on a combination of classical invariant theory and representation theory of the general linear group.
By standard arguments due to Regev \cite{Regev},
since $\dim(so_3(K))=3$, we work in the algebra $F_3(M_3(K),so_3(K))$ considered as a $\text{GL}_3(K)$-module
instead to work with $P_n(M_3(K),so_3(K))$ and representations of $S_n$.
Using classical results from invariant theory we give  upper bounds for the multiplicities $m_{\lambda}$,
$\lambda=(\lambda_1,\lambda_2,\lambda_3)$ depending on the parity of
the differences $\lambda_1-\lambda_2$, $\lambda_2-\lambda_3$.
Then with explicit constructions we show that these upper bounds are achieved.

\subsection{Invariant theory of $\text{\rm SO}_3(K)$}
\label{subsec:invariant theory of SO_3}

The general linear group $\text{GL}_d(K)$ acts on the space
$M_d(K)^{\oplus p}$ of
$p$-tuples  of $d\times d$ matrices by simultaneous conjugation:
\[
g\cdot (r_1,\ldots,r_p)=(gr_1g^{-1},\ldots,gr_pg^{-1}),\quad g\in \text{GL}_d(K),\quad r_1,\ldots,r_p\in M_d(K).
\]
The polynomial algebra corresponding to this action is in $pd^2$ variables,
\[
K[Z_p]=K[z_{ij}^{(k)}\mid 1\leq i,j\leq d;\quad k=1,\ldots,p].
\]
The action of $\text{GL}_d(K)$ is defined in terms of generic $d\times d$ matrices
\[
z_k=\left(\begin{matrix}
z_{11}^{(k)}&\ldots&z_{1d}^{(k)}\\
\vdots&\ddots&\vdots\\
z_{d1}^{(k)}&\ldots&z_{dd}^{(k)}\\
\end{matrix}\right),\quad k=1,\ldots,p.
\]
If
\[g^{-1}\left(z_{ij}^{(k)}\right)g=\left(w_{ij}^{(k)}\right),\quad g\in \text{GL}_d(K),\quad  k=1,\ldots,p,
\]
then under the action of $g$ the variable $z_{ij}^{(k)}$ goes to $w_{ij}^{(k)}$.

The algebra of invariants of the orthogonal group $\text{O}_d(K)\subset \text{GL}_d(K)$ is described by Sibirskii \cite{sibirskii} and Procesi
\cite[Theorem 7.1]{Procesi:1}:

\begin{theorem}\label{orthogonal trace invariants}
The algebra $K[Z_p]^{\text{\rm O}_d(K)}$ of invariants of the group $\text{\rm O}_d(K)$
acting by simultaneous conjugation on $p$ copies of $M_d(K)$ is generated by the traces
\[
\text{\rm tr}(u_{k_1}\cdots u_{k_n}),\quad 1\leq k_1,\ldots,k_n\leq p,
\]
where $u_{k_r}=z_{k_r}$ or $u_{k_r}=z_{k_r}'$, the transpose of $z_{k_r}$, $r=1,\ldots,n$.
\end{theorem}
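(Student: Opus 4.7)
The plan is to derive the theorem from Weyl's first fundamental theorem (FFT) for $\mathrm{O}_d(K)$, using the $\mathrm{O}_d$-invariant symmetric bilinear form on $V = K^d$ to identify the standard representation with its dual.

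By polarization and restitution in characteristic zero, it suffices to show that for every $N$, the space of \emph{multilinear} $\mathrm{O}_d(K)$-invariant functions $M_d(K)^{\oplus N} \to K$ is spanned by products of traces of the required form; restitution (by repeating variables) then exhibits every polynomial invariant on $M_d(K)^{\oplus p}$ as a polynomial in trace monomials. The multilinear invariants are the $\mathrm{O}_d$-invariants in $(M_d(K)^*)^{\otimes N}$. Using the trace form to identify $M_d(K)^* \cong M_d(K)$, and then $M_d(K) \cong V \otimes V^* \cong V \otimes V$ as $\mathrm{O}_d$-modules (the second step uses the form on $V$), this target becomes isomorphic to $V^{\otimes 2N}$, in which, for each $k = 1, \dots, N$, one tensor factor records the row index and one the column index of the matrix $z_k$.

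I would then invoke the multilinear form of Weyl's FFT for the orthogonal group: the space $(V^{\otimes 2N})^{\mathrm{O}_d(K)}$ is spanned by the $(2N-1)!!$ elementary invariants obtained by pairing up the $2N$ tensor factors and contracting each pair via the orthogonal form. Translating back, each perfect matching $\pi$ of the row/column indices of $z_1, \dots, z_N$ produces an $\mathrm{O}_d$-invariant multilinear function. The combinatorial key is that such a matching induces a $2$-regular multigraph on $\{1, \dots, N\}$ (two edges at each vertex, one for the row and one for the column index), and hence decomposes into disjoint cycles; each cycle of length $n$ evaluates to a single trace of a length-$n$ monomial in the $z_k$ and their transposes. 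Whether a transpose appears in position $r$ of the monomial is dictated by whether the two matching edges incident to $z_{k_r}$ hit its row index, its column index, or one of each. The invariant associated to $\pi$ is precisely the product of these cycle-traces.

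The main obstacle I expect is the last combinatorial step: writing down the cycle-to-trace dictionary cleanly and verifying that it accounts for every pattern of row/column contractions at each matrix (the four cases row-row, col-col, row-col, col-row at a vertex determining exactly when a transpose is forced). Everything else in the argument is formal. Once this dictionary is in place, the multilinear invariants are exhibited as monomial expressions in the trace generators, and restitution completes the proof. An alternative route that packages this combinatorics more abstractly is Procesi's general theorem on tensor invariants of classical groups via Brauer diagrams, which yields the same conclusion.
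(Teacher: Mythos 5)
Your argument is correct and is, in essence, the proof given in the source the paper cites for this statement: the paper itself offers no proof of Theorem~\ref{orthogonal trace invariants}, attributing it to Sibirskii and to Procesi, and Procesi's proof of his Theorem~7.1 proceeds exactly as you describe — polarization to multilinear invariants, the identification $M_d(K)\cong V\otimes V$ via the orthogonal form, Weyl's tensor form of the first fundamental theorem (invariants of $V^{\otimes 2N}$ spanned by complete contractions), and the cycle decomposition of the resulting $2$-regular multigraph translating matchings into products of traces of monomials in the $z_k$ and $z_k'$. The row/column contraction dictionary you flag as the remaining bookkeeping is indeed routine and closes the argument.
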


The generators of the algebra $K[Z_m]^{\text{SO}_d(K)}$ of invariants of $\text{SO}_d(K)$
are given by Aslaksen, Tan, and Zhu \cite[Theorem 3]{Aslaksen-Tan-Zhu}.

\begin{theorem}\label{special orthogonal trace invariants}
{\rm (i)} For $d$ odd the algebra $K[Z_p]^{\text{\rm SO}_d(K)}$ of $\text{\rm SO}_d(K)$-invariants
coincides with the algebra $K[Z_p]^{\text{\rm O}_d(K)}$ of $\text{\rm O}_d(K)$-invariants.

{\rm (ii)} For $d$ even $K[Z_p]^{\text{\rm SO}_d(K)}$ is generated by the generators of $K[Z_p]^{\text{\rm O}_d(K)}$
and the so called polarized Pfaffians.
\end{theorem}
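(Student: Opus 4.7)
For (i), observe that when $d$ is odd, $\det(-I)=(-1)^d=-1$, so $-I\in \mathrm{O}_d(K)\setminus \mathrm{SO}_d(K)$ and $\mathrm{O}_d(K)=\mathrm{SO}_d(K)\sqcup (-I)\mathrm{SO}_d(K)$. Since conjugation by $-I$ is the identity transformation of $M_d(K)^{\oplus p}$, the induced $\mathrm{O}_d(K)$- and $\mathrm{SO}_d(K)$-actions on $K[Z_p]$ coincide, so the two invariant rings are equal.

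For (ii), assume $d$ is even, so $-I\in \mathrm{SO}_d(K)$ and $\mathrm{O}_d(K)/\mathrm{SO}_d(K)\cong \mathbb{Z}/2\mathbb{Z}$, with the nontrivial character being $\det$. Because $\mathrm{char}(K)=0$, the Reynolds operator for this quotient produces an eigenspace decomposition
\[K[Z_p]^{\mathrm{SO}_d(K)}=K[Z_p]^{\mathrm{O}_d(K)}\oplus N,\]
where $N=\{f\in K[Z_p]^{\mathrm{SO}_d(K)}\mid g\cdot f=\det(g)f\text{ for all }g\in \mathrm{O}_d(K)\}$ is the $K[Z_p]^{\mathrm{O}_d(K)}$-module of sign-semi-invariants. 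The proof then reduces to showing that $N$ is generated, as such a module, by the polarized Pfaffians.

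Elements of $N$ can be built from the Pfaffian: for even $d$ and skew-symmetric $B\in M_d(K)$, the transformation law $\mathrm{Pf}(gBg^T)=\det(g)\mathrm{Pf}(B)$ holds. Given words $w_1,\ldots,w_{d/2}$ in the generic matrices $z_k$ and their transposes $z_k'$, the differences $w_i-w_i^T$ are skew-symmetric, and the multilinear coefficient of $\lambda_1\cdots\lambda_{d/2}$ in $\mathrm{Pf}\bigl(\sum_i\lambda_i(w_i-w_i^T)\bigr)$ lies in $N$; these are the polarized Pfaffians. To see they exhaust $N$ modulo $K[Z_p]^{\mathrm{O}_d(K)}$-multiples, one combines Aronhold polarization with Weyl's First Fundamental Theorem for $\mathrm{SO}_d(K)$: every $\mathrm{SO}_d(K)$-equivariant multilinear form on tensor powers of $K^d$ is a linear combination of contractions with the standard bilinear form $\delta_{ij}$ and the Levi--Civita tensor $\varepsilon_{i_1\ldots i_d}$. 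Via the Schur--Weyl translation that underlies Procesi's proof of Theorem~\ref{orthogonal trace invariants}, contractions with $\delta$ yield the trace invariants of $K[Z_p]^{\mathrm{O}_d(K)}$, while a single contraction with $\varepsilon$ produces a polarized Pfaffian.

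The main technical obstacle is the last conversion, where one must show that multiple $\varepsilon$-contractions can always be reduced to at most one, modulo $\mathrm{O}_d(K)$-trace invariants. This rests on the quadratic identity $\mathrm{Pf}(B)^2=\det(B)$ for skew $B$ (and its polarized multilinear versions): a product of two polarized Pfaffians is expressible in terms of trace invariants, so any element of $N$ needs at most one $\varepsilon$-tensor factor. Together with the polarization argument, this yields the generating set of \cite[Theorem 3]{Aslaksen-Tan-Zhu}.
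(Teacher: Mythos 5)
This theorem is not proved in the paper at all: it is quoted verbatim as \cite[Theorem 3]{Aslaksen-Tan-Zhu}, so there is no internal proof to compare your argument against. Judged on its own, your part (i) is the complete and standard argument: for $d$ odd, $-I$ has determinant $-1$, so $\mathrm{O}_d(K)=\mathrm{SO}_d(K)\sqcup(-I)\mathrm{SO}_d(K)$, and since conjugation by $-I$ is trivial the two groups induce the same action on $K[Z_p]$. Your part (ii) is a correct outline of the classical route (and essentially the route of the cited source): the $\mathbb{Z}/2\mathbb{Z}$-eigenspace decomposition $K[Z_p]^{\mathrm{SO}_d}=K[Z_p]^{\mathrm{O}_d}\oplus N$ is valid in characteristic zero, the polarized Pfaffians do lie in $N$ because $\mathrm{Pf}(gBg^{-1})=\det(g)\mathrm{Pf}(B)$ for $g\in\mathrm{O}_d(K)$ and the skew part $w-w^T$ of a word transforms by conjugation, and the reduction of several $\varepsilon$-contractions to at most one via $\varepsilon\otimes\varepsilon=\det(\delta)$ (equivalently $\mathrm{Pf}^2=\det$) is exactly the right mechanism. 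Be aware, though, that as written this is a plan rather than a proof: the substantive steps --- the tensor form of the first fundamental theorem for $\mathrm{SO}_d$, the identification $M_d(K)\cong V_d\otimes V_d$ via the form so that the conjugation action reduces to vector invariants, the Aronhold polarization/restitution passage between multilinear and general polynomial invariants, and the verification that a single $\varepsilon$-contraction of words yields precisely a polarized Pfaffian times traces --- are all invoked rather than carried out. None of these invocations is wrong, and filling them in reproduces the Aslaksen--Tan--Zhu proof, but they are where all the work lives.
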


The well-known generating system of the algebra of invariants $K[T_p]^{\text{SO}_3(K)}$ of the special orthogonal group $\text{SO}_3(K)$ acting
by simultaneous conjugation on $p$ copies of the Lie algebra $so_3(K)$ of $3\times 3$ skew-symmetric matrices
can be obtained as a consequence of  the special case $d=3$ of Theorems \ref{orthogonal trace invariants} and \ref{special orthogonal trace invariants}. 
For the rest of this section we assume $d=3$.  

\begin{corollary}\label{skew-symmetric invariants}
The algebra $K[T_p]^{SO_3(K)}$ is generated by the traces
\[
\text{\rm tr}(t_{k_1}\cdots t_{k_n}),\quad 1\leq k_1,\ldots,k_n\leq p.
\]
\end{corollary}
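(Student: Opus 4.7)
The plan is to deduce the corollary directly from the two theorems just quoted, by restricting from invariants of tuples of arbitrary matrices to invariants of tuples of skew-symmetric matrices. The point is that $so_3(K)^{\oplus p}$ is a closed $\mathrm{SO}_3(K)$-stable linear subspace of $M_3(K)^{\oplus p}$, so the restriction map of polynomial functions
\[
\rho\colon K[Z_p]\longrightarrow K[T_p],\qquad z_k\mapsto t_k
\]
is an $\mathrm{SO}_3(K)$-equivariant surjection. Since we are in characteristic zero, $\mathrm{SO}_3(K)$ is linearly reductive, so taking $\mathrm{SO}_3(K)$-invariants preserves surjectivity; hence $\rho$ restricts to a surjection
\[
K[Z_p]^{\mathrm{SO}_3(K)}\twoheadrightarrow K[T_p]^{\mathrm{SO}_3(K)}.
\]

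Next I would invoke Theorem~\ref{special orthogonal trace invariants}~(i) (using $d=3$ is odd) to identify $K[Z_p]^{\mathrm{SO}_3(K)}$ with $K[Z_p]^{\mathrm{O}_3(K)}$, and then Theorem~\ref{orthogonal trace invariants} to conclude that $K[Z_p]^{\mathrm{SO}_3(K)}$ is generated by the mixed traces $\mathrm{tr}(u_{k_1}\cdots u_{k_n})$ with $u_{k_r}\in\{z_{k_r},z_{k_r}'\}$. The images of these generators under $\rho$ therefore generate $K[T_p]^{\mathrm{SO}_3(K)}$.

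Finally, I would use the defining property of $so_3(K)$: each $t_k$ is skew-symmetric, so $\rho(z_k')=t_k'=-t_k=\rho(-z_k)$. Therefore
\[
\rho\bigl(\mathrm{tr}(u_{k_1}\cdots u_{k_n})\bigr)=\varepsilon\,\mathrm{tr}(t_{k_1}\cdots t_{k_n})
\]
with $\varepsilon\in\{\pm 1\}$ determined by the number of transposes appearing, which shows that the restricted generators are (up to sign) exactly the traces in the statement. This yields the desired generating set of $K[T_p]^{\mathrm{SO}_3(K)}$.

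There is no real obstacle here; the only subtle point is the surjectivity of $\rho$ on invariants, which is the standard consequence of reductivity of $\mathrm{SO}_3(K)$ in characteristic zero applied to the closed invariant embedding $so_3(K)^{\oplus p}\hookrightarrow M_3(K)^{\oplus p}$. Everything else is a direct substitution using $t_k'=-t_k$.
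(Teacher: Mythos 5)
Your proposal is correct and follows essentially the same route as the paper: restrict the generators of $K[Z_p]^{\mathrm{SO}_3(K)}$ (identified with $K[Z_p]^{\mathrm{O}_3(K)}$ since $d=3$ is odd) along the substitution $z_k\mapsto t_k$ and use $t_k'=-t_k$ to see each mixed trace lands on $\pm\mathrm{tr}(t_{k_1}\cdots t_{k_n})$. The only cosmetic difference is that you justify surjectivity on invariants via linear reductivity of $\mathrm{SO}_3(K)$, whereas the paper notes that $so_3(K)^{\oplus p}$ is an $\mathrm{SO}_3(K)$-module direct summand of $M_3(K)^{\oplus p}$; both are valid.
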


\begin{proof}
Since $so_3(K)^{\oplus p}$ is an $\mathrm{SO}_3(K)$-module direct summand of 
$M_3(K)^{\oplus p}$,  the substitution $z_k\mapsto t_k$, $k=1,\dots,p$ induces a $K$-algebra surjection $K[Z_p]^{\mathrm{SO}_3(K)}\to K[T_p]^{\mathrm{SO}_3(K)}$. Since $z_k'$ is mapped to $-t_k$, a generator 
$\text{\rm tr}(u_{k_1}\cdots u_{k_n})$ from Theorem~\ref{orthogonal trace invariants} is mapped to $\pm\text{\rm tr}(t_{k_1}\cdots t_{k_n})$. 
\end{proof}

In the sequel we shall refer to the algebra $K[T_p]^{SO_3(K)}$ generated by traces of products
of $3\times 3$ generic skew-symmetric matrices as the generic trace algebra.

\begin{remark} 
The algebra of $\mathrm{SL}_2(K)$-invariants under the adjoint action on $sl_2(K)^{\oplus p}$ is generated by traces of  monomials in the $2\times 2$ matrix components. 
\end{remark} 

Consider the space  $so_3(K)^{\oplus p}\oplus M_3(K)$, on which $\mathrm{SO}_3(K)$ acts by simultaneous conjugation,
and ${\mathrm{GL}}_p(K)$ acts on the right by
\[
(a_1,\dots,a_p,b)\cdot g=(\sum_{i=1}^3g_{i1}a_i,\dots,\sum_{i=1}^3g_{ip}a_i,b)
\]
for $g=(g_{ij})_{i,j=1}^p\in {\mathrm{GL}}_p(K)$.
The coordinate ring of $so_3(K)^{\oplus p}\oplus M_3(K)$ is $K[T_p,Z]$, where
$Z=\{z_{ij}\mid 1\le i,j\le 3\}$ is a set of  commuting indeterminates over $K[T_p]$.
We have the $K$-linear embedding
\begin{equation}\label{eq:embedding}
M_3(K[T_p])\to K[T_p,Z]^{(\mathbb{N}_0,\dots,\mathbb{N}_0,1)}
\text{ given by }f\mapsto \mathrm{tr}(fz),
\end{equation}
where $z=(z_{ij})_{i,j=1}^3$ is a generic $3\times 3$ matrix as in
Section~\ref{subsec:invariant theory of SO_3}, and $(\mathbb{N}_0,\dots,\mathbb{N}_0,1)$
in the exponent means that we take the component of $K[T_p,Z]$ consisting of the polynomial functions
that are linear on the summand $M_3(K)$ of $so_3(K)^{\oplus p}\oplus M_3(K)$.

\begin{proposition}\label{prop:embedding}
{\rm (i)} The $K$-subalgebra ${\mathcal{E}}_p$ of $M_3(K[T_p])$ is generated
by the generic skew-symmetric matrices $t_1,\dots,t_p$ and the scalar matrices 
$\mathrm{tr}(t_{k_1}\cdots t_{k_n})I$ ($n\ge 2$, $1\le k_1,\dots,k_n\le p$).  

{\rm (ii)} The map $f\mapsto \mathrm{tr}(fz)$ gives a ${\mathrm{GL}}_p(K)$-module isomorphism
\[
\iota:{\mathcal{E}}_p\stackrel{\cong}{\longrightarrow} (K[T_p,Z]^{\mathrm{SO}_3(K)})^{(\mathbb{N}_0,\dots,\mathbb{N}_0,1)}.
\]
\end{proposition}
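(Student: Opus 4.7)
The plan is to prove (ii) first and then deduce (i) from it together with the first fundamental theorem for $\mathrm{SO}_3(K)$.  For (ii), the map $\iota\colon f\mapsto \mathrm{tr}(fz)$ is plainly $K$-linear; writing $f=(f_{ij})_{i,j=1}^3$ gives $\iota(f)=\sum_{i,j=1}^3 f_{ij}z_{ji}$, so the inverse (reading off $f_{ij}$ as the coefficient of $z_{ji}$) is a $K$-linear bijection onto the entire multidegree-one-in-$Z$ component of $K[T_p,Z]$.  Since ${\mathrm{GL}}_p(K)$ acts only on the $T$-variables, $\iota$ is automatically ${\mathrm{GL}}_p(K)$-equivariant.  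A short direct computation then gives the intertwining identity $g\cdot\mathrm{tr}(fz)=\mathrm{tr}((g\cdot f)\,z)$ for $g\in\mathrm{SO}_3(K)$, where on the right $g$ acts on $M_3(K[T_p])$ by combining conjugation on the matrix factor with the contragredient action on $K[T_p]$, and on the left $g$ acts on $K[T_p,Z]$ via the contragredient of the diagonal conjugation action on $so_3(K)^{\oplus p}\oplus M_3(K)$.  Since ${\mathcal{E}}_p$ is by construction the fixed subspace of $M_3(K[T_p])$ under this action, restricting $\iota$ yields the claimed isomorphism of ${\mathrm{GL}}_p(K)$-modules.

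For (i), let ${\mathcal{A}}_p$ be the subalgebra of $M_3(K[T_p])$ generated by $t_1,\dots,t_p$ and the scalar matrices $\mathrm{tr}(t_{k_1}\cdots t_{k_n})I$.  The inclusion ${\mathcal{A}}_p\subseteq{\mathcal{E}}_p$ is immediate, as each $t_k$ is $\mathrm{SO}_3(K)$-equivariant and each trace of a monomial is $\mathrm{SO}_3(K)$-invariant.  For the converse, every element of ${\mathcal{A}}_p$ is a $K$-linear combination of products $t_{k_1}\cdots t_{k_n}\cdot Q(T)\cdot I$ with $Q\in K[T_p]^{\mathrm{SO}_3(K)}$, and $\iota$ sends this to $Q(T)\,\mathrm{tr}(t_{k_1}\cdots t_{k_n}z)$; so by (ii) it suffices to show that every element of $(K[T_p,Z]^{\mathrm{SO}_3(K)})^{(\mathbb{N}_0,\dots,\mathbb{N}_0,1)}$ has this form.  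Applying Theorem~\ref{orthogonal trace invariants} and Theorem~\ref{special orthogonal trace invariants}(i) to the $\mathrm{SO}_3(K)$-stable direct summand $so_3(K)^{\oplus p}\oplus M_3(K)$ of $M_3(K)^{\oplus(p+1)}$, and invoking the surjectivity of restriction of invariants to such a summand, shows that $K[T_p,Z]^{\mathrm{SO}_3(K)}$ is generated by traces $\mathrm{tr}(u_1\cdots u_n)$ with $u_i\in\{t_k,-t_k,z,z'\}$.  The multidegree $(\mathbb{N}_0,\dots,\mathbb{N}_0,1)$ component then consists of products of one generator containing a single $z$ or $z'$ and further generators involving only the $t_k$; cyclicity of the trace together with $t_k'=-t_k$ and $\mathrm{tr}(Mz')=\mathrm{tr}(M'z)$ reduces the $z$-linear generator, up to sign, to $\mathrm{tr}(Mz)=\iota(M)$ for a monomial $M$ in the $t_k$, while the remaining factors are polynomials in trace-monomials by Corollary~\ref{skew-symmetric invariants}.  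Consequently every invariant in multidegree $(\mathbb{N}_0,\dots,\mathbb{N}_0,1)$ equals $\iota(f)$ for some $f\in{\mathcal{A}}_p$, and (ii) forces ${\mathcal{E}}_p={\mathcal{A}}_p$.

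The main obstacle is the combinatorial step in the second paragraph: one must verify that after cyclic rearrangement inside each trace-word and elimination of primes via $t_k'=-t_k$, every invariant in multidegree $(\mathbb{N}_0,\dots,\mathbb{N}_0,1)$ really does factor as a $z$-free trace-monomial polynomial times $\iota(M)$ for a monomial $M$ in the $t_k$.  Once this is in hand, the $\mathrm{SO}_3(K)$-equivariance of $\iota$ from (ii) is exactly what converts the classical invariant-theoretic description into the explicit algebraic generating set promised in (i).
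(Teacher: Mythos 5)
Your argument is correct and takes essentially the same route as the paper: both rest on the trace pairing $\iota(f)=\mathrm{tr}(fz)$ identifying $M_3(K[T_p])$ with the $z$-linear part of $K[T_p,Z]$, on the surjectivity of restriction of the $\mathrm{O}_3(K)$-invariants of $(p+1)$-tuples of generic matrices to the summand $so_3(K)^{\oplus p}\oplus M_3(K)$ (Theorems~\ref{orthogonal trace invariants} and \ref{special orthogonal trace invariants}), and on the reduction $\mathrm{tr}(t_{k_1}\cdots t_{k_n}z')=(-1)^n\mathrm{tr}(t_{k_n}\cdots t_{k_1}z)$ of the $z$-linear trace generators. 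The only organizational difference is that you prove (ii) first, directly as the restriction of an equivariant linear bijection to fixed subspaces, whereas the paper derives (i) and (ii) simultaneously from the observation that $\iota$ already maps the subalgebra generated by the $t_k$ and the trace scalars onto $(K[T_p,Z]^{\mathrm{SO}_3(K)})^{(\mathbb{N}_0,\dots,\mathbb{N}_0,1)}$.
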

\begin{proof}
By standard properties of the trace, the restriction of the embedding \eqref{eq:embedding}
maps ${\mathcal{E}}_p$ into $(K[T_p,Z]^{\mathrm{SO}_3(K)})^{(\mathbb{N}_0,\dots,\mathbb{N}_0,1)}$.
On the other hand, ${\mathcal{E}}_p$ contains the $K$-subalgebra generated by $t_1,\dots,t_p$, $\mathrm{tr}(t_{k_1}\dots t_{k_n})I$ $(1\le k_1,\dots,k_n\le p$), 
and the images of the elements of this subalgebra already exhaust
$(K[T_p,Z]^{\mathrm{SO}_3(K)})^{(\mathbb{N}_0,\dots,\mathbb{N}_0,1)}$ (and hence 
both (i) and (ii) hold): indeed, 
similarly to the proof of Corollary~\ref{skew-symmetric invariants}, the specialization 
$z_k\mapsto t_k$ ($k=1,\dots,p$), $z_{p+1}\mapsto z$ maps the generators of 
$K[Z_{p+1}]^{\mathrm{SO}_3(K)}$ given in Theorem~\ref{orthogonal trace invariants} to generators of $K[T_p,Z]^{\mathrm{SO}_3(K)}$.  
If such a generator is linear in $z$, then up to sign, it is of the form 
$\mathrm{tr}(t_{k_1}\cdots t_{k_n}z)$,  since  a matrix and its transpose have equal trace, therefore 
$\mathrm{tr}(t_{k_1}\cdots t_{k_n}z')=\mathrm{tr}(z''t_{k_n}'\cdots t_{k_1}')=(-1)^n\mathrm{tr}(t_{k_n}\cdots t_{k_1}z)$.  
Taking into account Corollary~\ref{skew-symmetric invariants} we conclude that the 
above subalgebra of ${\mathcal{E}}_p$ is mapped by $\iota$ onto 
$(K[T_p,Z]^{\mathrm{SO}_3(K)})^{(\mathbb{N}_0,\dots,\mathbb{N}_0,1)}$. 
\end{proof}

\begin{corollary} \label{cor:weyl}
For $p\ge 3$ we have ${\mathcal{E}}_p=\langle {\mathcal{E}}_3\rangle_{{\mathrm{GL}}_p(K)}$.
\end{corollary}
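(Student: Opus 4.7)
The plan is to combine the Weyl polarization principle with the fact that $\dim so_3(K)=3$: I would decompose ${\mathcal{E}}_p$ into irreducible ${\mathrm{GL}}_p(K)$-summands and show that each summand has a highest weight vector already lying in ${\mathcal{E}}_3\subseteq {\mathcal{E}}_p$.

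First, I would verify that every irreducible ${\mathrm{GL}}_p(K)$-constituent $W_p(\lambda)$ of ${\mathcal{E}}_p$ satisfies $\ell(\lambda)\le 3$. This follows from a standard Cauchy-type argument: as a ${\mathrm{GL}}_p(K)$-module, $so_3(K)^{\oplus p}\cong so_3(K)\otimes_K K^p$ with ${\mathrm{GL}}_p(K)$ acting on the second factor, so the symmetric algebra $K[T_p]=S\!\left((so_3(K)\otimes_K K^p)^*\right)$ decomposes into ${\mathrm{GL}}_p(K)$-irreducibles only of the form $W_p(\lambda)$ with $\ell(\lambda)\le\min(\dim so_3(K),p)=3$. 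The same length bound persists for the ${\mathrm{GL}}_p(K)$-submodule ${\mathcal{E}}_p\subseteq M_3(K[T_p])=K[T_p]\otimes_K M_3(K)$, since ${\mathrm{GL}}_p(K)$ acts trivially on the $M_3(K)$ factor.

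Next, using complete reducibility of rational ${\mathrm{GL}}_p(K)$-modules in characteristic zero, I would decompose ${\mathcal{E}}_p$ as a direct sum of irreducible summands $V\cong W_p(\lambda)$ and, for each, pick a nonzero highest weight vector $v\in V$ of weight $(\lambda_1,\lambda_2,\lambda_3,0,\dots,0)$ relative to the diagonal torus. From the explicit action $\mathrm{diag}(a_1,\dots,a_p)\cdot t^{(k)}_{ij}=a_k t^{(k)}_{ij}$ recorded in Section~\ref{sec:intro}, any weight vector of this weight in $M_3(K[T_p])$ has matrix entries lying in the subring $K[t^{(k)}_{ij}\mid 1\le i,j\le 3,\ 1\le k\le 3]$. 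Hence $v\in {\mathcal{E}}_3$, viewed inside ${\mathcal{E}}_p$ as the subalgebra of maps factoring through the projection $so_3(K)^{\oplus p}\to so_3(K)^{\oplus 3}$ onto the first three components. Since ${\mathrm{GL}}_p(K)\cdot v$ spans $V$, this gives $V\subseteq \langle {\mathcal{E}}_3\rangle_{{\mathrm{GL}}_p(K)}$, and summing over summands yields ${\mathcal{E}}_p\subseteq \langle {\mathcal{E}}_3\rangle_{{\mathrm{GL}}_p(K)}$; the reverse containment is obvious.

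The only substantive step is the length bound in the first paragraph, which is a classical consequence of the $\mathrm{GL}\times \mathrm{GL}$-version of the Cauchy formula; once it is granted, the remainder is a direct weight computation together with the cyclicity of an irreducible highest weight module at its highest weight vector, and I expect no real difficulty.
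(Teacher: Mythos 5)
Your proof is correct, and it reaches the conclusion by a genuinely different (though closely related) route. The paper simply invokes Weyl's theorem on polarizations as a black box: since $\dim_K so_3(K)=3$, one has $K[T_p,Z]=\langle K[T_3,Z]\rangle_{{\mathrm{GL}}_p(K)}$, and the statement is then transported to ${\mathcal{E}}_p$ through the embedding $\iota$ of Proposition~\ref{prop:embedding}~(ii). You instead work directly inside $M_3(K[T_p])$ and in effect re-prove the relevant case of the polarization theorem: the Cauchy formula applied to $S\bigl((so_3(K)\otimes K^p)^*\bigr)$ shows that only constituents $W_p(\lambda)$ with $\ell(\lambda)\le 3$ occur, a highest weight vector of such a weight is multihomogeneous of degree $0$ in $t_4,\dots,t_p$ and hence lies in ${\mathcal{E}}_3$ (your identification of $M_3(K[T_3])\cap{\mathcal{E}}_p$ with ${\mathcal{E}}_3$ via the $\mathrm{SO}_3(K)$-equivariant projection is the one small point that needed saying, and you said it), and irreducibility finishes the argument. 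What your version buys is self-containedness --- it needs neither Weyl's theorem as an external citation nor the auxiliary model $(K[T_p,Z]^{\mathrm{SO}_3(K)})^{(\mathbb{N}_0,\dots,\mathbb{N}_0,1)}$ --- and it records explicitly the length bound $\ell(\lambda)\le 3$ on the constituents of ${\mathcal{E}}_p$, which the paper also relies on elsewhere. What the paper's version buys is brevity, given that Proposition~\ref{prop:embedding} is already in place and is reused heavily in Sections~\ref{sec:main result} and \ref{sec:covariants}. Both arguments rest on the same underlying fact that $\dim so_3(K)=3$, so I would call this the same principle implemented by a different mechanism rather than a wholly different proof.
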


\begin{proof}
Since $\dim_K(so_3(K))=3$, by Weyl's Theorem on polarizations (derived from Capelli's identities in  \cite{Weyl}) we have
$K[T_p,Z]=\langle K[T_3,Z]\rangle_{{\mathrm{GL}}_p(K)}$, hence
\[
(K[T_p,Z]^{\mathrm{SO}_3(K)})^{(\mathbb{N}_0,\dots,\mathbb{N}_0,1)}=
\langle (K[T_3,Z]^{\mathrm{SO}_3(K)})^{(\mathbb{N}_0,\mathbb{N}_0,\mathbb{N}_0,1)}
\rangle_{{\mathrm{GL}}_p(K)}.
\]
So the statement follows by the isomorphism $\iota$ in Proposition~\ref{prop:embedding}.
\end{proof}

We need some facts on $K[T_p]^{\mathrm{SO}_3(K)}$ contained in the theorems  on  the vector invariants of the special orthogonal group, that we recall now. 
Let $V_d$ be the $d$-dimensional $K$-vector space with basis $\{v_1,\ldots,v_d\}$ with the canonical action
of the group $\text{GL}(V_d)$ identified in the usual way with $\text{GL}_d(K)$. The action of $\text{GL}_d(K)$ on $V_d$ induces an action
on the algebra $K[X_d]=K[x_1,\ldots,x_d]$ of polynomial functions on $V_d$
(here $x_1,\dots,x_d$ is the dual basis in $V_d^*$ to the basis chosen in $V_d$).
If
\[
v=\alpha_1v_1+\cdots+\alpha_dv_d\in V_d,\quad f=f(X_d)\in K[X_d], \quad g\in \text{GL}_d(K),
\]
then
\[
f(v)=f(\alpha_1,\ldots,\alpha_d) \text{ and }(g(f))(v)=f(g^{-1}(v)).
\]
For any subgroup $G$ of $\text{GL}_d(K)$ the algebra $K[X_d]^G$ of $G$-invariants
consists of all $f(X_d)\in K[X_d]$ with the property $g(f)=f$ for all $g\in G$.

We equip the vector space $V_d$ with a nondegenerate symmetric bilinear form. If
\[
v'=\alpha_1v_1+\cdots+\alpha_dv_d,\quad v''=\beta_1v_1+\cdots+\beta_dv_d,
\]
then
\[
\langle v',v''\rangle=\alpha_1\beta_1+\cdots+\alpha_d\beta_d.
\]
The special orthogonal group $\text{SO}_d(K)$ acts canonically on the vector space $V_d$ and
consists of all matrices with determinant equal to 1 which preserve the symmetric bilinear form.
The action of $\text{SO}_d(K)$ can be extended to the direct sum $V_d^{\oplus p}$ of $p$ copies of $V_d$.
Write $\{v_{i1},\ldots,v_{id}\}$ for the basis of the $i$th  direct summand of $V_d^{\oplus p}$
corresponding to the chosen basis $\{v_1,\dots,v_n\}$
of $V_d$,
and let $y_{ik}$ be the polynomial (in fact linear) function
which sends the vector $v_{ik}$ to 1 and to 0 all other vectors of the fixed basis of $V_d^{\oplus p}$.
Set $y_i=(y_{i1},\ldots,y_{id})$, $i=1,\ldots,p$,
and consider the scalar products
\[
\langle y_i,y_j\rangle=y_{i1}y_{j1}+\cdots+y_{id}y_{jd},\quad 1\leq i,j\leq p,
\]
the determinant
\[
\Delta_d(y_{j_1},\ldots,y_{j_d})=\det(y_{j_1},\ldots,y_{j_d})=
\left\vert\begin{matrix}
y_{1j_1}&y_{1j_2}&\ldots&y_{1j_d}\\
y_{2j_1}&y_{2j_2}&\ldots&y_{2j_d}\\
\vdots&\vdots&\ddots&\vdots\\
y_{dj_1}&y_{dj_2}&\ldots&y_{dj_d}\\
\end{matrix}\right\vert,
\]
$1\leq j_1<\cdots<j_d\leq p$, and the Gram determinant
\[
\Gamma_k(y_{i_1},\ldots,y_{i_k}\mid y_{j_1},\ldots,y_{j_k})=
\det(\langle y_{i_r},y_{j_s}\rangle)=
\left\vert\begin{matrix}
\langle y_{i_1},y_{j_1}\rangle&\ldots&\langle y_{i_1},y_{j_k}\rangle\\
\vdots&\ddots&\vdots\\
\langle y_{i_k},y_{j_1}\rangle&\ldots&\langle y_{i_k},y_{j_k}\rangle\\
\end{matrix}\right\vert,
\]
$1\leq i_1<\cdots<i_k\leq p$, $1\leq j_1<\cdots<j_k\leq p$.

The following classical theorems, see, e.g., \cite[Theorems 2.9.A and 2.17.A]{Weyl},
describe the generating set and the defining relations of the algebra
\[
K[Y_{pd}]^{\text{SO}_d(K)}=K[y_{ik}\mid i=1,\ldots,p; \ k=1,\ldots,d]^{\text{SO}_d(K)}
\]
of $\text{SO}_d(K)$-invariants of $V_d^{\oplus p}$.

\begin{theorem}[First fundamental theorem for the invariants of $\text{\rm SO}_d(K)$]\label{first fundamental theorem}
{\rm (i)} The algebra $K[Y_{pd}]^{\text{\rm SO}_d(K)}$ is generated by the scalar products
$\langle y_i,y_j\rangle$, $1\leq i,j\leq p$,
and by the determinants $\Delta_d(y_{j_1},\ldots,y_{j_d})$, $1\leq j_1<\cdots<j_d\leq p$.

{\rm (ii)} The elements of $K[Y_{pd}]^{\text{\rm SO}_d(K)}$ are linear combinations of products
\[
\langle y_{i_1},y_{j_1}\rangle\cdots\langle y_{i_n},y_{j_n}\rangle
\text{ and }\Delta_d(y_{k_1},\ldots,y_{k_d})\langle y_{i_1},y_{j_1}\rangle\cdots\langle y_{i_n},y_{j_n}\rangle,
\]
\[
1\leq i_r,j_r\leq p, \quad r=1,\ldots,n,\quad 1\leq k_1<\cdots<k_d\leq p.
\]
\end{theorem}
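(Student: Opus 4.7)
The plan is to establish the statement in two stages: first the analogous First Fundamental Theorem for the full orthogonal group $\text{O}_d(K)$, for which the scalar products $\langle y_i,y_j\rangle$ alone already generate the invariants, and then bridge to $\text{SO}_d(K)$ by isolating the sign character of the quotient $\text{O}_d(K)/\text{SO}_d(K)$. For part (ii) the extra ingredient is a Gram/Pl\"ucker-type identity that collapses any product of two $\Delta_d$'s into a polynomial in the scalar products.

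For $\text{O}_d$, I would first pass to multilinear invariants. Since $\mathrm{char}(K)=0$, Weyl's polarization/restitution argument reduces the question to describing, for each $n$, the multilinear $\text{O}_d$-invariants on $V_d^{\oplus n}$, i.e.\ the invariants in $(V_d^*)^{\otimes n}$. Using the nondegenerate form to identify $V_d^*\cong V_d$ as $\text{O}_d$-modules, these become $(V_d^{\otimes n})^{\text{O}_d}$. By Schur--Weyl--Brauer duality (equivalently, by the geometric statement that a generic $n$-tuple of vectors is determined up to the $\text{O}_d$-action by its Gram matrix, combined with a dimension count), this space is spanned by the iterated pairings coming from the form; restituting back, these become products $\prod_k\langle y_{i_k},y_{j_k}\rangle$. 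This gives the $\text{O}_d$-version of~(i).

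To pass from $\text{O}_d$ to $\text{SO}_d$, choose any reflection $\sigma\in\text{O}_d(K)$, so that $\text{O}_d(K)=\text{SO}_d(K)\sqcup\sigma\cdot\text{SO}_d(K)$, and use the decomposition
\[
K[Y_{pd}]^{\text{SO}_d(K)}=K[Y_{pd}]^{\text{O}_d(K)}\oplus A^{-},
\]
where $A^{-}$ is the $\text{O}_d$-isotypic component for the sign character. The $d\times d$ minors $\Delta_d(y_{j_1},\dots,y_{j_d})$ visibly lie in $A^{-}$; conversely, any $f\in A^{-}$ vanishes on the locus where $y_{j_1},\dots,y_{j_d}$ are linearly dependent for every choice of indices, and a standard divisibility argument together with the $\text{O}_d$-case shows that $f$ lies in the $K[Y_{pd}]^{\text{O}_d(K)}$-submodule generated by the $\Delta_d(y_{j_1},\dots,y_{j_d})$. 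This proves~(i). For~(ii), once (i) is in hand, every $\text{SO}_d$-invariant is a polynomial in the scalar products and the $\Delta_d$'s; one then invokes the identity
\[
\Delta_d(y_{j_1},\dots,y_{j_d})\cdot\Delta_d(y_{k_1},\dots,y_{k_d})=\det\bigl(\langle y_{j_r},y_{k_s}\rangle\bigr)_{r,s=1}^d,
\]
which is immediate from $\det(A)\det(B)=\det(A^{T}B)$ applied to the $d\times d$ matrices whose columns are the $y_{j_r}$ and $y_{k_s}$, to rewrite every product of two or more $\Delta_d$'s as a polynomial in the scalar products. The remaining spanning set is precisely the one in~(ii).

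The hard part will be the multilinear classification for $\text{O}_d$ --- equivalently, the surjectivity of the Brauer algebra onto $\text{End}_{\text{O}_d}(V_d^{\otimes n})$, or the geometric Gram-matrix statement for generic tuples. Everything else is standard polarization/restitution, the $\mathbb{Z}/2$-coset decomposition for $\text{O}_d/\text{SO}_d$ in characteristic zero, or the Cauchy--Binet determinantal identity.
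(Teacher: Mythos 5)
The paper does not prove this theorem at all: it is quoted as a classical result with a citation to Weyl (Theorems 2.9.A and 2.17.A of \emph{The Classical Groups}), so there is no in-paper argument to compare against. Your outline is a correct sketch of the standard classical proof, and it is essentially the argument found in the literature: polarization/restitution to reduce the $\mathrm{O}_d$-case to multilinear invariants, the identification of $(V_d^{\otimes 2n})^{\mathrm{O}_d}$ with the span of complete pairings (Brauer duality, or Weyl's own Capelli-identity reduction to $p\le d$), the $\mathbb{Z}/2$-isotypic decomposition $K[Y_{pd}]^{\mathrm{SO}_d}=K[Y_{pd}]^{\mathrm{O}_d}\oplus A^-$, and the Cauchy--Binet identity $\Delta_d(y_{j_1},\dots,y_{j_d})\Delta_d(y_{k_1},\dots,y_{k_d})=\det\bigl(\langle y_{j_r},y_{k_s}\rangle\bigr)$ to reduce to at most one determinant factor, which yields (ii). Two places where your sketch leans on nontrivial known facts deserve explicit acknowledgment: the "standard divisibility argument" for $A^-$ requires that the ideal of the rank-$<d$ locus of a $d\times p$ matrix is generated by the $d\times d$ minors (primeness/radicality of determinantal ideals), and the vanishing of $f\in A^-$ on that locus uses a reflection fixing the tuple, which exists only when the spanned hyperplane is nondegenerate, so a Zariski-density argument is needed to cover the degenerate configurations. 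With those points granted, the proof is complete and is the same route as the cited source.
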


\begin{theorem}[Second fundamental theorem for the invariants of $\text{\rm SO}_d(K)$]\label{second fundamental theorem}
The defining relations of the algebra $K[Y_{pd}]^{\text{\rm SO}_d(K)}$ consist of
\[
\Gamma_{d+1}(y_{i_0},y_{i_1},\ldots,y_{i_d}\mid y_{j_0},y_{j_1},\ldots,y_{j_d})=0,
\]
\[
1\leq i_0<i_1<\cdots <i_d\leq p,\quad 1\leq j_0<j_1<\cdots <j_d\leq p,
\]
\[
\Delta_d(y_{i_1},\ldots,y_{i_d})\Delta_d(y_{j_1},\ldots,y_{j_d})-
\Gamma_d(y_{i_1},\ldots,y_{i_d}\mid y_{j_1},\ldots,y_{j_d})=0,
\]
\[
1\leq i_1<\cdots <i_d\leq p,\quad 1\leq j_1<\cdots <j_d\leq p,
\]
\[
\sum_{r=0}^d(-1)^r\langle y_i,y_{j_r}\rangle
\Delta_d(y_{j_0},\ldots,\hat y_{j_r},\ldots,y_{j_d})=0,
\]
\[
1\leq i\leq p,\quad 1\leq j_0<j_1<\cdots<j_d\leq p,
\]
where $\hat y_{j_r}$ means that $y_{j_r}$ does not participate in the expression.
\end{theorem}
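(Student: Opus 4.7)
The plan is (a) to check that each of the three families actually lies in the kernel of the presentation coming from Theorem~\ref{first fundamental theorem}, and then (b) to show the ideal they generate exhausts all polynomial relations among those generators.

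Step (a) is elementary linear algebra. The Gram relation $\Gamma_{d+1}=0$ holds because the Gram matrix of any $d+1$ vectors in $V_d$ has rank at most $d$. The identity $\Delta_d\cdot\Delta_d=\Gamma_d$ is the Cauchy--Binet formula $\det(A)\det(B)=\det(A^{\top}B)$ applied to the $d\times d$ matrices whose columns are the two given $d$-tuples of vectors. The Laplace-type relation of family 3 comes from the following observation: the $(d+1)\times(d+1)$ matrix whose first row is $(\langle y_i,y_{j_0}\rangle,\ldots,\langle y_i,y_{j_d}\rangle)$ and whose remaining rows are the coordinate rows of $y_{j_0},\ldots,y_{j_d}$ has first row equal to $\sum_{k}y_{i,k}$ times the $(k+1)$-st row, so its determinant vanishes, and cofactor expansion along the first row reproduces exactly the third relation.

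For step (b), one first uses family 2 to reduce any monomial in the generators to one involving at most a single factor $\Delta_d(y_J)$, so that modulo the ideal $\mathcal{I}$ generated by the three families, every class can be written as $A+\sum_J B_J\Delta_d(y_J)$ with $A$ and the $B_J$ polynomials in the scalar products $\langle y_i,y_j\rangle$. Since $\mathrm{O}_d/\mathrm{SO}_d\cong\mathbb{Z}/2$, the algebra $K[Y_{pd}]^{\mathrm{SO}_d}$ decomposes as a $K[Y_{pd}]^{\mathrm{O}_d}$-module into the invariant part plus the sign eigenspace under an outer reflection, and the sign part is generated by the $\Delta_d(y_J)$. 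Hence it suffices to (i) identify the relations among the scalar products modulo $\mathcal{I}$ with those of $K[Y_{pd}]^{\mathrm{O}_d}$ (whose Second Fundamental Theorem says they are generated by the family-1 Gram relations), and (ii) show that all $K[Y_{pd}]^{\mathrm{O}_d}$-linear syzygies among the $\Delta_d(y_J)$ are consequences of family 3.

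The main obstacle is (b)(ii). A clean way to deal with it is a Pl\"ucker-style straightening algorithm: use family 3 to rewrite any $B_J\Delta_d(y_J)$ so that the $d$-tuple $J$ is ``standard'' with respect to a chosen order, producing in this way an explicit $K$-spanning set of the quotient indexed by pairs of semistandard Young tableaux, which one then matches with a known basis of $K[Y_{pd}]^{\mathrm{SO}_d}$. An alternative is to localize at a single $\Delta_d(y_{j_1},\ldots,y_{j_d})$: there $\Delta_d$ becomes invertible, families 1 and 2 reduce the quotient to a localization of $K[Y_{pd}]^{\mathrm{O}_d}$, and one globalizes using that $K[Y_{pd}]^{\mathrm{SO}_d}$ is a normal integral domain of the expected Krull dimension.
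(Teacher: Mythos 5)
The paper offers no proof of this statement: it is quoted as a classical theorem with the reference \cite[Theorems 2.9.A and 2.17.A]{Weyl}, so there is no in-paper argument to compare yours with, and I can only assess the proposal on its own terms. Your step (a) is correct and complete: the three families of relations do hold, for exactly the reasons you give (rank of the Gram matrix of $d+1$ vectors in $V_d$, Cauchy--Binet, and cofactor expansion of a singular $(d+1)\times(d+1)$ matrix along its first row). The architecture of step (b) is also the standard one: split $K[Y_{pd}]^{\mathrm{SO}_d(K)}$ into $K[Y_{pd}]^{\mathrm{O}_d(K)}$ plus the sign eigenspace of an orthogonal reflection, use family 2 to put every element in the form $A+\sum_J B_J\Delta_d(y_J)$, invoke the second fundamental theorem for $\mathrm{O}_d(K)$ for the summand $A$, and reduce the rest to the $K[Y_{pd}]^{\mathrm{O}_d(K)}$-syzygies of the $\Delta_d(y_J)$.

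The genuine gap is (b)(ii), which you name but do not prove, and which is the entire content of the theorem beyond the $\mathrm{O}_d$ case. Neither of your two suggested routes closes it as stated. The ``Pl\"ucker-style straightening'' route requires you to actually run a rewriting procedure driven by family 3 --- note these are \emph{linear} syzygies of the $\Delta$'s with scalar-product coefficients, not quadratic Pl\"ucker relations, so the usual straightening law does not apply verbatim --- and, in addition, to have an independently established standard-monomial basis of $K[Y_{pd}]^{\mathrm{SO}_d(K)}$ to match the resulting spanning set against; that basis is itself a theorem of the same depth as the one being proved. The localization route is logically incomplete: inverting a single $\Delta_d(y_{j_1},\dots,y_{j_d})$ only identifies the quotient $K[\text{symbols}]/\mathcal{I}$ with the invariant ring over the open locus where that minor is nonzero, and the normality and Krull dimension of $K[Y_{pd}]^{\mathrm{SO}_d(K)}$ tell you nothing a priori about the abstract quotient $K[\text{symbols}]/\mathcal{I}$; to globalize you would have to show that $\mathcal{I}$ is prime, or at least that the quotient has no components or torsion supported where all the relevant $\Delta$'s vanish, which is precisely what is at stake. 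So the proposal is a sound plan whose decisive step is missing; one must either cite Weyl, as the paper does, or carry out one of these arguments in full.
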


In \cite{Domokos-Drensky} we found a Gr\"obner basis of the ideal of defining relations
of the algebra $K[Y_{pd}]^{\text{\rm SO}_d(K)}$.

Let ${\mathbb N}_0=\{0,1,2,\ldots\}$. We define an ${\mathbb N}_0^p$-grading on the polynomial algebras $K[Y_{p3}]$, $K[T_p]$ 
and on the algebra ${\mathcal{F}}_p$ assuming that the variables $y_{kj}$, $t_{ij}^{(k)}$ and the matrix $t_k$ are of degree $(0,\ldots,0,1,0,\ldots,0)$
(the $k$th coordinate is equal to 1 and all other coordinates are equal to 0). The generic trace algebra is an ${\mathbb N}_0^p$-graded subalgebra of $K[T_p]$. 

\begin{proposition}\label{isomorphic algebras of invariants}
The algebras $K[Y_{p3}]^{\text{\rm SO}_3(K)}$ and $K[T_p]^{\text{\rm SO}_3(K)}$
are isomorphic as ${\mathbb N}_0^p$-graded algebras.
\end{proposition}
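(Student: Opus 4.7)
The plan is to exhibit an explicit $\mathrm{SO}_3(K)$-equivariant linear isomorphism between $so_3(K)$ (with the adjoint action) and the standard representation $V_3$, take its $p$-fold direct sum, and dualize to coordinate rings. Since the isomorphism respects the direct summands, it will automatically preserve the $\mathbb{N}_0^p$-grading.

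First I would construct the \emph{hat map}
\[
\varphi: so_3(K) \to V_3, \qquad \varphi\begin{pmatrix} 0 & a & b \\ -a & 0 & c \\ -b & -c & 0 \end{pmatrix} = (-c,\, b,\, -a),
\]
a $K$-linear bijection characterized by the identity $\varphi(A)\times w = Aw$ for all $w\in K^3$, where $\times$ denotes the cross product on $K^3$. The equivariance of $\varphi$ with respect to the adjoint action of $\mathrm{SO}_3(K)$ on $so_3(K)$ and the standard action on $V_3$ follows from the fact that every $g\in\mathrm{SO}_3(K)$ preserves the cross product, i.e.\ $g(u\times v)=(gu)\times(gv)$ (this is where $\det(g)=1$ enters, as opposed to $g\in\mathrm{O}_3(K)$). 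Indeed, for any $w\in K^3$ one obtains
\[
gAg^{-1}w = g\bigl(\varphi(A)\times g^{-1}w\bigr) = \bigl(g\varphi(A)\bigr)\times w,
\]
so $\varphi(gAg^{-1})=g\varphi(A)$.

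Next, the $p$-fold direct sum $\Phi:=\varphi^{\oplus p}: so_3(K)^{\oplus p}\to V_3^{\oplus p}$ is an $\mathrm{SO}_3(K)$-equivariant linear isomorphism that maps the $k$th summand isomorphically onto the $k$th summand. The corresponding comorphism $\Phi^*: K[Y_{p3}]\to K[T_p]$ is then an $\mathrm{SO}_3(K)$-equivariant $K$-algebra isomorphism, and by construction it sends each $y_{kj}$ (of $\mathbb{N}_0^p$-degree $e_k$) to a $K$-linear combination of the variables $t_{ij}^{(k)}$ with $i<j$ (which also have degree $e_k$). Hence $\Phi^*$ preserves the $\mathbb{N}_0^p$-grading. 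Restricting $\Phi^*$ to $\mathrm{SO}_3(K)$-invariants gives the claimed $\mathbb{N}_0^p$-graded $K$-algebra isomorphism $K[Y_{p3}]^{\mathrm{SO}_3(K)}\cong K[T_p]^{\mathrm{SO}_3(K)}$.

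There is no real obstacle: the only nontrivial point is the $\mathrm{SO}_3(K)$-equivariance of the hat map $\varphi$, and this reduces to the elementary observation that rotations of $K^3$ preserve the cross product. Everything after that is a formal passage to coordinate rings and invariants.
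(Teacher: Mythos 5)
Your proof is correct and follows essentially the same route as the paper: both exhibit an explicit $\mathrm{SO}_3(K)$-equivariant linear isomorphism $so_3(K)\cong K^3$ (the paper's map $a_1\mapsto e_3$, $a_2\mapsto -e_2$, $a_3\mapsto e_1$ is just $-\varphi$ for your hat map), take the $p$-fold direct sum, and pass to comorphisms and invariants. Your justification of equivariance via $g(u\times v)=(gu)\times(gv)$ is a slightly more conceptual packaging of the paper's ``straightforward calculation,'' but the argument is the same.
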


\begin{proof}
The vector space $so_3(K)$ has a basis
\[
a_1=\left(\begin{matrix}
0&1&0\\
-1&0&0\\
0&0&0\\
\end{matrix}\right),\quad
a_2=\left(\begin{matrix}
0&0&1\\
0&0&0\\
-1&0&0\\
\end{matrix}\right),\quad
a_3=\left(\begin{matrix}
0&0&0\\
0&0&1\\
0&-1&0\\
\end{matrix}\right).
\]
Denoting by $e_1,e_2,e_3$ the standard basis vectors in the space $K^3$ of column vectors, a straightforward calculation shows that
the linear map
\[
so_3(K)\to K^3,\quad a_1\mapsto e_3,\ a_2\mapsto -e_2,\ a_3\mapsto e_1
\]
is an isomorphism between the $\text{SO}_3(K)$-modules $so_3(K)$ and $K^3$,
where $\text{SO}_3(K)$ acts via conjugation on $so_3(K)$ and via matrix multiplication on $K^3$. This isomorphism induces an isomorphism of
the $\text{SO}_3(K)$-modules $so_3(K)^{\oplus p}\cong (K^3)^{\oplus p}$, their coordinate rings $K[T_p]\cong K[Y_{p3}]$, and finally the
${\mathbb N}_0^p$-graded subalgebras $K[T_p]^{\text{SO}_3(K)}\cong K[Y_{p3}]^{\text{SO}_d(K)}$ of $\text{SO}_3(K)$-invariants.
For sake of completeness of the picture, we mention that
the  basis $\{a_1,a_2,a_3\}$ in $so_3(K)$ is  orthonormal
with respect to the nondegenerate, symmetric, $\text{SO}_3(K)$-invariant bilinear form defined by
\[
\langle a,b\rangle=-\frac 12\text{tr}(ab),\quad a,b\in so_3(K).
\]
\end{proof}

For a skew-symmetric $3\times 3$ matrix $a$ and a symmetric $3\times 3$ matrix $b$
we have $\mathrm{tr}(ab)=0$. It follows that $\mathrm{tr}(t_1t_2t_3)+\mathrm{tr}(t_2t_1t_3)=\mathrm{tr}((t_1t_2+t_2t_1)t_3)=0$, so for any permutation $\pi\in S_3$ we have
\[
\mathrm{tr}(t_{\pi(1)}t_{\pi(2)}t_{\pi(3)})=\mathrm{sign}(\pi)\mathrm{tr}(t_1t_2t_3).
\]

\begin{corollary}\label{cor:tr(xyz)}
{\rm (i)} The algebra $K[T_p]^{\mathrm{SO}_3(K)}$ is generated by the elements $\mathrm{tr}(t_it_j)$, $1\le i\le j\le p$, and  $\mathrm{tr}(t_kt_lt_m)$, $1\le k<l<m\le p$.

{\rm (ii)}
The algebra $K[T_3]^{\mathrm{SO}_3(K)}$ is a rank two free module
generated by $\mathrm{tr}(t_1t_2t_3)$ over its subalgebra generated by the algebraically independent elements
$\mathrm{tr}(t_1^2)$, $\mathrm{tr}(t_2^2)$, $\mathrm{tr}(t_3^2)$, $\mathrm{tr}(t_1t_2)$, $\mathrm{tr}(t_1t_3)$, $\mathrm{tr}(t_2t_3)$.
\end{corollary}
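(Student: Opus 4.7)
The plan is to transfer the problem to the vector-invariant setting via the ${\mathbb N}_0^p$-graded isomorphism
\[
\psi:K[T_p]^{\mathrm{SO}_3(K)}\stackrel{\cong}{\longrightarrow}K[Y_{p3}]^{\mathrm{SO}_3(K)}
\]
of Proposition~\ref{isomorphic algebras of invariants}, and then to apply the First Fundamental Theorem for $\mathrm{SO}_3$ (Theorem~\ref{first fundamental theorem}). Using the orthonormality statement at the end of the proof of that proposition, $\psi$ will send $\mathrm{tr}(t_it_j)$ to $-2\langle y_i,y_j\rangle$. A short check based on the axial-vector description (under which the Lie bracket $[t_i,t_j]$ corresponds to the cross product $y_i\times y_j$), combined with the identity $\mathrm{tr}(t_it_jt_k)=\tfrac12\mathrm{tr}([t_i,t_j]t_k)$ valid because $\mathrm{tr}(\{t_i,t_j\}t_k)=0$ (symmetric times skew-symmetric has zero trace), will show that $\psi$ maps $\mathrm{tr}(t_1t_2t_3)$ to a nonzero scalar multiple of $\Delta_3(y_1,y_2,y_3)$.

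For part~(i), Theorem~\ref{first fundamental theorem}(i) shows that $K[Y_{p3}]^{\mathrm{SO}_3(K)}$ is generated by the scalar products $\langle y_i,y_j\rangle$ (of total degree~$2$) and the determinants $\Delta_3(y_{j_1},y_{j_2},y_{j_3})$ with $1\le j_1<j_2<j_3\le p$ (of total degree~$3$). Since $\psi$ preserves multi-degrees, $K[T_p]^{\mathrm{SO}_3(K)}$ is generated by its degree-$2$ and degree-$3$ trace components. The cyclicity of the trace reduces the degree-$2$ generators to $\mathrm{tr}(t_it_j)$ with $i\le j$, while the remark preceding the statement of the corollary together with the vanishings $\mathrm{tr}(t_i^2t_j)=0$ (again because $t_i^2$ is symmetric and $t_j$ is skew-symmetric) reduce the degree-$3$ generators to $\mathrm{tr}(t_kt_lt_m)$ with $k<l<m$.

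For part~(ii), with $p=3$ the only degree-$3$ generator is $\Delta_3(y_1,y_2,y_3)$, so Theorem~\ref{first fundamental theorem}(ii) yields
\[
K[Y_{33}]^{\mathrm{SO}_3(K)}=B'+B'\cdot\Delta_3(y_1,y_2,y_3),
\]
where $B'$ denotes the subalgebra generated by the six scalar products. To show this sum is direct I will consider the ${\mathbb Z}/2$-action by $-I\in\mathrm{O}_3(K)\setminus\mathrm{SO}_3(K)$ on $K[Y_{33}]$: it fixes every $\langle y_i,y_j\rangle$ while sending $\Delta_3$ to $-\Delta_3$. Decomposing into $(\pm 1)$-eigenspaces, a relation $P+Q\Delta_3=0$ with $P,Q\in B'$ forces $P=0$ and $Q\Delta_3=0$; then $Q=0$ since $K[Y_{33}]$ is a domain. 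For algebraic independence of the six scalar products I will argue by Krull dimension: the $(+1)$-eigenspace above equals $K[Y_{33}]^{\mathrm{O}_3(K)}=B'$, and because $\mathrm{O}_3(K)$ is $3$-dimensional and acts with generic trivial stabilizer on $V_3^{\oplus 3}\cong K^9$, the algebra $B'$ has Krull dimension $9-3=6$; a $6$-element generating set of a $6$-dimensional domain is automatically algebraically independent. Pulling everything back through $\psi$ yields~(ii). The main obstacle is conceptual rather than computational: on skew-symmetric matrices $\mathrm{O}_3(K)$ and $\mathrm{SO}_3(K)$ produce the same invariants because $-I$ acts trivially on $so_3(K)$ by conjugation, so the ${\mathbb Z}/2$-grading that separates $B'$ from $B'\cdot\mathrm{tr}(t_1t_2t_3)$ is invisible on the $T_p$-side and must be imported from the $Y_{p3}$-side via $\psi$.
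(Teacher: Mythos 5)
Your argument is correct, and for part (i) it is the same route the paper takes: transfer the problem through the graded isomorphism of Proposition~\ref{isomorphic algebras of invariants} and read off the generators from Theorem~\ref{first fundamental theorem}, identifying the multidegree-$(e_i+e_j)$ and multidegree-$(e_k+e_l+e_m)$ invariant components with the spans of $\mathrm{tr}(t_it_j)$ and $\mathrm{tr}(t_kt_lt_m)$. Where you genuinely diverge is part (ii). The paper gets both the algebraic independence of the six scalar products and the freeness of the rank-two module directly from the Second Fundamental Theorem (Theorem~\ref{second fundamental theorem}): for $p=d=3$ the Gram relations $\Gamma_4=0$ and the last family of relations are vacuous, so the only defining relation is $\Delta_3^2=\Gamma_3$, which immediately presents $K[Y_{33}]^{\mathrm{SO}_3(K)}$ as $B'[\Delta_3]/(\Delta_3^2-\Gamma_3)$ with $B'$ a polynomial ring. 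You instead avoid the SFT altogether: the directness of $B'\oplus B'\Delta_3$ comes from the eigenspace decomposition under $-I\in\mathrm{O}_3(K)\setminus\mathrm{SO}_3(K)$ (a clean observation, and your closing remark that this grading is invisible on the $T_p$-side because $-I$ acts trivially on $so_3(K)$ by conjugation is exactly why it must be imported from the vector-invariant picture), and the algebraic independence comes from a Krull dimension count $\dim K[Y_{33}]^{\mathrm{O}_3(K)}=9-3=6$. The trade-off: the paper's route is shorter given that the SFT is already on the table, while yours is more robust in that it needs only the First Fundamental Theorem plus general facts about reductive quotients (generic orbits of spanning triples are closed with trivial stabilizer, so the dimension formula applies). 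Both are complete proofs; do make sure to actually carry out the deferred ``short check'' that $\psi(\mathrm{tr}(t_1t_2t_3))$ is a nonzero multiple of $\Delta_3$ (e.g.\ by evaluating $\mathrm{tr}(a_1a_2a_3)=1\neq 0$ and noting the multidegree-$(1,1,1)$ invariant component is one-dimensional), since part (i) as you set it up rests on it.
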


\begin{proof} (i) is an immediate consequence of Theorem~\ref{first fundamental theorem},
Corollary~\ref{skew-symmetric invariants}, Proposition~\ref{isomorphic algebras of invariants}.
Taking into account also Theorem~\ref{second fundamental theorem}, we get (ii).
\end{proof}

\begin{corollary}\label{cor:cov-generators} 
The $K$-subalgebra ${\mathcal{E}}_p$ of $M_3(K[T_p])$ is generated
by the generic skew-symmetric matrices $t_1,\dots,t_p$ and the scalar matrices $\mathrm{tr}(t_it_j)I$ ($1\le i\le j\le p$), $\mathrm{tr}(t_kt_lt_m)I$ ($1\le k<l<m\le p$).
\end{corollary}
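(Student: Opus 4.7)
The plan is to combine Proposition~\ref{prop:embedding}(i) with Corollary~\ref{cor:tr(xyz)}(i); the corollary is essentially a formal consequence of these two statements, so no substantial new argument is needed.

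First I would invoke Proposition~\ref{prop:embedding}(i), which says that ${\mathcal{E}}_p$ is generated as a $K$-algebra by $t_1,\ldots,t_p$ together with all scalar matrices of the form $\mathrm{tr}(t_{k_1}\cdots t_{k_n})I$ with $n\ge 2$ and $1\le k_1,\ldots,k_n\le p$. Thus it suffices to show that each such scalar matrix lies in the subalgebra $\mathcal{A}\subseteq M_3(K[T_p])$ generated by $t_1,\ldots,t_p$, by $\mathrm{tr}(t_it_j)I$ ($1\le i\le j\le p$), and by $\mathrm{tr}(t_kt_lt_m)I$ ($1\le k<l<m\le p$).

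Next I would observe that the map $K[T_p]\to M_3(K[T_p])$, $\alpha\mapsto \alpha I$, is an injective $K$-algebra homomorphism; in particular the scalar matrices with trace-algebra entries form a $K$-subalgebra isomorphic to $K[T_p]^{\mathrm{SO}_3(K)}$. By Corollary~\ref{cor:tr(xyz)}(i), every element $\mathrm{tr}(t_{k_1}\cdots t_{k_n})$ of $K[T_p]^{\mathrm{SO}_3(K)}$ can be written as a polynomial (with coefficients in $K$) in the elements $\mathrm{tr}(t_it_j)$ ($1\le i\le j\le p$) and $\mathrm{tr}(t_kt_lt_m)$ ($1\le k<l<m\le p$). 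Applying the homomorphism $\alpha\mapsto \alpha I$ to such an expression, I obtain $\mathrm{tr}(t_{k_1}\cdots t_{k_n})I$ as the same polynomial in the scalar matrices $\mathrm{tr}(t_it_j)I$ and $\mathrm{tr}(t_kt_lt_m)I$, which lies in $\mathcal{A}$.

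Combining the two steps, $\mathcal{A}$ contains the generating set of ${\mathcal{E}}_p$ supplied by Proposition~\ref{prop:embedding}(i), and is clearly contained in ${\mathcal{E}}_p$, so $\mathcal{A}={\mathcal{E}}_p$. There is no real obstacle here: all the substantive work (the identification of ${\mathcal{E}}_p$ via the embedding $\iota$, and the trimming of the generating set of the trace algebra using the second fundamental theorem to pass to $i\le j$ and $k<l<m$) has already been done in Proposition~\ref{prop:embedding} and Corollary~\ref{cor:tr(xyz)}; the corollary above is simply their juxtaposition.
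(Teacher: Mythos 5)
Your proposal is correct and follows exactly the paper's own route: the paper's proof is the one-line observation that the corollary follows from Proposition~\ref{prop:embedding}~(i) and Corollary~\ref{cor:tr(xyz)}~(i), and you have simply spelled out the (straightforward) details of that deduction.
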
 
\begin{proof} 
This follows from Proposition~\ref{prop:embedding} (i) and Corollary~\ref{cor:tr(xyz)} (i). 
\end{proof} 

\subsection{Representation theory of ${\mathrm{GL}}_p(K)$}
\label{subsec:rep-GL}

In what follows we assume that the general linear group $\text{GL}_p(K)=\text{GL}(KX_p)$ acts
canonically on the vector space $KX_p$ with basis $X_p$. That is, for 
\[g=(g_{ij})_{i,j=1}^p\in \mathrm{GL}_p(K) \text{ we have }g(x_j)=\sum_{i=1}^pg_{ij}x_i,\quad j=1,\ldots,p.
\]
This action can be extended diagonally on the tensor algebra
\[
T(KX_p)=\sum_{n\geq 0}(KX_p)^{\otimes n}\cong K\langle X_p\rangle.
\]
In the sequel we shall identify $T(KX_p)$ with the free associative algebra $K\langle X_p\rangle$
and $(KX_p)^{\otimes n}$ with the homogeneous component $K\langle X_p\rangle^{(n)}$ of degree $n$ of
$K\langle X_p\rangle$. 
The standard $\mathbb{N}_0^p$-grading on $K\langle X_p\rangle$ corresponds to the decomposition of $K\langle X_p\rangle$ into the direct sum of the isotypic components under the action of the subgroup of diagonal matrices in $\mathrm{GL}_p(K)$. 
The $\text{GL}_p(K)$-module $K\langle X_p\rangle$ is a direct sum of irreducible polynomial $\text{GL}_p(K)$-modules.
The irreducible polynomial $\text{GL}_p(K)$-modules are indexed by partitions having  not more than $p$ parts
and all they appear as summands in $K\langle X_p\rangle$. Let
\[
\lambda=(\lambda_1,\ldots,\lambda_p),\quad \lambda_1\geq\cdots\geq\lambda_p\geq 0,
\quad\lambda_1+\cdots+\lambda_p=n,
\]
be a partition of $n$ and let $W_p(\lambda)$ be the corresponding $\text{GL}_p(K)$-module.
By Schur-Weyl duality (cf. \cite{Weyl} or \cite[page 256, (3.1.4)]{Procesi:3}), 
the homogeneous component $K\langle X_p\rangle^{(n)}$ of $K\langle X_p\rangle$ decomposes as
\[
K\langle X_p\rangle^{(n)}\cong \sum_{\lambda\vdash n}\deg(\lambda)W_p(\lambda),
\]
where $\deg(\lambda)$ is the degree of the irreducible $S_n$-character $\chi_{\lambda}$.
A non-zero element of $K\langle X_p\rangle^{(n)}$ is called a 
\emph{highest weight vector of weight $\lambda$} if it is 
fixed by the subgroup of unipotent upper triangular matrices in $\mathrm{GL}_p(K)$, and it  is multihomogeneous of ${\mathbb N}_0^p$-degree $\lambda$. 
Any $\text{GL}_p(K)$-submodule $W\subset K\langle X_p\rangle^{(n)}$, $W\cong W_p(\lambda)$ 
contains a unique (up to non-zero scalar multiples) highest weight vector (necessarily having weight $\lambda$ and generating $W$ as a $\mathrm{GL}_p(K)$-module). The 
highest weight vectors in  $K\langle X_p\rangle^{(n)}$ can be described in the following way. The symmetric group $S_n$ acts from the right  on $K\langle X_p\rangle^{(n)}$
by the rule
\[
(x_{i_1}\cdots x_{i_n})^{\tau}=x_{i_{\tau(1)}}\cdots x_{i_{\tau(n)}},\quad \tau\in S_n,
\]
and this action commutes with the action of $\text{GL}_p(K)$ introduced before.
Let $[\lambda]$ be the Young diagram corresponding to the partition $\lambda$
and let the lengths of the columns of $[\lambda]$ be $k_1,\ldots,k_{\lambda_1}$.
Consider the product of standard polynomials
\[
w_{\lambda}(x_1,\ldots,x_{k_1})=\prod_{j=1}^{\lambda_1}s_{k_j}(x_1,\ldots,x_{k_j})
=\prod_{j=1}^{\lambda_1}\left(\sum_{\sigma_j\in S_{k_j}}\text{sign}(\sigma_j)x_{\sigma_j(1)}\cdots x_{\sigma_j(k_j)}\right).
\]
Then every highest weight vector of weight $\lambda$ is of the form
\[
w=\sum_{\tau\in S_n}\alpha_{\tau}w_{\lambda}^{\tau},\quad \alpha_{\tau}\in K.
\]
A $\lambda$-tableau is the Young diagram $[\lambda]$ whose boxes are filled with positive integers. We say that the tableau is of content $(n_1,\ldots,n_p)$
if $1,\ldots,p$ appear in it $n_1,\ldots,n_p$ times, respectively.
The tableau is standard
if its entries are the numbers $1,\ldots,n$, without repetition, arranged in such a way that they increase in rows (reading them from left to right)
and in columns (reading from top to bottom).
It is semistandard if its entries (allowing repetitions) do not decrease in rows and increase in columns.

Given a partition $\lambda$ of $n$, we set up a bijection between the set of $\lambda$-tableaux of content $(1,\dots,1)$ and $S_n$ as follows:
we assign to the permutation
$\varrho\in S_n$ the Young tableau $T_{\lambda}(\varrho)$ obtained by filling in the boxes
 of the first column of $[\lambda]$ with $\varrho^{-1}(1),\ldots,\varrho^{-1}(k_1)$, of the second column with
$\varrho^{-1}(k_1+1),\ldots,\varrho^{-1}(k_1+k_2)$, etc.
Then the highest weight vector $w_{\lambda}^{\varrho}$ has skew-symmetries in the positions listed in the first column of $T_{\lambda}(\varrho)$,
skew-symmetries in the positions listed in the second  column of $T_{\lambda}(\varrho)$, etc.
For example, for $n=5$, $\lambda=(2,2,1)$, and
\[
\varrho^{-1}=\left(\begin{matrix}1&2&3&4&5\\
1&5&3&2&4\\
\end{matrix}\right), \text{ we have }
\]
\vbox{$\displaystyle T_{\lambda}(\varrho)=\young(12,54,3), \quad w_{\lambda}^{\varrho}=\sum_{\sigma_1\in S_3,\sigma_2\in S_2}
\text{sign}(\sigma_1)\text{sign}(\sigma_2) x_{\sigma_1(1)}x_{\sigma_2(1)}x_{\sigma_1(3)}x_{\sigma_2(2)}x_{\sigma_1(2)}$.\\
\phantom{xxx}}
It is known (see e.g. \cite{Drensky:2}) that the set of all $w_{\lambda}^{\varrho}$ corresponding to the standard $\lambda$-tableaux $T_{\lambda}(\varrho)$ is a basis
of the vector space of the highest weight vectors of weight $\lambda$ in 
$K\langle X_p\rangle^{(n)}$. 
We note also that if $w_i$ $(i=1,2)$ is a highest weight vector of weight $\lambda^{(i)}\vdash n_i$ in $K\langle X_p\rangle^{(n_i)}$, then the product $w_1w_2$ is a highest weight vector of weight $\lambda^{(1)}+\lambda^{(2)}$ in    
$K\langle X_p\rangle^{(n_1+n_2)}$. 

\begin{proposition}\label{correspondence with semistandard tableaux}
{\rm (i)} There is a one-to-one correspondence between an arbitrary ${\mathbb N}_0^p$-graded basis
of a  ${\mathrm{GL}}_p(K)$-submodule of $K\langle X_p\rangle$ isomorphic to $W_p(\lambda)$ 
and the set of semistandard $\lambda$-tableaux filled in with $1,\ldots,p$,
such that a basis vector of degree $(n_1,\ldots,n_p)$ corresponds to a semistandard tableau of content  $(n_1,\ldots,n_p)$.

{\rm (ii)} Let $W$ be a polynomial ${\mathrm{GL}}_p(K)$-module 
(i.e. $W$ is the direct sum of modules isomorphic to $W_p(\lambda)$ for various $\lambda$), 
endowed with the ${\mathbb N}_0^p$-grading given by the action of the subgroup of diagonal matrices in 
${\mathrm{GL}}_p(K)$. 
Suppose that there exists a mapping $\pi$ from an  ${\mathbb N}_0^p$-graded basis of $W$  into the set of semistandard $\lambda$-tableaux, such that
a basis vector of degree $(n_1,\ldots,n_p)$ is mapped to a semistandard tableau of content  $(n_1,\ldots,n_p)$,
and for each partition $\lambda$, there exists a non-negative integer
$m_{\lambda}$ such that that every semistandard $\lambda$-tableau
is the image of exactly $m_{\lambda}$ basis elements.  Then $W$ decomposes as
\[
W=\sum_{\lambda}m_{\lambda}W_p(\lambda).
\]
\end{proposition}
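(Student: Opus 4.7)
The approach is to reduce both statements to the classical fact that the character of the irreducible polynomial $\mathrm{GL}_p(K)$-module $W_p(\lambda)$ is the Schur polynomial $s_\lambda(x_1,\ldots,x_p)$, which by the combinatorial definition equals $\sum_T x^{\mathrm{content}(T)}$, where $T$ ranges over semistandard $\lambda$-tableaux filled from $\{1,\ldots,p\}$.

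For part (i), I would first observe that the $\mathbb{N}_0^p$-grading on $K\langle X_p\rangle$ coincides with the weight space decomposition under the subgroup $D$ of diagonal matrices in $\mathrm{GL}_p(K)$, as already recalled in Section~\ref{subsec:rep-GL}. Hence any $\mathbb{N}_0^p$-graded basis of a submodule isomorphic to $W_p(\lambda)$ is automatically a basis of weight vectors, and the number of basis vectors of degree $(n_1,\ldots,n_p)$ equals $\dim W_p(\lambda)_{(n_1,\ldots,n_p)}$. By the standard character computation (see for instance \cite{Macdonald}), this dimension is the Kostka number $K_{\lambda,(n_1,\ldots,n_p)}$, i.e.\ precisely the number of semistandard $\lambda$-tableaux of content $(n_1,\ldots,n_p)$. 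Any bijection between the basis vectors of a given degree and the semistandard tableaux of matching content then gives the required correspondence.

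For part (ii), the hypothesis on $\pi$ gives directly the formal character identity
\[
\mathrm{ch}_D(W)=\sum_{v\in B}x^{\deg(v)}=\sum_{\lambda}m_{\lambda}\sum_{T}x^{\mathrm{content}(T)}=\sum_{\lambda}m_{\lambda}s_{\lambda}(x_1,\ldots,x_p),
\]
where $B$ is the given basis, $T$ ranges over semistandard $\lambda$-tableaux filled from $\{1,\ldots,p\}$, and $\mathrm{ch}_D$ denotes the character with respect to the diagonal torus $D$. On the other hand $W$, being a polynomial $\mathrm{GL}_p(K)$-module, decomposes as $W=\bigoplus_{\mu}n_{\mu}W_p(\mu)$ with character $\sum_{\mu}n_{\mu}s_{\mu}(x_1,\ldots,x_p)$. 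Since the Schur polynomials $\{s_{\mu}(x_1,\ldots,x_p)\mid \mu_1\ge\cdots\ge\mu_p\ge 0\}$ are linearly independent in $\mathbb{Z}[x_1,\ldots,x_p]$, we conclude $n_{\mu}=m_{\mu}$ for every $\mu$, which is the claimed isotypic decomposition.

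The only mildly delicate point is the appeal to the fact that the $D$-character of a polynomial $\mathrm{GL}_p(K)$-module determines its isomorphism class; this follows from complete reducibility of the polynomial $\mathrm{GL}_p(K)$-modules (in characteristic $0$) together with the linear independence of the Schur polynomials, both of which are classical. No further obstacle arises, since once the Kostka interpretation of weight multiplicities is in place, (i) is a counting bijection and (ii) is a character-matching argument.
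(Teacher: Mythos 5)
Your argument is correct and coincides with the paper's own proof: both parts are reduced to the facts that the weight-space dimensions of $W_p(\lambda)$ are the coefficients of the Schur function (equivalently, Kostka numbers counting semistandard tableaux) and that the formal character determines the decomposition of a polynomial $\mathrm{GL}_p(K)$-module. Your explicit mention of the linear independence of Schur polynomials merely spells out the last step, which the paper leaves implicit.
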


\begin{proof}
The statement (i) follows immediately from the fact that the dimension of the homogeneous component $W_p^{(n_1,\ldots,n_p)}(\lambda)$
of degree $(n_1,\ldots,n_p)$ is equal to the coefficient of $\xi_1^{n_1}\cdots \xi_d^{n_p}$ of the Schur function $S_{\lambda}(\xi_1,\ldots,\xi_p)$.
On the other hand this coefficient is equal to the number of semistandard $\lambda$-tableaux of content $(n_1,\ldots,n_d)$.
For (ii) it is sufficient to apply the fact that the Schur function plays the role of character of the representation of $\text{GL}_p(K)$
corresponding to the $\text{GL}_p(K)$-module $W_p(\lambda)$ and that the character of the direct sum of polynomial representations determines
the decomposition of the corresponding $\text{GL}_p(K)$-module $W_p$.
\end{proof}

The decomposition of the $\text{GL}_p(K)$-module structure of the algebra of invariants of $\text{SO}_3(K)$ acting on $p$ copies of $V_3$
is given for example in \cite[Section 1.2]{Procesi:2} or in \cite[Chapter I, Theorem 4.3]{Le Bruyn} in terms of semistandard tableaux.

\begin{theorem}\label{theorem of Le Bruyn}
The algebra $K[Y_{p3}]^{\text{\rm SO}_3(K)}$
has an ${\mathbb N}_0^p$-graded basis indexed
(via a mapping $\pi$ as in Proposition~\ref{correspondence with semistandard tableaux} (ii))
by all semistandard $\lambda$-tableaux
for all $\lambda=(2\mu_1,2\mu_2,2\mu_3)$ and $\lambda=(2\mu_1+1,2\mu_2+1,2\mu_3+1)$, where $\mu_1,\mu_2,\mu_3\in \mathbb{N}_0$. 
\end{theorem}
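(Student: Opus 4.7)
The plan is to combine the Cauchy identity (Schur--Weyl-type decomposition for $\mathrm{GL}_p(K)\times\mathrm{GL}_3(K)$ acting on the symmetric algebra $S(V_3\otimes K^p)$) with the classical branching from $\mathrm{GL}_3$ to $\mathrm{SO}_3(K)$, and then to invoke Proposition~\ref{correspondence with semistandard tableaux} (ii).

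Viewing $K[Y_{p3}]$ as the coordinate ring of $V_3^{\oplus p}\cong V_3\otimes K^p$, the Cauchy identity gives the $\mathrm{GL}_p(K)\times\mathrm{GL}_3(K)$-module isomorphism
\[
K[Y_{p3}]\cong \bigoplus_{\lambda:\ \ell(\lambda)\le 3}W_p(\lambda)\otimes W_3(\lambda).
\]
Since the $\mathrm{GL}_p(K)$-action commutes with the $\mathrm{GL}_3(K)$-action (and hence with the restricted $\mathrm{SO}_3(K)$-action), taking $\mathrm{SO}_3(K)$-invariants yields
\[
K[Y_{p3}]^{\mathrm{SO}_3(K)}\cong \bigoplus_{\lambda:\ \ell(\lambda)\le 3} W_p(\lambda)\otimes W_3(\lambda)^{\mathrm{SO}_3(K)},
\]
so the problem reduces to computing the multiplicity $\dim W_3(\lambda)^{\mathrm{SO}_3(K)}$ for $\lambda=(\lambda_1,\lambda_2,\lambda_3)$.

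For this branching step I would exploit that, in the odd dimension $d=3$, the element $-I$ is central in $\mathrm{O}_3(K)$, lies outside $\mathrm{SO}_3(K)$, and has order two, whence $\mathrm{O}_3(K)=\mathrm{SO}_3(K)\times\{\pm I\}$. Consequently $W_3(\lambda)^{\mathrm{SO}_3(K)}$ decomposes as the direct sum of the $\mathrm{O}_3(K)$-isotypic components of the trivial and of the sign ($\det$) character. Littlewood's classical branching rule from $\mathrm{GL}_3$ to $\mathrm{O}_3$ gives that the trivial $\mathrm{O}_3$-representation appears in $W_3(\lambda)$ with multiplicity $1$ iff every $\lambda_i$ is even. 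For the $\det$-multiplicity one uses the $\mathrm{GL}_3$-isomorphism $W_3(\lambda)\otimes\det^{-1}\cong W_3(\lambda_1-1,\lambda_2-1,\lambda_3-1)$ (for $\lambda_3\ge 1$) to reduce it to the trivial-multiplicity in $W_3(\lambda-(1,1,1))|_{\mathrm{O}_3}$, yielding $1$ iff every $\lambda_i$ is odd. Summing, $\dim W_3(\lambda)^{\mathrm{SO}_3(K)}=1$ precisely when $\lambda_1\equiv\lambda_2\equiv\lambda_3\pmod 2$, which is exactly the union of the families $(2\mu_1,2\mu_2,2\mu_3)$ and $(2\mu_1+1,2\mu_2+1,2\mu_3+1)$ with $\mu_1\ge\mu_2\ge\mu_3\ge 0$, and vanishes otherwise.

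Finally, Proposition~\ref{correspondence with semistandard tableaux} (ii) applied to the $\mathbb{N}_0^p$-graded $\mathrm{GL}_p(K)$-module $K[Y_{p3}]^{\mathrm{SO}_3(K)}$ with these multiplicities produces an $\mathbb{N}_0^p$-graded basis indexed by semistandard tableaux of the claimed shapes. The main obstacle is the branching step; in the small case $d=3$ one can verify Littlewood's formula directly via the isogeny $\mathrm{SL}_2(K)\twoheadrightarrow\mathrm{SO}_3(K)$ under which $V_3$ is identified with $\mathrm{Sym}^2$ of the standard $\mathrm{SL}_2$-module, reducing the multiplicity count to an explicit Clebsch--Gordan calculation for $\mathrm{SL}_2(K)$.
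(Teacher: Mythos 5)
Your argument is sound and is genuinely different from what the paper does: the paper offers no proof of Theorem~\ref{theorem of Le Bruyn} at all, but simply quotes it from \cite[Section 1.2]{Procesi:2} and \cite[Chapter I, Theorem 4.3]{Le Bruyn}. Your route --- Cauchy's identity $K[Y_{p3}]\cong\bigoplus_{\ell(\lambda)\le 3}W_p(\lambda)\otimes W_3(\lambda)$ followed by the computation of $\dim W_3(\lambda)^{\mathrm{SO}_3(K)}$ via $\mathrm{O}_3(K)=\mathrm{SO}_3(K)\times\{\pm I\}$ --- is a standard and correct way to derive the result, and it has the advantage of making transparent \emph{why} the answer is ``multiplicity one for each $\lambda$ with all parts of equal parity'': the two parity classes correspond exactly to the trivial and the $\det$ isotypic components of $W_3(\lambda)|_{\mathrm{O}_3(K)}$. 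Note that this is essentially equivalent to the FFT/SFT route already available inside the paper (Theorems~\ref{first fundamental theorem} and \ref{second fundamental theorem}): the even family $\lambda=2\mu$ is the $\mathrm{GL}_p$-decomposition of the coordinate ring of symmetric $p\times p$ matrices of rank $\le 3$ (the subalgebra generated by the $\langle y_i,y_j\rangle$), and the odd family arises from multiplying by the determinants $\Delta_3$.

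Two small repairs are needed. First, your reduction of the $\det$-multiplicity via $W_3(\lambda)\otimes\det^{-1}\cong W_3(\lambda-(1,1,1))$ only covers $\lambda_3\ge 1$, so the case $\lambda_3=0$ with $\lambda_1,\lambda_2$ odd is asserted but not proved; since $\det=\det^{-1}$ as a character of $\mathrm{O}_3(K)$, tensor instead with $W_3(1,1,1)$ and use the even-parts criterion on $\lambda+(1,1,1)$ (which then contains the odd part $1$, so the multiplicity vanishes). Second, be careful with ``Littlewood's branching rule'': for $d=3$ a general $\lambda$ with up to three rows lies outside the stable range $\ell(\lambda)\le d/2$, where the rule requires modification; however, the particular statement you need --- that the trivial $\mathrm{O}_d$-representation occurs in $W_d(\lambda)$ with multiplicity $1$ if all parts of $\lambda$ are even and $0$ otherwise --- is valid without restriction (it is exactly the FFT plus SFT for $\mathrm{O}_d$ acting on vectors), and your suggested direct check through $\mathrm{SL}_2(K)\twoheadrightarrow\mathrm{SO}_3(K)$ and Clebsch--Gordan also settles it. Finally, the conversion of the multiplicity computation into a tableau-indexed graded basis uses Proposition~\ref{correspondence with semistandard tableaux}~(i) rather than (ii), but this is only a labelling issue.
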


As an immediate consequence of Propositions \ref{isomorphic algebras of invariants} and
\ref{correspondence with semistandard tableaux} and Theorem \ref{theorem of Le Bruyn} we obtain:

\begin{corollary}\label{decomposition of special orthogonal invariants}
As a ${\mathrm{GL}}_p(K)$-module the algebra $K[T_p]^{\text{\rm SO}_3(K)}$ of invariants of
the action by simultaneous conjugation of $\text{\rm SO}_3(K)$ on $p$ copies of $3\times 3$ skew-symmetric matrices decomposes as
\[
K[T_p]^{\text{\rm SO}_3(K)}\cong \sum W_p(2\mu_1+\delta,2\mu_2+\delta,2\mu_3+\delta),
\]
where the summation runs on all partitions $(\mu_1,\mu_2,\mu_3)$ and $\delta=0$ or $1$.
\end{corollary}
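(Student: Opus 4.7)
The plan is to directly combine Proposition~\ref{isomorphic algebras of invariants}, Theorem~\ref{theorem of Le Bruyn}, and Proposition~\ref{correspondence with semistandard tableaux}\,(ii), as signalled by the wording ``immediate consequence'' preceding the statement. The only subtlety is that Proposition~\ref{isomorphic algebras of invariants} is stated as an isomorphism of $\mathbb{N}_0^p$-graded algebras, whereas we need a $\mathrm{GL}_p(K)$-module isomorphism. So first I would record that the identification $so_3(K)\cong K^3$ of $\mathrm{SO}_3(K)$-modules used in the proof of Proposition~\ref{isomorphic algebras of invariants} induces, on the $p$-th direct powers, a $K$-linear bijection $so_3(K)^{\oplus p}\to (K^3)^{\oplus p}$ which commutes with the right $\mathrm{GL}_p(K)$-action that takes linear combinations of the $p$ components, since both actions are determined coordinate-wise by the same matrices $g\in\mathrm{GL}_p(K)$. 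Dualising, the resulting $K$-algebra isomorphism $K[Y_{p3}]\to K[T_p]$ is $\mathrm{GL}_p(K)$-equivariant, and its restriction to the $\mathrm{SO}_3(K)$-invariant subalgebras therefore gives a $\mathrm{GL}_p(K)$-module isomorphism
\[
K[T_p]^{\mathrm{SO}_3(K)}\cong K[Y_{p3}]^{\mathrm{SO}_3(K)}.
\]

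With this observation in hand, the rest is routine. Theorem~\ref{theorem of Le Bruyn} supplies an $\mathbb{N}_0^p$-graded basis of $K[Y_{p3}]^{\mathrm{SO}_3(K)}$ together with a mapping $\pi$ to semistandard tableaux of the required types, in which each semistandard $\lambda$-tableau with $\lambda=(2\mu_1+\delta,2\mu_2+\delta,2\mu_3+\delta)$ (for $\delta\in\{0,1\}$ and $\mu_1\ge\mu_2\ge\mu_3\ge 0$) is hit exactly once, and no other shapes occur. Applying Proposition~\ref{correspondence with semistandard tableaux}\,(ii) with multiplicities $m_\lambda=1$ on these shapes and $m_\lambda=0$ otherwise yields the decomposition
\[
K[Y_{p3}]^{\mathrm{SO}_3(K)}\cong\sum W_p(2\mu_1+\delta,2\mu_2+\delta,2\mu_3+\delta),
\]
and transporting through the $\mathrm{GL}_p(K)$-module isomorphism from the previous paragraph gives the claim. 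There is no real obstacle: every nontrivial input (the description of $\mathrm{SO}_3(K)$-invariants of $V_3^{\oplus p}$ via semistandard tableaux, and the translation between $so_3(K)^{\oplus p}$ and $V_3^{\oplus p}$) has already been assembled in the preceding results.
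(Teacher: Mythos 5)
Your proposal is correct and follows exactly the route the paper takes: the corollary is stated there as an immediate consequence of Proposition~\ref{isomorphic algebras of invariants}, Proposition~\ref{correspondence with semistandard tableaux} and Theorem~\ref{theorem of Le Bruyn}, with no further argument given. Your additional observation that the graded isomorphism $K[Y_{p3}]\cong K[T_p]$ is in fact ${\mathrm{GL}}_p(K)$-equivariant is a correct and worthwhile filling-in of a detail the paper leaves implicit.
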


\section{The cocharacter sequence and  highest weight vectors}\label{sec:main result}

Since $\dim(so_3(K))=3$, by a theorem of Regev \cite{Regev} 
the cocharacter sequence of $I(M_3(K),so_3(K))$
is of the form
\[
\chi_n(M_3(K),so_3(K))=\sum_{\lambda\vdash n}m_{\lambda}\chi_{\lambda},
\]
where $\lambda=(\lambda_1,\lambda_2,\lambda_3)$ is a partition of $n$ in not more than three parts.
This allows to replace the problem for the cocharacter sequence with the problem of the decomposition
into a direct sum of irreducible components of the $\text{GL}_3(K)$-module $F_3(M_3(K),so_3(K))$. 
The map $x_i\mapsto t_i$ induces an isomorphism 
\begin{equation}\label{eq:F3F3} 
F_3(M_3(K),so_3(K))\cong {\mathcal{F}}_3.\end{equation}   
The algebra ${\mathcal{F}}_3$ is contained in ${\mathcal{E}}_3$. 
By Proposition~\ref{prop:embedding} we have 
\begin{equation}\label{eq:F3E3}  
{\mathcal{F}}_3\subseteq {\mathcal{E}}_3\cong  (K[T_3,Z]^{\mathrm{SO}_3(K)})^{(\mathbb{N}_0,\dots,\mathbb{N}_0,1)}.\end{equation} 
Thus the multiplicities of 
the irreducible $\text{GL}_3(K)$-modules in ${\mathcal{F}}_3$ are bounded by their multiplicities in 
${\mathcal{E}}_3$, and the latter can be computed using 
Corollary~\ref{decomposition of special orthogonal invariants}, thanks to Lemma~\ref{lemma:a4a5} below. 

To state Lemma~\ref{lemma:a4a5} we need some notation. 
Let $(K[T_5]^{\text{SO}_3(K)})^{({\mathbb N}_0,{\mathbb N}_0,{\mathbb N}_0,1,1)}$
be the component of the generic trace algebra $K[T_5]^{\text{SO}_3(K)}$ which is linear in the generic skew-symmetric matrices
$t_4$ and $t_5$. Embedding $\text{GL}_3(K)$ into $\text{GL}_5(K)$ by
\[
\text{GL}_3(K)\ni g=\left(\begin{matrix}
g_{11}&g_{12}&g_{13}\\
g_{21}&g_{22}&g_{23}\\
g_{31}&g_{32}&g_{33}\\
\end{matrix}\right)\mapsto
\left(\begin{matrix}
g_{11}&g_{12}&g_{13}&0&0\\
g_{21}&g_{22}&g_{23}&0&0\\
g_{31}&g_{32}&g_{33}&0&0\\
0&0&0&1&0\\
0&0&0&0&1\\
\end{matrix}\right)\in \text{GL}_5(K)
\]
we equip the vector space $(K[T_5]^{\text{SO}_3(K)})^{({\mathbb N}_0,{\mathbb N}_0,{\mathbb N}_0,1,1)}$
with the structure of a $\text{GL}_3(K)$-module.

\begin{lemma}\label{lemma:a4a5} 
The comorphism of the map 
\begin{align*}
\mu:so_3(K)^{\oplus 5} \to so_3(K)^{\oplus 3}\oplus M_3(K),
\\ (a_1,a_2,a_3,a_4,a_5)\mapsto (a_1,a_2,a_3,a_4\cdot a_5)
\end{align*}
gives a ${\mathrm{GL}}_3(K)$-module isomorphism
\[
\mu^*:(K[T_3,Z]^{\mathrm{SO}_3(K)})^{(\mathbb{N}_0,\mathbb{N}_0,\mathbb{N}_0,1)}
\stackrel{\cong}\longrightarrow (K[T_5])^{\mathrm{SO}_3(K)})^{(\mathbb{N}_0,\mathbb{N}_0,\mathbb{N}_0,1,1)}.
\]
\end{lemma}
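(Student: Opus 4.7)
The plan is to identify $\mu^*$ on the specified multidegree components with the base change of a single $K$-linear map, and then reduce the lemma to a routine linear-algebraic fact about matrix multiplication of $3\times 3$ skew-symmetric matrices.

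First I would verify equivariance. The map $\mu$ is $\mathrm{SO}_3(K)$-equivariant because $(ga_4g^{-1})(ga_5g^{-1})=g(a_4a_5)g^{-1}$, and it is ${\mathrm{GL}}_3(K)$-equivariant because the embedding ${\mathrm{GL}}_3(K)\hookrightarrow {\mathrm{GL}}_5(K)$ described above fixes the last two coordinates, so the first three $so_3(K)$-summands transform compatibly while $a_4,a_5$ are untouched. Hence $\mu^*$ is a ${\mathrm{GL}}_3(K)$-equivariant $K$-linear map between the stated $\mathrm{SO}_3(K)$-invariant subspaces, and a quick degree count confirms that it lands in the required multidegree component.

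Next I would identify both spaces as free $K[T_3]$-modules. The entry functions $z_{ij}$ give a $K$-basis of $M_3(K)^*$, while the functions $t^{(4)}_{ij}$ (respectively $t^{(5)}_{ij}$) with $i<j$ give $K$-bases of $so_3(K)^*$; therefore
\[
(K[T_3,Z])^{(\mathbb{N}_0,\mathbb{N}_0,\mathbb{N}_0,1)}\cong K[T_3]\otimes_K M_3(K)^*,
\]
\[
(K[T_5])^{(\mathbb{N}_0,\mathbb{N}_0,\mathbb{N}_0,1,1)}\cong K[T_3]\otimes_K so_3(K)^*\otimes_K so_3(K)^*.
\]
Since $\mu^*(\phi)(a_4,a_5)=\phi(a_4a_5)$ for $\phi\in M_3(K)^*$, under these identifications $\mu^*$ coincides with $\mathrm{id}_{K[T_3]}\otimes \pi^*$, where $\pi^*$ is the transpose of the $\mathrm{SO}_3(K)$-equivariant matrix-multiplication map
\[
\pi:so_3(K)\otimes_K so_3(K)\to M_3(K),\quad A\otimes B\mapsto AB.
\]

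The heart of the argument, and the only real (though routine) obstacle, is to show that $\pi$ is a $K$-linear isomorphism. Since $\dim_K(so_3(K)\otimes_K so_3(K))=9=\dim_K M_3(K)$, it suffices to verify surjectivity. Using the basis $\{a_1,a_2,a_3\}$ of $so_3(K)$ from Proposition~\ref{isomorphic algebras of invariants}, a direct computation shows that $a_1^2,a_2^2,a_3^2$ are three linearly independent diagonal matrices spanning the diagonal part of $M_3(K)$, while for $i\ne j$ the products $a_ia_j$ are (up to sign) the six off-diagonal matrix units $e_{kl}$; hence the nine matrices $a_ia_j$ form a $K$-basis of $M_3(K)$. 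Once $\pi$ is known to be an isomorphism, so is $\pi^*$, the base change $\mathrm{id}_{K[T_3]}\otimes \pi^*$ is an $\mathrm{SO}_3(K)\times{\mathrm{GL}}_3(K)$-equivariant $K$-linear isomorphism, and restricting to $\mathrm{SO}_3(K)$-invariants yields the desired ${\mathrm{GL}}_3(K)$-module isomorphism.
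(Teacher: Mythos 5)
Your proposal is correct, and it reaches the conclusion by a somewhat different route than the paper. The paper's proof (following a referee's suggestion) observes that $(a,b)\mapsto ab$ is the algebraic quotient map for the $K^\times$-action $c\cdot(a,b)=(ca,c^{-1}b)$ on $so_3(K)\oplus so_3(K)$, deduces that $\mu^*$ maps $K[T_3,Z]$ onto $K[T_5]^{K^\times}=\bigoplus_j K[T_5]^{(\mathbb{N}_0,\mathbb{N}_0,\mathbb{N}_0,j,j)}$ in all bidegrees $(j,j)$ simultaneously, passes the surjection to $\mathrm{SO}_3(K)$-invariants using linear reductivity, and then proves injectivity on the degree-one component by a separate argument (a linear form on $M_3(K)$ vanishing on all products $ab$ is zero). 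You instead restrict to the relevant multidegree component from the outset, identify $\mu^*$ there with the base change $\mathrm{id}_{K[T_3]}\otimes\pi^*$ of the single finite-dimensional map $\pi:so_3(K)\otimes so_3(K)\to M_3(K)$, and check directly that $\pi$ is bijective (your computation is right: $a_1^2,a_2^2,a_3^2$ span the diagonal and the $a_ia_j$ with $i\neq j$ are signed off-diagonal matrix units). The two arguments ultimately rest on the same linear-algebra fact: the paper's surjectivity step tacitly uses injectivity of $\pi$ (the entries of $t_1t_2$ spanning the full bidegree-$(1,1)$ space) and its injectivity step uses surjectivity of $\pi$; you make both halves explicit and actually verify them, which the paper only asserts. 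What each approach buys: the quotient-map formulation is more conceptual and describes the full image $\mu^*(K[T_3,Z])$, whereas your version is self-contained, avoids invoking reductivity (an equivariant isomorphism trivially restricts to an isomorphism on invariants), and is arguably the cleaner argument for the one component the lemma actually concerns.
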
 
\begin{proof} 
Observe that $so_3(K)\oplus so_3(K)\to M_3(K)$, $(a,b)\mapsto ab$ is the algebraic quotient 
map for the action of the multiplicative group $K^\times$ on $so_3(K)\oplus so_3(K)$ given by $c\cdot (a,b)=(ca,c^{-1}b)$. Indeed, the algebra $K[T_2]^{K^\times}$ of $K^\times$-invariants 
on $so_3(K)\oplus so_3(K)$ is generated by all products $t_{ij}^{(1)}t_{kl}^{(2)}$, and these products span the same $K$-subspace in $K[T_2]$ as the entries of the product $t_1t_2$ 
of the generic skew-symmetric matrices $t_1$ and $t_2$.  
It follows that $\mu$ is the algebraic quotient map for the action 
of $K^\times$ on $so_3(K)^{\oplus 5}$ given by 
\[c\cdot (a_1,a_2,a_3,a_3,a_5)=(a_1,a_2,a_3,ca_4,c^{-1}a_5),\quad c\in K^\times.\]
Therefore the comorphism $\mu^*$ of $\mu$ maps $K[T_3,Z]$ onto 
\[\mu^*(K[T_3,Z])=K[T_5]^{K^\times}=
\bigoplus_{j=0}^{\infty}K[T_5]^{(\mathbb{N}_0,\mathbb{N}_0,\mathbb{N}_0,j,j)},\] 
where $(\mathbb{N}_0,\mathbb{N}_0,\mathbb{N}_0,j,j)$ in the exponent means that we take the sum of the multihomogeneous components with multidegree 
$(\alpha_1,\alpha_2,\alpha_3,j,j)$, $\alpha_1,\alpha_2,\alpha_3$ ranging over $\mathbb{N}_0$. As the action of $K^\times$ commutes with the action of $\mathrm{SO}_3(K)$ on 
$so_3^{\oplus 5}$, the comorphism $\mu^*$ is $\mathrm{SO}_3(K)$-equivariant, and we have 
\[\mu^*(K[T_3,Z]^{\mathrm{SO}_3(K)})^{(\mathbb{N}_0,\mathbb{N}_0,\mathbb{N}_0,j)})
=(K[T_5]^{\mathrm{SO}_3(K)})^{(\mathbb{N}_0,\mathbb{N}_0,\mathbb{N}_0,j,j)}, 
\quad j=0,1,2,\dots.\] 
The restriction of $\mu^*$ to  
$K[T_3,Z]^{(\mathbb{N}_0,\mathbb{N}_0,\mathbb{N}_0,1)}$ is injective, because the image of the multiplication map $so_3(K)\oplus so_3(K)\to M_3(K)$ 
spans $M_3(K)$ as a $K$-vectorspace, hence a linear function on $M_3(K)$ vanishing on all $\{ab\mid a,b\in so_3(K)\}$ must be the zero map. 
Thus the restriction of $\mu^*$ to $K[T_3,Z]^{\mathrm{SO}_3(K)})^{(\mathbb{N}_0,\mathbb{N}_0,\mathbb{N}_0,1)}$ is a vector space isomorphism onto 
$(K[T_5])^{\mathrm{SO}_3(K)})^{(\mathbb{N}_0,\mathbb{N}_0,\mathbb{N}_0,1,1)}$. 
Moreover, it is a $\mathrm{GL}_3(K)$-module homomorphism, because $\mu$ is obviously 
a $\mathrm{GL}_3(K)$-module homomorphism, and the actions of $\mathrm{SO}_3(K)$ and $K^\times$ both commute with the action of $\mathrm{GL}_3(K)$. 
\end{proof} 

\begin{lemma}\label{semistandard tableaux in three rows}
Let $\lambda=(\lambda_1,\lambda_2,\lambda_3)=(2\mu_1+\delta,2\mu_2+\delta,2\mu_3+\delta)$, $\delta=0$ or $1$.
Consider the set of semistandard
$\lambda$-tableaux
of content $(n_1,n_2,n_3,1,1)$.
Deleting the boxes containing $4$ and $5$ from each tableau, we obtain a multiset
of semistandard tableaux of content $(n_1,n_2,n_3)$.
{\rm (i)} The multiplicity of a semistandard $\nu$-tableau of content $(n_1,n_2,n_3)$ in this multiset
is non-zero if and only if $\nu=(\nu_1,\nu_2,\nu_3)$, $\nu_1\ge\nu_2\ge\nu_3\ge 0$,
and
\begin{align*}
\nu\in &\{
(\lambda_1-2,\lambda_2,\lambda_3),\ (\lambda_1,\lambda_2-2,\lambda_3),\ (\lambda_1,\lambda_2,\lambda_3-2),
\\ &
(\lambda_1-1,\lambda_2-1,\lambda_3),\ (\lambda_1-1,\lambda_2,\lambda_3-1),\ (\lambda_1,\lambda_2-1,\lambda_3-1)\}.
\end{align*}

{\rm (ii)} Moreover, the multiplicity is $2$ if 
$\nu=(\lambda_1-1,\lambda_2-1,\lambda_3)$ and $\lambda_1>\lambda_2$, 
or $\nu=(\lambda_1,\lambda_2-1,\lambda_3-1)$ and $\lambda_2>\lambda_3$, 
or $\nu=(\lambda_1-1,\lambda_2,\lambda_3-1)$. 

{\rm (iii)} All other positive multiplicities are equal to $1$.
\end{lemma}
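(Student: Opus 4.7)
The plan is to reverse the deletion map: for a fixed semistandard $\nu$-tableau $T$ of content $(n_1,n_2,n_3)$, the multiplicity of $T$ in the multiset equals the number of semistandard $\lambda$-tableaux of content $(n_1,n_2,n_3,1,1)$ that restrict to $T$ after the boxes containing $4$ and $5$ are removed. Since every entry of $T$ is at most $3<4$, every row- or column-inequality between an entry of $T$ and one of the new entries in $\{4,5\}$ is automatically satisfied; consequently this number depends only on the skew shape $\lambda/\nu$ and not on the particular $T$, which shows immediately that the multiplicity is a function of $\nu$ alone.

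Next, I would enumerate the possible shapes $\nu$. From $\nu\subseteq\lambda$ and $|\lambda/\nu|=2$, writing $a_i=\lambda_i-\nu_i\ge 0$ with $a_1+a_2+a_3=2$, we get the six candidates listed in (i). The parity observation $\lambda_i\equiv\lambda_j\pmod 2$, which is forced by $\lambda_i=2\mu_i+\delta$, upgrades each condition of the form ``$\lambda_i-1\ge\lambda_{i+1}$'' to the strict inequality ``$\lambda_i>\lambda_{i+1}$''. This gives the validity condition on $\nu$ in each case: e.g.\ $(\lambda_1-2,\lambda_2,\lambda_3)$ and $(\lambda_1-1,\lambda_2-1,\lambda_3)$ require $\lambda_1>\lambda_2$, while $(\lambda_1-1,\lambda_2,\lambda_3-1)$ requires $\lambda_1>\lambda_2$ \emph{and} $\lambda_3\ge 1$. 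If one of these fails, $\nu$ is not a partition and contributes nothing to the multiset.

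Finally, for each valid $\nu$ I count semistandard fillings of the two boxes of $\lambda/\nu$ by $\{4,5\}$. If both boxes lie in one row or one column, the ordering is forced, giving multiplicity $1$; otherwise both assignments are semistandard, giving multiplicity $2$. In the three single-row cases $(\lambda_1-2,\lambda_2,\lambda_3)$, $(\lambda_1,\lambda_2-2,\lambda_3)$, $(\lambda_1,\lambda_2,\lambda_3-2)$ the multiplicity is always $1$. For $\nu=(\lambda_1-1,\lambda_2-1,\lambda_3)$ the two new boxes lie in columns $\lambda_1$ and $\lambda_2$, which coincide iff $\lambda_1=\lambda_2$; hence the multiplicity is $2$ precisely when $\lambda_1>\lambda_2$. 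Symmetrically, for $\nu=(\lambda_1,\lambda_2-1,\lambda_3-1)$ the multiplicity is $2$ iff $\lambda_2>\lambda_3$. For $\nu=(\lambda_1-1,\lambda_2,\lambda_3-1)$ the boxes lie in columns $\lambda_1$ and $\lambda_3$, and the validity condition $\lambda_1>\lambda_2\ge\lambda_3$ already forces $\lambda_1>\lambda_3$, so the multiplicity is unconditionally $2$. These observations reproduce exactly the three situations with multiplicity $2$ in (ii), and the value $1$ in all remaining valid cases, proving (iii).

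The work is essentially a short case analysis; the one step that would otherwise be awkward is ruling out degenerate intermediate shapes such as $\lambda_1=\lambda_2+1$, which the common-parity property of $\lambda_1,\lambda_2,\lambda_3$ dispatches uniformly by converting each non-strict inequality $\lambda_i-1\ge\lambda_{i+1}$ into a strict one.
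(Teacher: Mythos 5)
Your argument is correct and is essentially the paper's: the paper simply enumerates the nine possible placements of the boxes containing $4$ and $5$ pictorially and reads off the counts, while you organize the same case analysis by fixing $\nu$ and counting semistandard extensions of $\lambda/\nu$, using the common parity of $\lambda_1,\lambda_2,\lambda_3$ to turn the borderline inequalities into strict ones. One slip in an illustrative aside: the condition for $\nu=(\lambda_1-1,\lambda_2-1,\lambda_3)$ to be a partition is $\lambda_2>\lambda_3$ (equivalently $\lambda_2-1\ge\lambda_3$), not $\lambda_1>\lambda_2$; this does not affect your conclusions, since parts (i)--(iii) only use the general principle that non-partitions contribute nothing, and your multiplicity count for that case (namely $2$ iff $\lambda_1>\lambda_2$) is correct.
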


\begin{proof}
If $\lambda_1-\lambda_2,\lambda_2-\lambda_3,\lambda_3\geq 2$, then the semistandard $\lambda$-tableaux of content $(n_1,n_2,n_3,1,1)$
are of the following form:

\young(\:\:\:\:\:\:\:45,\:\:\:\:\:,\:\:\:)\hskip0.5truecm
\young(\:\:\:\:\:\:\:\:\:,\:\:\:45,\:\:\:)\hskip0.5truecm
\young(\:\:\:\:\:\:\:\:\:,\:\:\:\:\:,\:45)

\young(\:\:\:\:\:\:\:\:4,\:\:\:\:5,\:\:\:)\hskip0.5truecm
\young(\:\:\:\:\:\:\:\:4,\:\:\:\:\:,\:\:5)\hskip0.5truecm
\young(\:\:\:\:\:\:\:\:\:,\:\:\:\:4,\:\:5)

\young(\:\:\:\:\:\:\:\:5,\:\:\:\:4,\:\:\:)\hskip0.5truecm
\young(\:\:\:\:\:\:\:\:5,\:\:\:\:\:,\:\:4)\hskip0.5truecm
\young(\:\:\:\:\:\:\:\:\:,\:\:\:\:5,\:\:4)

\medskip

\noindent If $\lambda_1=\lambda_2>\lambda_3$ or $\lambda_2=\lambda_3>0$, then
4 and 5  may appear in the same column,  and 4 is necessarily above 5.
These observations clearly yield the statements (i), (ii), (iii).
\end{proof}

\begin{lemma}\label{semistandard tableaux from 5 to 3}
The ${\mathrm{GL}}_3(K)$-module $(K[T_5]^{\text{\rm SO}_3(K)})^{({\mathbb N}_0,{\mathbb N}_0,{\mathbb N}_0,1,1)}$
decomposes as
\[
(K[T_5]^{\text{\rm SO}_3(K)})^{({\mathbb N}_0,{\mathbb N}_0,{\mathbb N}_0,1,1)}
=\sum_{\nu}m_{\nu}W_3(\nu),\quad \nu=(\nu_1,\nu_2,\nu_3),
\]
where:

\noindent{\rm (i)} If $\nu_1\equiv\nu_2\equiv\nu_3\text{ \rm (mod 2)}$, then
\[
m_{\nu}=\begin{cases}3,&\text{if }\nu_1>\nu_2>\nu_3;\\
2,&\text{if }\nu_1=\nu_2>\nu_3;\\
2,&\text{if }\nu_1>\nu_2=\nu_3;\\
1,&\text{if }\nu_1=\nu_2=\nu_3;\\
\end{cases}
\]
{\rm (ii)} If $\nu_1\equiv\nu_2\not\equiv\nu_3\text{ \rm (mod 2)}$, then
\[
m_{\nu}=\begin{cases}2,&\text{if }\nu_1>\nu_2;\\
1,&\text{if }\nu_1=\nu_2;\\
\end{cases}
\]
{\rm (iii)} If $\nu_1\equiv\nu_3\not\equiv\nu_2\text{ \rm (mod 2)}$, then
$m_{\nu}=2$;

\noindent {\rm (iv)} If $\nu_1\not\equiv\nu_2\equiv\nu_3\text{ \rm (mod 2)}$, then
\[
m_{\nu}=\begin{cases}2,&\text{if }\nu_2>\nu_3;\\
1,&\text{if }\nu_2=\nu_3.\\
\end{cases}
\]
\end{lemma}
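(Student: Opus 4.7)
My plan is to apply Proposition~\ref{correspondence with semistandard tableaux} (ii) with a carefully chosen map $\pi$. By Theorem~\ref{theorem of Le Bruyn} combined with Proposition~\ref{isomorphic algebras of invariants}, the algebra $K[T_5]^{\mathrm{SO}_3(K)}$ admits an $\mathbb{N}_0^5$-graded basis indexed by all semistandard $\lambda$-tableaux with $\lambda=(2\mu_1+\delta,2\mu_2+\delta,2\mu_3+\delta)$, $\delta\in\{0,1\}$. Restricting to multidegrees $(n_1,n_2,n_3,1,1)$ produces an $\mathbb{N}_0^3$-graded basis of $(K[T_5]^{\mathrm{SO}_3(K)})^{(\mathbb{N}_0,\mathbb{N}_0,\mathbb{N}_0,1,1)}$ indexed by those SSYTs whose content has exactly one $4$ and exactly one $5$.

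I would then define $\pi$ to send such a tableau to the semistandard $\nu$-tableau of content $(n_1,n_2,n_3)$ obtained by erasing the two boxes containing $4$ and $5$, where $\nu$ is the reduced shape. By Proposition~\ref{correspondence with semistandard tableaux} (ii), the multiplicity $m_\nu$ of $W_3(\nu)$ will then equal the number of preimages under $\pi$ of any fixed $\nu$-SSYT, i.e., the number of admissible ways of re-inserting a $4$ and a $5$ so that the enlarged shape $\lambda$ belongs to the Le Bruyn list. For each fixed $\lambda$, Lemma~\ref{semistandard tableaux in three rows} already counts these insertions: $\nu$ must be one of the six shapes listed in (i) relative to $\lambda$, and the insertion multiplicity is $1$ in general and $2$ in the three exceptional configurations of (ii).

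The remainder will be a bookkeeping case split by the parity pattern of $(\nu_1,\nu_2,\nu_3)$, the decisive constraint being that all three parts of $\lambda$ must share the same parity. Running through the six candidate enlargements (``add two boxes to one row'' or ``add one box each to two distinct rows'') and discarding those failing either the parity condition or the partition inequalities, one obtains: in case (i) only the three shapes $(\nu_1+2,\nu_2,\nu_3)$, $(\nu_1,\nu_2+2,\nu_3)$, $(\nu_1,\nu_2,\nu_3+2)$ survive, each contributing $1$, giving $3,2,2,1$ according to which of $\nu_1>\nu_2$ and $\nu_2>\nu_3$ are strict; in case (ii) only $\lambda=(\nu_1+1,\nu_2+1,\nu_3)$ survives, contributing $2$ when $\nu_1>\nu_2$ and $1$ when $\nu_1=\nu_2$; in case (iii) only $\lambda=(\nu_1+1,\nu_2,\nu_3+1)$ survives (the parity gap forces $\nu_2>\nu_3$ automatically, so $\lambda$ is always a partition), with multiplicity always $2$; and in case (iv), symmetric to (ii), only $\lambda=(\nu_1,\nu_2+1,\nu_3+1)$ survives with multiplicity $2$ or $1$ according as $\nu_2>\nu_3$ or $\nu_2=\nu_3$. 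These counts match the statement.

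The step I expect to demand the most care is the interplay between two distinct sets of strict-inequality conditions: the partition inequalities on $\lambda$ (which, via the parity constraint, translate into strict inequalities among the $\nu_i$) and the multiplicity-$2$ conditions coming from Lemma~\ref{semistandard tableaux in three rows}~(ii). One has to verify case by case that these conditions assemble into exactly the piecewise formula in the statement, without double-counting and without missing boundary cases such as $\nu_1=\nu_2=\nu_3$ in case (i), where one must rule out the two shapes $(\nu_1,\nu_2+2,\nu_3)$ and $(\nu_1,\nu_2,\nu_3+2)$ from being valid partitions.
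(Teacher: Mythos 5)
Your proposal is correct and follows essentially the same route as the paper's own proof: index a graded basis of $(K[T_5]^{\mathrm{SO}_3(K)})^{(\mathbb{N}_0,\mathbb{N}_0,\mathbb{N}_0,1,1)}$ by semistandard tableaux via Theorem~\ref{theorem of Le Bruyn} and Proposition~\ref{isomorphic algebras of invariants}, delete the boxes containing $4$ and $5$, and count preimages using Lemma~\ref{semistandard tableaux in three rows} together with Proposition~\ref{correspondence with semistandard tableaux}~(ii), sorting the surviving enlargements $\lambda$ by the parity constraint and the partition inequalities. Your case-by-case bookkeeping (including the boundary cases you flag) matches the paper's.
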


\begin{proof}
Combining Proposition~\ref{isomorphic algebras of invariants} and 
Theorem \ref{theorem of Le Bruyn} we obtain that as an ${\mathbb N}_0^5$-graded vector space the algebra $K[T_5]^{\text{SO}_3(K)}$
has a graded basis indexed by all semistandard $\lambda$-tableaux
for all $\lambda=(\lambda_1,\lambda_2,\lambda_3)$, such that $\lambda_1\equiv\lambda_2\equiv\lambda_3\text{ (mod 2)}$.
Hence the vector space $(K[T_5]^{\text{SO}_3(K)})^{({\mathbb N}_0,{\mathbb N}_0,{\mathbb N}_0,1,1)}$
has a basis indexed by the semistandard $\lambda$-tableaux of content $(n_1,n_2,n_3,1,1)$, $n_1,n_2,n_3\in{\mathbb N}_0$.
Deleting 4 and 5 from such a semistandard $\lambda$-tableau, we obtain the semistandard $\nu$-tableaux of content
$(n_1,n_2,n_3)$ described in Lemma \ref{semistandard tableaux in three rows}.

(i) If $\nu_1\equiv\nu_2\equiv\nu_3\text{ (mod 2)}$
and $\nu_1>\nu_2>\nu_3$ then we can obtain the $\nu$-tableau only from the corresponding $\lambda$-tableaux for
$\lambda=(\nu_1+2,\nu_2,\nu_3),(\nu_1,\nu_2+2,\nu_3),(\nu_1,\nu_2,\nu_3+2)$. 
By Proposition~\ref{correspondence with semistandard tableaux} (ii) we conclude   $m_{\nu}=3$. 
If $\nu_1=\nu_2>\nu_3$ we have to exclude the case $\lambda=(\nu_1,\nu_2+2,\nu_3)$ because $\nu_1<\nu_2+2$.
The other cases $\nu_1>\nu_2=\nu_3$ and $\nu_1=\nu_2=\nu_3$ are handled in a similar way.

(ii) If $\nu_1\equiv\nu_2\not\equiv\nu_3\text{ (mod 2)}$ and $\nu_1>\nu_2$, then we can obtain the $\nu$-tableau
from the two $\lambda$-tableaux for $\lambda=(\nu_1+1,\nu_2+1,\nu_3)$, i.e., $m_{\nu}=2$.
When $\nu_1=\nu_2$ there is only one $\lambda$-tableau $\lambda=(\nu_1+1,\nu_2+1,\nu_3)$ when 4 and 5 are in the most right column
of the $\lambda$-tableau.

The proofs of the other two cases (iii) and (iv) are similar.
\end{proof}

By \eqref{eq:F3F3}, \eqref{eq:F3E3} and Lemma~\ref{lemma:a4a5}, 
we get the following corollary: 

\begin{corollary}\label{bounds for cocharacters}
The multiplicities of the irreducible components $W_3(\nu)$ in the decomposition of
\[
F_3(M_3(K),so_3(K))\cong \sum_{\nu}m_{\nu}(M_3(K),so_3(K))W_3(\nu)
\]
are bounded from above by the integers $m_{\nu}$ in Lemma \ref{semistandard tableaux from 5 to 3}.
\end{corollary}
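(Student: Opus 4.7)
The plan is to read off the statement by chaining together the structural isomorphisms and inclusions that have been established in the excerpt. The proof is essentially a bookkeeping exercise once we identify the role of each ingredient, so the work lies in recognizing the correct sequence of maps rather than in any new construction.

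First, I would invoke \eqref{eq:F3F3} to replace $F_3(M_3(K),so_3(K))$ by the algebra ${\mathcal{F}}_3$ of generic $3\times 3$ skew-symmetric matrices; this identification is a ${\mathrm{GL}}_3(K)$-module isomorphism because the substitution $x_i\mapsto t_i$ is ${\mathrm{GL}}_3(K)$-equivariant. Next I would use the inclusion ${\mathcal{F}}_3\subseteq {\mathcal{E}}_3$ of ${\mathrm{GL}}_3(K)$-submodules, followed by the ${\mathrm{GL}}_3(K)$-module isomorphism $\iota$ of Proposition~\ref{prop:embedding} (ii), so that we land in $(K[T_3,Z]^{\mathrm{SO}_3(K)})^{(\mathbb{N}_0,\mathbb{N}_0,\mathbb{N}_0,1)}$. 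At this point I would apply the isomorphism $\mu^*$ of Lemma~\ref{lemma:a4a5} to transport everything to the target $(K[T_5]^{\mathrm{SO}_3(K)})^{(\mathbb{N}_0,\mathbb{N}_0,\mathbb{N}_0,1,1)}$, whose ${\mathrm{GL}}_3(K)$-module decomposition has just been computed in Lemma~\ref{semistandard tableaux from 5 to 3}.

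The key principle I would then invoke is that for a submodule $U$ of a semisimple (in fact, polynomial) ${\mathrm{GL}}_3(K)$-module $V$, the multiplicity of each irreducible $W_3(\nu)$ in $U$ is bounded above by its multiplicity in $V$. Applying this to the inclusion ${\mathcal{F}}_3\hookrightarrow {\mathcal{E}}_3$, composed with the ${\mathrm{GL}}_3(K)$-isomorphisms described above, gives exactly
\[
m_\nu(M_3(K),so_3(K))\le m_\nu,
\]
where on the right-hand side $m_\nu$ is the multiplicity appearing in Lemma~\ref{semistandard tableaux from 5 to 3}.

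I do not expect any real obstacle here: every piece is already in place. The only point that needs a moment of care is checking that each arrow in the chain is genuinely ${\mathrm{GL}}_3(K)$-equivariant (and not merely $K$-linear), but this is immediate from how the ${\mathrm{GL}}_3(K)$-action has been set up (acting on the first three slots $so_3(K)^{\oplus 3}$ and commuting with the ${\mathrm{SO}}_3(K)$-action and with the $K^\times$-action in Lemma~\ref{lemma:a4a5}). Hence the proof reduces to a one-line citation of \eqref{eq:F3F3}, \eqref{eq:F3E3}, Lemma~\ref{lemma:a4a5}, and Lemma~\ref{semistandard tableaux from 5 to 3}.
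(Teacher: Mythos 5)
Your argument is correct and coincides with the paper's own derivation: the corollary is obtained there exactly by chaining \eqref{eq:F3F3}, the inclusion \eqref{eq:F3E3} (i.e.\ Proposition~\ref{prop:embedding}~(ii)), Lemma~\ref{lemma:a4a5}, and Lemma~\ref{semistandard tableaux from 5 to 3}, together with the standard fact that multiplicities in a polynomial ${\mathrm{GL}}_3(K)$-submodule are bounded by those of the ambient module. Nothing is missing.
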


We turn to a construction of highest weight vectors in $K\langle X_3\rangle$ that are 
linearly independent modulo $I(M_3(K),so_3(K))$.  
As we shall see, for almost all partitions $\nu$, there exist as many of those 
as the upper bound $m_{\nu}$ in Corollary~\ref{bounds for cocharacters} for the multiplicity of $W_3(\nu)$ in $F_3(M_3(K),so_3(K))$.  
 
For a partition $\lambda=(\lambda_1,\lambda_2,\lambda_3)\vdash n$, a permutation $\varrho\in S_n$, and for certain  $q\in \{1,\dots,n\}$
we define operations $\iota_{1q},\iota_2,\iota_3$
on the highest weight vector
$w(x_1,x_2,x_3)=w_{\lambda}^{\varrho}(x_1,x_2,x_3)\in K\langle X_3\rangle$ 
that produce highest weight vectors in the degree $n+2$, $n+3$ or $n+4$  
homogeneous components of $K\langle X_3\rangle$:

\begin{itemize}
\item
If $\lambda_1>\lambda_2$ and the integer $q$ is at the $r$th position in the first row of the tableau $T_{\lambda}(\varrho)$, and $r>\lambda_2$,
then $w(x_1,x_2,x_3)$ has the form
\[
w(x_1,x_2,x_3)=\sum\pm u'x_1u'',
\]
where the summation runs on some monomials $u'$ and $u''$ of degree $q-1$ and $n-q$, respectively, and we define
\[
\iota_{1q}(w(x_1,x_2,x_3))=\sum\pm u'x_1^3u'';
\]
that is, $\iota_{1q}(w)=(w\cdot x_1^2)^\pi$, where 
\[\pi^{-1}=\left(\begin{matrix}1&\dots&q&q+1&q+2&\dots&n&n+1&n+2\\
1&\dots&q&q+3&q+4&\dots &n+2&q+1&q+2\\
\end{matrix}\right).\] 
\item
Let $\tau=(2,2)$ and let
\[
w_{(2,2)}^{(2)}(x_1,x_2)=\sum_{\sigma_1,\sigma_2\in S_2}\text{sign}(\sigma_1)\text{sign}(\sigma_2)
x_{\sigma_1(1)}x_{\sigma_2(1)}x_{\sigma_1(2)}x_{\sigma_2(2)}
\]
\[
=x_1^2x_2^2-x_1x_2^2x_1-x_2x_1^2x_2+x_2^2x_1^2
\]
be the highest weight vector corresponding to the $\tau$-tableau
\young(12,34) 
(i.e. $w_{(2,2)}^{(2)}=w_{\tau}^{\pi}=([x_1,x_2]^2)^{\pi}$ where $\pi$ is the transposition $\left(\begin{matrix}1&2&3&4\\
1&3&2&4\\
\end{matrix}\right)$).
Then we define
\[
\iota_2(w(x_1,x_2,x_3))=w(x_1,x_2,x_3)w_{(2,2)}^{(2)}(x_1,x_2).
\]
\item
We define
\[
\iota_3(w(x_1,x_2,x_3))=w(x_1,x_2,x_3)s_3(x_1,x_2,x_3).
\]
\end{itemize}

\begin{lemma}\label{adding even number of columns}
{\rm (i)} Let $\nu=(\nu_1,\nu_2,\nu_3)\vdash n$ and let $w(x_1,x_2,x_3)=w_{\nu}^{\varrho}(x_1,x_2,x_3)$
be the highest weight vector corresponding to the permutation $\varrho\in S_n$. Then the polynomials
$\iota_{1q}(w(x_1,x_2,x_3)),\iota_2(w(x_1,x_2,x_3)),\iota_3(w(x_1,x_2,x_3))$ are highest weight vectors of the form $w_{\mu}^{\sigma}$, where $\mu$ is the
partition $(\nu_1+2,\nu_2,\nu_3),(\nu_1+2,\nu_2+2,\nu_3),(\nu_1+1,\nu_2+1,\nu_3+1)$, respectively.

{\rm (ii)} 
Let $w^{(i)}(x_1,x_2,x_3)=w_{\nu}^{\varrho_i}(x_1,x_2,x_3)$, $\varrho_i\in S_n$, $i=1,\ldots,m$,
and let $\{a_1,a_2,a_3\}$ be the basis of $so_3(K)$ defined in the proof of 
Proposition~\ref{isomorphic algebras of invariants}.
If the matrices $w^{(i)}(a_1,a_2,a_3)$ are linearly independent in $M_3(K)$, then the matrices of each set
\[
\{\iota_{1q_i}(w^{(i)}(a_1,a_2,a_3))\mid i=1,\ldots,m\},\quad \{\iota_2(w^{(i)}(a_1,a_2,a_3))\mid i=1,\ldots,m\},
\]
\[
\{\iota_3(w^{(i)}(a_1,a_2,a_3))\mid i=1,\ldots,m\}\]
are also linearly independent in $M_3(K)$.
\end{lemma}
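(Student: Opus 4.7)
The plan is to express $\iota_3(w)$, $\iota_2(w)$, and $\iota_{1q}(w)$ as right-multiplications of $w$ by highest weight vectors, possibly followed by a right $S_n$-action, so that (i) follows from the standard facts on highest weight vectors recalled in Section~\ref{subsec:rep-GL}; and then to carry out direct matrix computations with the specific basis $\{a_1,a_2,a_3\}$ of $so_3(K)$ from the proof of Proposition~\ref{isomorphic algebras of invariants}, reducing (ii) to multiplication by an invertible matrix or a non-zero scalar.

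For~(i), recall that the product of two highest weight vectors is again a highest weight vector whose weight is the sum of the weights, and that the right $S_n$-action on $K\langle X_p\rangle^{(n)}$ preserves both the multidegree and the commutation with the left $\mathrm{GL}_p(K)$-action, hence preserves the highest weight vector property. Now $s_3(x_1,x_2,x_3)=w_{(1,1,1)}$, $x_1^2=w_{(2)}$, and (by the definition in the text) $w_{(2,2)}^{(2)}$ is the image of $w_{(2,2)}=s_2(x_1,x_2)^2$ under the right action of $(2\ 3)\in S_4$. It follows that $\iota_3(w)=w\cdot s_3$, $\iota_2(w)=w\cdot w_{(2,2)}^{(2)}$, and $\iota_{1q}(w)=(w\cdot x_1^2)^\pi$ are highest weight vectors of weights $(\nu_1+1,\nu_2+1,\nu_3+1)$, $(\nu_1+2,\nu_2+2,\nu_3)$, and $(\nu_1+2,\nu_2,\nu_3)$, respectively. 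Each is of the form $w_\mu^\sigma$: writing $w_\mu$ as the product of standard polynomials indexed by the columns of $\mu$ in decreasing order of length, one sees that $w_\nu\cdot s_3$, $w_\nu\cdot s_2^2$, and $w_\nu\cdot x_1^2$ agree with $w_\mu$ up to the reshuffling of the newly appended columns (of lengths $3$; $2,2$; $1,1$) into their canonical place among the columns of $\nu$; composing this reshuffling with the extension of $\varrho$ by the identity on the added positions (and with $\pi$ or $(2\ 3)$ where relevant) yields the required $\sigma$.

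For~(ii), with $a_1,a_2,a_3$ as in the proof of Proposition~\ref{isomorphic algebras of invariants}, direct computation gives $a_1^2=-e_{11}-e_{22}$, $a_2^2=-e_{11}-e_{33}$, $a_1a_2=-e_{23}$, $a_2a_1=-e_{32}$, and hence $a_1^3=-a_1$ together with
\[
w_{(2,2)}^{(2)}(a_1,a_2)=a_1^2a_2^2-a_1a_2^2a_1-a_2a_1^2a_2+a_2^2a_1^2=2e_{11}-e_{22}-e_{33},
\]
which is invertible in $M_3(K)$. Moreover $s_3(a_1,a_2,a_3)=2I$: by the weak polynomial identity $s_3(x_1,x_2,x_3)x_4=x_4s_3(x_1,x_2,x_3)$ of Theorem~\ref{theorem of Razmyslov}, the element $s_3(a_1,a_2,a_3)$ centralises the associative algebra generated by $a_1,a_2,a_3$, which equals $M_3(K)$ since $so_3(K)\cong sl_2(K)$ acts irreducibly on $K^3$; so $s_3(a_1,a_2,a_3)=cI$, and the alternating identity $\mathrm{tr}(t_{\pi(1)}t_{\pi(2)}t_{\pi(3)})=\mathrm{sign}(\pi)\mathrm{tr}(t_1t_2t_3)$ recalled before Corollary~\ref{cor:tr(xyz)}, together with $\mathrm{tr}(a_1a_2a_3)=1$, gives $\mathrm{tr}(s_3(a_1,a_2,a_3))=6$, so $c=2$.

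Combining the above, $\iota_3(w^{(i)})(a_1,a_2,a_3)=2\,w^{(i)}(a_1,a_2,a_3)$, $\iota_2(w^{(i)})(a_1,a_2,a_3)=w^{(i)}(a_1,a_2,a_3)\cdot(2e_{11}-e_{22}-e_{33})$, and $\iota_{1q}(w^{(i)})(a_1,a_2,a_3)=-w^{(i)}(a_1,a_2,a_3)$ (the single $x_1$ at position $q$ in every monomial of $w^{(i)}$ is replaced by $x_1^3$, which under the substitution evaluates to $-a_1$). In each case the assignment $w^{(i)}(a_1,a_2,a_3)\mapsto\iota_{\ast}(w^{(i)})(a_1,a_2,a_3)$ is a $K$-linear bijection of $M_3(K)$, being either scalar multiplication or right-multiplication by an invertible matrix, so it preserves the linear independence of $\{w^{(i)}(a_1,a_2,a_3)\}_{i=1}^m$. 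The main obstacle is the identification $s_3(a_1,a_2,a_3)=2I$, which genuinely uses a weak polynomial identity of $(M_3(K),so_3(K))$; the remaining steps are careful matrix bookkeeping.
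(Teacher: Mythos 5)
Your proof is correct and follows essentially the same route as the paper: part (i) via the product rule for highest weight vectors together with the compatibility of the right $S_n$-action, and part (ii) via the evaluations $a_1^3=-a_1$, $w_{(2,2)}^{(2)}(a_1,a_2)=\mathrm{diag}(2,-1,-1)$ and $s_3(a_1,a_2,a_3)=2I$, reducing everything to multiplication by a nonzero scalar or an invertible matrix. The only (harmless) deviation is that you identify $s_3(a_1,a_2,a_3)=2I$ indirectly through the weak identity and a trace computation, whereas the paper obtains it by direct matrix computation.
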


\begin{proof} (i) 
Applying $\iota_{1q}$ we insert $x_1^2$ between the $q^{\mathrm{th}}$ and $(q+1)^{\mathrm{st}}$ positions of the monomials of $w(x_1,x_2,x_3)$. 
So $\iota_{1q}(w_{\nu}^{\rho})=w_{\mu}^{\psi}$, where $\mu=(\nu_1+2,\nu_2,\nu_3)$ and the tableau $T_{\mu}(\psi)$ is obtained from the tableau $T_{\nu}(\rho)$ by adding $2$ to each entry greater than $q$, and writing $q+1$, $q+2$ in the two new boxes at the end of the first row of the Young diagram of $\mu$. 

Hence $\iota_{1q}(w(x_1,x_2,x_3))$ is a highest weight vector corresponding to the partition $(\nu_1+2,\nu_2,\nu_3)$.
Similarly, $\iota_2$ multiplies $w(x_1,x_2,x_3)$ by a 
highest weight vector of weight $(2,2)$, thus 
$\iota_2(w(x_1,x_2,x_3))$ is  a highest weight vector of weight  
$(\nu_1+2,\nu_2+2,\nu_3)$.
Finally, $\iota_3$ multiplies $w(x_1,x_2,x_3)$ by the standard polynomial $s_3(x_1,x_2,x_3)$ which is a 
highest weight vector of weight $(1,1,1)$, 
hence $\iota_3(w(x_1,x_2,x_3))$ is a highest weight vector with weight 
$(\nu_1+1,\nu_2+1,\nu_3+1)$.
It is also clear that the resulting highest weight vectors are all of the form $w_{\mu}^{\sigma}$ for some partition $\mu$ and permutation $\sigma$.

(ii) 
Direct computations show that
\[
a_1^3=\left(\begin{matrix}
0&-1&0\\
1&0&0\\
0&0&0\\
\end{matrix}\right)=-a_1,
\]
\[
w_{(2,2)}^{(2)}(a_1,a_2)=\left(\begin{matrix}
2&0&0\\
0&-1&0\\
0&0&-1\\
\end{matrix}\right),
\quad
s_3(a_1,a_2,a_3)=\left(\begin{matrix}
2&0&0\\
0&2&0\\
0&0&2\\
\end{matrix}\right).
\]
If the matrices $\iota_{1q_i}(w^{(i)}(a_1,a_2,a_3))$, $i=1,\ldots,m$, are linearly dependent, then the
equality $\iota_{1q_i}(w^{(i)}(a_1,a_2,a_3))=-w^{(i)}(a_1,a_2,a_3)$ implies the linear dependence for
$w^{(i)}(a_1,a_2,a_3)$ which is a contradiction.  Similarly, since the matrices
$w_{(2,2)}^{(2)}(a_1,a_2)$ and $s_3(a_1,a_2,a_3)$ are invertible, the linear dependence of
$\iota_2(w^{(i)}(a_1,a_2,a_3))$ and of $\iota_3(w^{(i)}(a_1,a_2,a_3))$, $i=1,\ldots,m$,
gives the linear dependence of $w^{(i)}(a_1,a_2,a_3)$.
\end{proof}

\begin{lemma}\label{linearly independent evaluations}
For each of the following partitions $\nu$ the evaluations of the highest weight vectors $w^{(i)}(a_1,a_2,a_3)$, $i=1,\ldots,m_{\nu}$,
are linearly independent if $m_{\nu}>1$ and nonzero if $m_{\nu}=1$:

{\rm (i)} For $\nu=(4,2)$ and the $\nu$-tableaux \young(1356,24), \young(1345,26), and \ \young(1236,45)
\[
w^{(1)}=[x_1,x_2]^2x_1^2,\quad w^{(2)}=[x_1,x_2](x_1^3x_2-x_2x_1^3),
\]
\[
w^{(3)}=(x_1^3x_2^2-x_1x_2x_1x_2x_1-x_2x_1^3x_2+x_2^2x_1^3)x_1;
\]

{\rm (ii)} For $\nu=(2,2)$ and the $\nu$-tableaux \young(13,24)  and \young(12,34) 
\[
w^{(1)}=[x_1,x_2]^2,\quad w^{(2)}=x_1^2x_2^2-x_1x_2^2x_1-x_2x_1^2x_2+x_2^2x_1^2;
\]

{\rm (iii)} For $\nu=(3,1,1)$ and the $\nu$-tableaux \young(145,2,3) and \  \young(134,2,5)
\[
w^{(1)}=s_3(x_1,x_2,x_3)x_1^2,\quad w^{(2)}=\sum_{\sigma\in S_3}\text{\rm sign}(\sigma)x_{\sigma(1)}x_{\sigma(2)}x_1^2x_{\sigma(3)};
\]

{\rm (iv)} For $\nu=(0)$, $w^{(1)}=1$;

{\rm (v)} For $\nu=(3,1)$ and the $\nu$-tableaux \young(134,2) and \  \young(312,4)
\[
w^{(1)}=[x_1,x_2]x_1^2,\quad w^{(2)}=x_1^2[x_1,x_2];
\]

{\rm (vi)} For $\nu=(1,1)$ and the $\nu$-tableau \young(1,2) $w^{(1)}=[x_1,x_2]$;

{\rm (vii)} For $\nu=(2,1)$ and the $\nu$-tableaux \young(13,2) and \  \young(21,3)
\[
w^{(1)}=[x_1,x_2]x_1,\quad w^{(2)}=x_1[x_1,x_2];
\]

{\rm (viii)} For $\nu=(3,2)$ and the $\nu$-tableaux \young(135,24) and \  \young(241,35)
\[
w^{(1)}=[x_1,x_2]^2x_1,\quad w^{(2)}=x_1[x_1,x_2]^2;
\]

{\rm (ix)} For $\nu=(1)$ and the $\nu$-tableau \young(1) $w^{(1)}=x_1$.
\end{lemma}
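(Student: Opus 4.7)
The plan is a case-by-case direct matrix computation. For each partition $\nu$ in the list, I would substitute the basis $(a_1,a_2,a_3)$ of $so_3(K)$ from the proof of Proposition~\ref{isomorphic algebras of invariants} into the listed polynomials $w^{(i)}$ and verify that the resulting elements of $M_3(K)$ are linearly independent (or nonzero, when $m_\nu=1$). The single-vector cases (iv), (vi), (ix) are immediate: $1\mapsto I$, $[x_1,x_2]\mapsto -a_3$, and $x_1\mapsto a_1$ are visibly nonzero.

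To keep the remaining calculations short I would first tabulate the elementary identities
\[
a_1^2=\mathrm{diag}(-1,-1,0),\quad a_2^2=\mathrm{diag}(-1,0,-1),\quad a_3^2=\mathrm{diag}(0,-1,-1),
\]
\[
[a_1,a_2]=-a_3,\quad [a_2,a_3]=-a_1,\quad [a_3,a_1]=-a_2,\quad a_1^3=-a_1,
\]
\[
s_3(a_1,a_2,a_3)=2I,\quad w_{(2,2)}^{(2)}(a_1,a_2)=\mathrm{diag}(2,-1,-1),
\]
the last two of which already appear in the proof of Lemma~\ref{adding even number of columns}. With these in hand, every monomial in $a_1,a_2,a_3$ that arises collapses in a few steps to a scalar multiple of some $a_k$ or to a matrix unit $E_{ij}$.

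The base cases (ii), (v), (vii), (viii) are then dispatched in a few lines each. For example in (vii), $[x_1,x_2]x_1\mapsto -a_3a_1=-E_{31}$ and $x_1[x_1,x_2]\mapsto -a_1a_3=-E_{13}$ live in disjoint matrix positions; cases (v) and (viii) follow the same pattern after absorbing an extra $a_1^2$ or $a_3^2$ factor. In (ii), $[a_1,a_2]^2=\mathrm{diag}(0,-1,-1)$ is plainly independent from $\mathrm{diag}(2,-1,-1)$. For (iii), $w^{(1)}\mapsto 2a_1^2$, and a short expansion of $w^{(2)}$ (using $a_1^3=-a_1$, after which several of the six signed terms vanish by direct computation) yields a diagonal matrix linearly independent from $a_1^2$.

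The main obstacle is case (i), where three degree-$6$ highest weight vectors must be shown to evaluate to linearly independent matrices. Using $a_1^3=-a_1$ to collapse $x_1^3$ substrings, together with direct verifications that products such as $a_1a_2a_1$ and $a_1a_2a_1^2$ vanish, a careful calculation gives
\[
w^{(1)}\mapsto a_3^2a_1^2=E_{22},\quad w^{(2)}\mapsto -a_3^2=E_{22}+E_{33},\quad w^{(3)}\mapsto a_1^2=-E_{11}-E_{22},
\]
three diagonal matrices whose coefficient vectors $(0,1,0)$, $(0,1,1)$ and $(-1,-1,0)$ in $K^3$ form a $3\times 3$ matrix of determinant $-1$, hence are linearly independent. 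The bookkeeping is elementary but intricate; a \cocoa{}-based verification can serve as a convenient sanity check.
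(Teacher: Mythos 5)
Your proposal is correct and is essentially identical to the paper's proof, which likewise just evaluates each $w^{(i)}$ at $(a_1,a_2,a_3)$ by direct matrix computation and observes the linear independence; your intermediate values ($E_{22}$, $E_{22}+E_{33}$, $-E_{11}-E_{22}$ in case (i), $\mathrm{diag}(-1,-1,-2)$ in case (iii), the matrix units in (v), (vii), (viii), etc.) all agree with the matrices listed in the paper. The only difference is presentational: you organize the computation around the collapsing identities $a_1^3=-a_1$, $[a_i,a_j]=-a_k$, while the paper simply states the nine resulting matrices.
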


\begin{proof}
Direct computations show that:

(i) For $\nu=(4,2)$
\[
w^{(1)}(a_1,a_2,a_3)=\left(\begin{matrix}0&0&0\\
0&1&0\\
0&0&0\\
\end{matrix}\right),\quad
w^{(2)}(a_1,a_2,a_3)=\left(\begin{matrix}0&0&0\\
0&1&0\\
0&0&1\\
\end{matrix}\right),
\]
\[
w^{(3)}(a_1,a_2,a_3)=\left(\begin{matrix}-1&0&0\\
0&-1&0\\
0&0&0\\
\end{matrix}\right);
\]

{\rm (ii)} For $\nu=(2,2)$
\[
w^{(1)}(a_1,a_2,a_3)=\left(\begin{matrix}0&0&0\\
0&-1&0\\
0&0&-1\\
\end{matrix}\right),\quad
w^{(2)}(a_1,a_2,a_3)=\left(\begin{matrix}2&0&0\\
0&-1&0\\
0&0&-1\\
\end{matrix}\right);
\]

{\rm (iii)} For $\nu=(3,1,1)$
\[
w^{(1)}(a_1,a_2,a_3)=\left(\begin{matrix}-2&0&0\\
0&-2&0\\
0&0&0\\
\end{matrix}\right),\quad w^{(2)}(a_1,a_2,a_3)=\left(\begin{matrix}-1&0&0\\
0&-1&0\\
0&0&-2\\
\end{matrix}\right);
\]

{\rm (iv)} For $\nu=(0)$
\[
w^{(1)}(a_1,a_2,a_3)=\left(\begin{matrix}1&0&0\\
0&1&0\\
0&0&1\\
\end{matrix}\right);
\]

{\rm (v)} For $\nu=(3,1)$
\[
w^{(1)}(a_1,a_2,a_3)=\left(\begin{matrix}0&0&0\\
0&0&0\\
0&-1&0\\
\end{matrix}\right),
\quad w^{(2)}(a_1,a_2,a_3)=\left(\begin{matrix}0&0&0\\
0&0&1\\
0&0&0\\
\end{matrix}\right);
\]

{\rm (vi)} For $\nu=(1,1)$
\[
w^{(1)}(a_1,a_2,a_3)=\left(\begin{matrix}0&0&0\\
0&0&-1\\
0&1&0\\
\end{matrix}\right);
\]

{\rm (vii)} For $\nu=(2,1)$
\[
w^{(1)}(a_1,a_2,a_3)=\left(\begin{matrix}0&0&0\\
0&0&0\\
-1&0&0\\
\end{matrix}\right),\quad w^{(2)}(a_1,a_2,a_3)=\left(\begin{matrix}0&0&-1\\
0&0&0\\
0&0&0\\
\end{matrix}\right);
\]

{\rm (viii)} For $\nu=(3,2)$
\[
w^{(1)}(a_1,a_2,a_3)=\left(\begin{matrix}0&0&0\\
1&0&0\\
0&0&0\\
\end{matrix}\right),\quad w^{(2)}(a_1,a_2,a_3)=\left(\begin{matrix}0&-1&0\\
0&0&0\\
0&0&0\\
\end{matrix}\right);
\]

{\rm (ix)} For $\nu=(1)$ and the $\nu$-tableau \young(1)
\[
w^{(1)}(a_1,a_2,a_3)=\left(\begin{matrix}0&1&0\\
-1&0&0\\
0&0&0\\
\end{matrix}\right).
\]
In all nine cases the matrices $w^{(i)}(a_1,a_2,a_3)$ are linearly independent if $m_{\nu}>1$
and nonzero if $m_{\nu}=1$.
\end{proof}

Now we state the main result of this section.

\begin{theorem}\label{main theorem}
Let $K$ be a field of characteristic $0$ and let $I(M_3(K),so_3(K))$ be the ideal of the weak polynomial identities
for the pair $(M_3(K),so_3(K))$. Then the cocharacter sequence of $I(M_3(K),so_3(K))$ is
\[
\chi_n(M_3(K),so_3(K))=\sum_{\nu\vdash n}m_{\nu}(M_3(K),so_3(K))\chi_{\nu},\quad\nu=(\nu_1,\nu_2,\nu_3),
\]
where the multiplicity $m_{\nu}(M_3(K),so_3(K))$  equals to $m_{\nu}$ from Lemma~\ref{semistandard tableaux from 5 to 3} for $\nu\notin\{ (2k,0,0)\mid k=1,2,\dots\}$, whereas  $m_{(2k,0,0)}(M_3(K),so_3(K))=1$.  
\end{theorem}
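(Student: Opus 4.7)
The plan is to match the upper bound $m_\nu$ of Corollary~\ref{bounds for cocharacters} with an explicit list of $m_\nu$ highest weight vectors in $K\langle X_3\rangle$ whose evaluations on $so_3(K)^{\oplus 3}$ are linearly independent in $M_3(K[T_3])$, except that for $\nu=(2k,0,0)$ the upper bound must first be sharpened. Constructing such independent evaluations is equivalent to exhibiting $m_\nu$ linearly independent highest weight vectors inside ${\mathcal{F}}_3\cong F_3(M_3(K),so_3(K))$ itself, and thus establishes the matching lower bound $m_\nu(M_3(K),so_3(K))\geq m_\nu$.

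For a non-exceptional $\nu$ I would realize $\nu=\nu^{(0)}+(2a+2b+c,2b+c,c)$ for a suitable \emph{base} $\nu^{(0)}$ coming from Lemma~\ref{linearly independent evaluations} and nonnegative integers $a,b,c$, and then apply $\iota_{1q}^a\iota_2^b\iota_3^c$ to the $m_{\nu^{(0)}}=m_\nu$ highest weight vectors produced by that lemma. By Lemma~\ref{adding even number of columns}(i) this yields highest weight vectors of weight $\nu$, and by Lemma~\ref{adding even number of columns}(ii) their evaluations at $(a_1,a_2,a_3)$ remain linearly independent, hence they descend to linearly independent elements of ${\mathcal{F}}_3$. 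The choice of base for the nine subcases is: in case (i) of Lemma~\ref{semistandard tableaux from 5 to 3} (parities all equal), $\nu^{(0)}=(4,2,0)$, $(2,2,0)$, $(3,1,1)$, or $(0)$ depending on whether $\nu_1>\nu_2>\nu_3$, $\nu_1=\nu_2>\nu_3$, $\nu_1>\nu_2=\nu_3\geq 1$, or $\nu_1=\nu_2=\nu_3$; in case (ii), $(3,1,0)$ if $\nu_1>\nu_2$ and $(1,1,0)$ if $\nu_1=\nu_2$; in case (iii), always $(2,1,0)$; in case (iv), $(3,2,0)$ if $\nu_2>\nu_3$ and $(1,0,0)$ if $\nu_2=\nu_3$. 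The parity constraints of the respective class force $a,b,c$ to be nonnegative integers. When $\nu_1=\nu_2$ one must take $a=0$ (since $\iota_{1q}$ needs $\nu_1>\nu_2$), and when $\nu_2=\nu_3$ one must take $b=0$, so the base has been chosen to accommodate these restrictions in each subcase.

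For the exceptional case $\nu=(2k,0,0)$ I would tighten the upper bound by a weight-space dimension argument. Among partitions with at most three parts, only $\lambda=(2k,0,0)$ admits a semistandard $\lambda$-tableau of content $(2k,0,0)$ (the single row filled with $1$'s), so the weight $(2k,0,0)$ subspace of $W_3(\lambda)$ vanishes unless $\lambda=(2k,0,0)$, in which case it is one-dimensional. Hence $m_{(2k,0,0)}(M_3(K),so_3(K))=\dim_K({\mathcal{F}}_3^{(2k,0,0)})$. But the multihomogeneous component ${\mathcal{F}}_3^{(2k,0,0)}$ consists of degree $2k$ elements of $K\langle t_1\rangle\subset M_3(K[T_3])$, which is just $K\cdot t_1^{2k}$, whence the multiplicity is at most $1$. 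Equality holds because $a_1^{2k}=(-1)^{k-1}a_1^2\neq 0$ (using $a_1^3=-a_1$ and $a_1^2=\mathrm{diag}(-1,-1,0)$ as recorded in the proof of Lemma~\ref{adding even number of columns}), so $t_1^{2k}$ is a non-zero element of ${\mathcal{F}}_3$.

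The main technical point is that the partitions $(2k,0,0)$ are exactly the place where the operation-based construction breaks down: the natural base $(3,1,1)$ for $\nu_1>\nu_2=\nu_3$ in case (i) cannot reach any $(2k,0,0)$, because $\iota_3$ can only raise $\nu_3$, so no second independent highest weight vector of weight $(2k,0,0)$ is produced. The weight-space dimension argument above confirms that this is not a defect of the construction but a genuine drop in the multiplicity passing from the ambient $\mathcal{E}_3$-bound to the true value in ${\mathcal{F}}_3$.
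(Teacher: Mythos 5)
Your proposal is correct and follows essentially the same route as the paper's proof: the upper bound from Corollary~\ref{bounds for cocharacters} is matched by applying the operations $\iota_{1q},\iota_2,\iota_3$ of Lemma~\ref{adding even number of columns} to the base highest weight vectors of Lemma~\ref{linearly independent evaluations}, with the one-row partitions treated separately (the paper deduces $m_{(n)}\le 1$ from the multiplicity of $W_3((n))$ in $K\langle X_3\rangle$ being $\deg\chi_{(n)}=1$, which is equivalent to your weight-space count $\dim{\mathcal{F}}_3^{(2k,0,0)}=\dim K t_1^{2k}=1$). Your explicit tabulation of the base partition and the exponents $a,b,c$ in each of the nine subcases just fills in what the paper compresses into ``all other cases are handled in a similar way.''
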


\begin{proof}
The multiplicities of the cocharacter sequence $\chi_n(M_3(K),so_3(K))$
are determined by the structure of the relatively free algebra $F_3(M_3(K),so_3(K))$
as a $\text{GL}_3(K)$-module and we shall work with the representations of $\text{GL}_3(K)$.

The case $\nu=(n)$ is trivial because the multiplicity of $W_3(n)$ in the free associative algebra $K\langle X_3\rangle$
is equal to 1 and the generator $x_1^n$ of $W_3(n)$ does not vanish in $(M_3(K),so_3(K))$.
Hence we may assume that $\nu_2>0$.
By Corollary \ref{bounds for cocharacters}
the multiplicities $m_{\nu}=m_{\nu}(M_3(K),so_3(K))$ are bounded from above by the multiplicities stated in the theorem.
Hence it is sufficient to show that for a given $\nu$ in $F_3(M_3(K),so_3(K))$ there exist at least
$m_{\nu}$ linearly independent highest weight vectors $w^{(i)}(x_1,x_2,x_3)$, $i=1,\ldots,m_{\nu}$.

Let $\nu_1\equiv\nu_2\equiv\nu_3\text{ (mod 2)}$ and $\nu_1>\nu_2>\nu_3$. Hence
\[
\nu=(\nu_3+2r_2+2r_1,\nu_3+2r_2,\nu_3),\quad r_1,r_2>0.
\]
By Lemma \ref{linearly independent evaluations} for the partition $(4,2)$
there exist three highest weight vectors $w^{(i)}(x_1,x_2,x_3)\in F_3(M_3(K),so_3(K))$, $i=1,2,3$,
with linearly independent evaluations $w^{(i)}(a_1,a_2,a_3)$, $i=1,2,3$.
Applying to them $r_1-1$ times suitable operations $\iota_{1q_i}$, $r_2-1$ times the operation $\iota_2$,
and $\nu_3$ times the operation $\iota_3$, by Lemma~\ref{adding even number of columns} we obtain three highest weight vectors for the partition $\nu$
which are linearly independent in $F_3(M_3(K),so_3(K))$.
Similarly, if $\nu_1=\nu_2>\nu_3$, i.e., $r_1=0$, $r_2>0$, we have two highest weight vectors corresponding to
the partition $(2,2)$ and apply to them $r_2-1$ times the operation $\iota_2$
and $\nu_3$ times the operation $\iota_3$. All other cases are handled in a similar way.
\end{proof}

Theorem~\ref{main theorem} gives also the cocharacter sequence of 
$I(M_3(K),\text{\rm ad}(sl_2(K)))$, thanks to the following: 

\begin{proposition} The cocharacter sequence of the ideal of weak polynomial identities of the pair $(M_3(K),\text{\rm ad}(sl_2(K)))$
is the same as the cocharacter sequence of $I(M_3(K),so_3(K))$. 
\end{proposition}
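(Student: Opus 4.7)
The strategy is to extend scalars to an algebraic closure $\bar K$ of $K$, observe that the two Lie subalgebras $\mathrm{ad}(sl_2(\bar K))$ and $so_3(\bar K)$ of $M_3(\bar K)$ are conjugate by some element of $\mathrm{GL}_3(\bar K)$, and then descend the equality of weak identity ideals back to $K$.

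First I would verify that the cocharacter sequence of $I(M_3(K),L)$ is preserved under extension of scalars for any Lie subalgebra $L\subseteq M_3(K)$. Setting $L_{\bar K}:=L\otimes_K\bar K$, a multilinear polynomial $f\in P_n\otimes K$ lies in $I(M_3(K),L)$ if and only if it vanishes on a $K$-basis of $L$, which is also a $\bar K$-basis of $L_{\bar K}$; hence $(P_n\otimes K)\cap I(M_3(K),L)=(P_n\otimes K)\cap I(M_3(\bar K),L_{\bar K})$, and by the same linear algebra
\[
(P_n\otimes \bar K)\cap I(M_3(\bar K),L_{\bar K})=\bigl((P_n\otimes K)\cap I(M_3(K),L)\bigr)\otimes_K\bar K
\]
as $S_n$-modules. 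Thus the cocharacter over $K$ agrees with the cocharacter over $\bar K$.

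Second, both $\mathrm{ad}(sl_2(\bar K))$ and $so_3(\bar K)$ are three-dimensional simple Lie subalgebras of $M_3(\bar K)$ acting irreducibly on $\bar K^3$. Over an algebraically closed field of characteristic $0$ the three-dimensional simple Lie algebra is unique up to isomorphism, and it admits a unique irreducible three-dimensional representation up to equivalence, so there exists $g\in\mathrm{GL}_3(\bar K)$ with $g\,\mathrm{ad}(sl_2(\bar K))\,g^{-1}=so_3(\bar K)$. Since $f(ga_1g^{-1},\dots,ga_ng^{-1})=g\,f(a_1,\dots,a_n)\,g^{-1}$ for every $f\in K\langle X\rangle$, conjugation preserves identical vanishing, so $I(M_3(\bar K),\mathrm{ad}(sl_2(\bar K)))=I(M_3(\bar K),so_3(\bar K))$. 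Combined with the first step, this yields the desired equality of cocharacter sequences over $K$.

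The step that requires the most care is the base change: the passage to $\bar K$ is genuinely necessary, since over a non-algebraically-closed field $K$ the Lie algebras $sl_2(K)$ and $so_3(K)$ need not be isomorphic (e.g.\ $K=\mathbb R$), and one must argue cleanly that multilinear weak identities, together with their $S_n$-module structure, survive scalar extension intact. The conjugacy statement over $\bar K$ is classical.
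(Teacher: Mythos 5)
Your proof is correct and follows the same route as the paper's: extend scalars to $\bar K$, identify the pair $(M_3(\bar K),\mathrm{ad}(sl_2(\bar K)))$ with $(M_3(\bar K),so_3(\bar K))$ via conjugation by an element of $\mathrm{GL}_3(\bar K)$, and invoke the invariance of the cocharacter sequence under extension of the characteristic zero base field. The paper states both ingredients as well known without proof, whereas you usefully supply the linear-algebra details for the base-change step and the representation-theoretic justification for the conjugacy.
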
 

\begin{proof} 
Over the algebraic closure $\bar K$ of $K$, the pair   
$(M_3(\bar K),so_3(\bar K))$ is isomorphic to the pair 
$(M_3(\bar K),\text{ad}(sl_2(\bar K)))$, and as is well known, 
the cocharacter sequence does not change on extending the characteristic zero base field. 
\end{proof} 

\begin{remark} The cocharacter sequence of the pair $(M_2(K),sl_2(K))$ was found by Procesi \cite{Procesi:2},
see also \cite[Exercise 12.6.12]{Drensky:2}: 
\[
\chi_n(M_2(K),sl_2(K))=\sum_{\lambda\vdash n}\chi_{\lambda}, \quad \lambda=(\lambda_1,\lambda_2,\lambda_3), \ n=0,1,2,\ldots.
\]
This is multiplicity free, unlike the cocharacter sequence of 
$I(M_3(\bar K),\text{ad}(sl_2(\bar K)))$ given in Theorem~\ref{main theorem}.
\end{remark}

\begin{problem}
Let $\varrho:sl_2(K)\to\text{End}_K(V_q)\cong M_q(K)$
be a $q$-dimensional irreducible representation of $sl_2(K)$, $q>2$
(or $q=\infty$).
Find the cocharacter sequence of the pair $(M_q(K),\varrho(sl_2(K)))$.
\end{problem}

\section{The difference between ${\mathcal{E}}_p$ and ${\mathcal{F}}_p$}\label{sec:embedding}

Denote by $\kappa:K\langle X_p\rangle \to {\mathcal{F}}_p$ the $K$-algebra surjection with
$x_k\mapsto t_k$, $k=1,\dots,p$.
Clearly $\ker(\kappa)=I(M_3(K),so_3(K))\cap K\langle X_p\rangle$.
We can therefore reformulate Theorem~\ref{main theorem} as follows:

\begin{theorem}\label{thm:main theorem 2}
For $p\ge 3$ we have the ${\mathrm{GL}}_p(K)$-module isomorphism
\[
{\mathcal{F}}_p\cong \bigoplus_{n=0}^{\infty}\bigoplus_{\nu=(\nu_1,\nu_2,\nu_3)\vdash n}
m_{\nu}(M_3(K),so_3(K))W_p(\nu),
\]
where the multiplicities $m_{\nu}(M_3(K),so_3(K))$ are given in Theorem~\ref{main theorem}.
For $p<3$ the summands labeled by partitions $\nu$ with more than $p$ non-zero parts have to be removed from the formula.
\end{theorem}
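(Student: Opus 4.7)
The plan is to obtain this statement as a direct reformulation of Theorem~\ref{main theorem}, using the Berele--Drensky correspondence between cocharacters and $\mathrm{GL}_p(K)$-module multiplicities. Since the hard combinatorial work has already been done in the proof of Theorem~\ref{main theorem}, what remains is essentially a translation between the two languages.

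First I would observe that the surjection $\kappa:K\langle X_p\rangle\to{\mathcal{F}}_p$ with $x_k\mapsto t_k$ is $\mathrm{GL}_p(K)$-equivariant: both source and target carry the canonical $\mathrm{GL}_p(K)$-action described in Section~\ref{subsec:rep-GL}, and the generators $x_k$ and $t_k$ transform identically under this action. Its kernel equals $K\langle X_p\rangle\cap I(M_3(K),so_3(K))$ by the very definition of weak polynomial identities, yielding a $\mathrm{GL}_p(K)$-module isomorphism ${\mathcal{F}}_p\cong F_p(M_3(K),so_3(K))$. Next I would invoke the Berele--Drensky theorem recalled in Section~\ref{subsec:wpi}: for each partition $\lambda$ with at most $p$ parts, the multiplicity of the irreducible polynomial $\mathrm{GL}_p(K)$-module $W_p(\lambda)$ in $F_p(M_3(K),so_3(K))$ coincides with the multiplicity $m_\lambda$ of $\chi_\lambda$ in the cocharacter sequence $\chi_n(M_3(K),so_3(K))$. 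For partitions with more than $p$ parts, the associated polynomial irreducible of $\mathrm{GL}_p(K)$ is zero and therefore contributes nothing.

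For $p\geq 3$, since $\dim(so_3(K))=3$, the theorem of Regev already invoked in Section~\ref{sec:main result} guarantees that only partitions with at most three parts appear with nonzero multiplicity in the cocharacter. As $3\leq p$, every such partition corresponds to a nonzero $\mathrm{GL}_p(K)$-irreducible, so the multiplicities taken verbatim from Theorem~\ref{main theorem} give the stated decomposition. For $p<3$ the same argument goes through, except that the summands indexed by partitions with more than $p$ non-zero parts must be discarded because $W_p(\nu)=0$ for such $\nu$. I do not expect a genuine obstacle: the only point requiring explicit verification is the $\mathrm{GL}_p(K)$-equivariance of $\kappa$ and the matching of labeling conventions between $\chi_\lambda$ and $W_p(\lambda)$ via Schur--Weyl duality.
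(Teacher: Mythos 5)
Your proposal is correct and follows exactly the route the paper takes: the paper introduces $\kappa$ with $\ker(\kappa)=I(M_3(K),so_3(K))\cap K\langle X_p\rangle$ and presents Theorem~\ref{thm:main theorem 2} simply as a reformulation of Theorem~\ref{main theorem} via the Berele--Drensky correspondence, which is precisely your argument (with the $p<3$ caveat handled the same way).
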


\begin{theorem}\label{thm:covariant-GL}
{\rm (i)} The ${\mathrm{GL}}_p(K)$-module ${\mathcal{E}}_p$ decomposes as
\[
{\mathcal{E}}_p\cong \sum_{\nu=(\nu_1,\nu_2,\nu_3)}m_{\nu}W_p(\nu),
\]
where the value of $m_{\nu}$ is the same as in Lemma~\ref{semistandard tableaux from 5 to 3} for $p\ge 3$;
when $p<3$, the summands labeled by partitions with more than $p$ non-zero parts are removed.

{\rm (ii)} For all $p$ we have
\[{\mathcal{E}}_p={\mathcal{F}}_p\oplus\bigoplus_{k=1}^{\infty}\langle \mathrm{tr}(t_1^{2k})I\rangle_{{\mathrm{GL}}_p(K)}\] 
(where $I$ is the $3\times 3$ identity matrix).
\end{theorem}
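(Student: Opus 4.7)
My plan addresses the two parts of Theorem~\ref{thm:covariant-GL} in turn.

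For part (i), I would combine Proposition~\ref{prop:embedding} (ii) with straightforward generalizations of Lemmas~\ref{lemma:a4a5} and \ref{semistandard tableaux from 5 to 3} from $p=3$ to arbitrary $p$. The argument of Lemma~\ref{lemma:a4a5} extends verbatim, yielding a $\mathrm{GL}_p(K)$-equivariant isomorphism
\[
(K[T_p,Z]^{\mathrm{SO}_3(K)})^{(\mathbb{N}_0,\ldots,\mathbb{N}_0,1)}
\cong (K[T_{p+2}]^{\mathrm{SO}_3(K)})^{(\mathbb{N}_0,\ldots,\mathbb{N}_0,1,1)}
\]
induced by multiplying the last two $so_3$-arguments (the $K^\times$-quotient argument is independent of $p$). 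The counting argument of Lemma~\ref{semistandard tableaux in three rows} is equally unchanged when $4,5$ are replaced by $p+1,p+2$, so the multiplicity formulas of Lemma~\ref{semistandard tableaux from 5 to 3} carry over as a $\mathrm{GL}_p(K)$-decomposition. Partitions with more than $p$ non-zero parts automatically drop out since $\nu$ has at most $3$ parts, so the case $p<3$ is also covered.

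For part (ii), I start by comparing multiplicities: Theorem~\ref{thm:main theorem 2} together with (i) shows that the multiplicities of $W_p(\nu)$ in ${\mathcal{F}}_p$ and ${\mathcal{E}}_p$ agree for every $\nu$ except $\nu=(2k,0,0)$ with $k\geq 1$, where the ${\mathcal{E}}_p$-multiplicity exceeds the ${\mathcal{F}}_p$-multiplicity by exactly one. So it suffices to exhibit, for each $k\geq 1$, a $\mathrm{GL}_p(K)$-submodule $M_k\cong W_p(2k,0,0)$ of ${\mathcal{E}}_p$ that meets ${\mathcal{F}}_p$ trivially. I take $M_k:=\langle\mathrm{tr}(t_1^{2k})I\rangle_{\mathrm{GL}_p(K)}$: the element $\mathrm{tr}(t_1^{2k})I$ lies in ${\mathcal{E}}_p$ (its entries are $\mathrm{SO}_3(K)$-invariants), has multidegree $(2k,0,\ldots,0)$, and is fixed by the upper unipotent subgroup of $\mathrm{GL}_p(K)$ (which fixes both $t_1$ and the scalar matrix $I$); therefore it is a highest weight vector of weight $(2k,0,0)$, and $M_k\cong W_p(2k,0,0)$.

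The crux is to show $\mathrm{tr}(t_1^{2k})I\notin{\mathcal{F}}_p$. The multidegree $(2k,0,\ldots,0)$ component of ${\mathcal{F}}_p=K\langle t_1,\ldots,t_p\rangle$ equals $K\cdot t_1^{2k}$. Cayley--Hamilton applied to $t_1$ (whose characteristic polynomial is $\lambda^3-\tfrac{1}{2}\mathrm{tr}(t_1^2)\lambda$, since $\mathrm{tr}(t_1)=0=\det(t_1)$) gives $t_1^3=\tfrac{1}{2}\mathrm{tr}(t_1^2)\,t_1$, and hence $t_1^{2k}=\bigl(\tfrac{1}{2}\mathrm{tr}(t_1^2)\bigr)^{k-1}t_1^2$. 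Since $t_1^2$ is not a scalar matrix, no nonzero element of $K\cdot t_1^{2k}$ is scalar, whereas $\mathrm{tr}(t_1^{2k})I$ is; thus $\mathrm{tr}(t_1^{2k})I\notin{\mathcal{F}}_p$. As $M_k$ is irreducible, $M_k\cap{\mathcal{F}}_p=0$; since the $M_k$ are pairwise non-isomorphic irreducibles, their sum is direct, and the sum with ${\mathcal{F}}_p$ is also direct. The multiplicity count from (i) then forces this internal direct sum to equal ${\mathcal{E}}_p$. No serious obstacle is expected beyond this bookkeeping and the Cayley--Hamilton reduction.
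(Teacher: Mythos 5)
Your proposal is correct and follows essentially the same route as the paper: part (i) rests on Proposition~\ref{prop:embedding}~(ii), the $K^\times$-quotient isomorphism of Lemma~\ref{lemma:a4a5} and the tableau-deletion count of Lemma~\ref{semistandard tableaux from 5 to 3} (you rerun these for general $p$, whereas the paper instead reduces to $p=3$ via Corollary~\ref{cor:weyl} --- a cosmetic difference), and part (ii) compares multiplicities with Theorem~\ref{thm:main theorem 2} and exhibits $\mathrm{tr}(t_1^{2k})I$ as a highest weight vector of weight $(2k)$ independent of $t_1^{2k}$. Your Cayley--Hamilton/scalar-versus-nonscalar argument for $\mathrm{tr}(t_1^{2k})I\notin\mathcal{F}_p$ is a valid substitute for the paper's explicit evaluation at $t_1\mapsto a_1$.
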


\begin{proof} 
Statement (i) follows from  Corollary~\ref{cor:weyl}, Proposition~\ref{prop:embedding} (ii), 
Lemma~\ref{lemma:a4a5} and Lemma~\ref{semistandard tableaux from 5 to 3}. 
Combining statement (i) with Theorem~\ref{thm:main theorem 2}  we get that the factor space 
${\mathcal{E}}_p/{\mathcal{F}}_p$ decomposes as 
\[{\mathcal{E}}_p/{\mathcal{F}}_p\cong \bigoplus_{k=1}^{\infty}W_p((2k)).\] 
The only highest weight vector in ${\mathcal{F}}_p$ with weight $(2k)$ is $t_1^{2k}$. 
In ${\mathcal{E}}_p$ we have also the highest weight vector $\mathrm{tr}(t_1^{2k})I$ with weight $(2k)$, 
and these two highest weight vectors are linearly independent over $K$, as one can easily see  by making the substitution
$t_1\mapsto \left(\begin{array}{ccc}0 & 1 & 0 \\-1 & 0 & 0 \\0 & 0 & 0\end{array}\right)$.
\end{proof}

\begin{remark} (i) The Cayley-Hamilton theorem gives that
\[
t_1^3-\frac{1}{2}\text{tr}(t_1^2)t_1=0\text{ and hence }t_1^4=\frac{1}{2}\text{tr}(t_1^2)t_1^2.
\]
This easily implies that
\[
\text{tr}(t_1^{2k})=\frac{1}{2^{k-1}}\text{tr}^k(t_1^2)=2(-1)^k((t^{(1)}_{12})^2+(t^{(1)}_{13})^2+(t^{(1)}_{23})^2)^k,\quad k\geq 1.
\]

(ii) The ${\mathrm{GL}}_p(K)$-module structure of the algebra of ${\mathrm{GL}}_2(K)$-equivariant polynomial maps $sl_2(K)^{\oplus p}\to M_2(K)$,
where ${\mathrm{GL}}_2(K)$ acts by (simultaneous) conjugation, is determined in \cite[Theorem 2.2]{Procesi:2}.
This turns out to be multiplicity free, unlike our ${\mathcal{E}}_p$.

(iii) It follows from Theorem~\ref{thm:covariant-GL} (ii) that $\mathrm{tr}(t_1t_2)t_3$
can be expressed as a $K$-linear combination of monomials in $t_1,t_2,t_3$. Indeed,
the explicit identity of this form is
\[
\mathrm{tr}(t_1t_2)\cdot t_3=t_1t_2t_3-t_2t_3t_1+t_3t_1t_2+t_2t_1t_3+t_3t_2t_1-t_1t_3t_2.
\]
\end{remark}

We close this section with  a description of the center  $C({\mathcal{E}}_p)$ and $C({\mathcal{F}}_p)$ 
of the algebra ${\mathcal{E}}_p$ and ${\mathcal{F}}_p$. 

\begin{proposition} \label{prop:central} 
For $p\ge 2$, the algebra $C({\mathcal{E}}_p)$ is isomorphic to the generic trace algebra $K[T_p]^{\text{SO}_3(K)}$. 
\end{proposition}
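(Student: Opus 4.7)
The plan is to show that $C(\mathcal{E}_p)$ coincides with the subalgebra of scalar matrices $\{f\cdot I \mid f\in K[T_p]^{\mathrm{SO}_3(K)}\}$, which is contained in $\mathcal{E}_p$ by Corollary~\ref{cor:cov-generators} and is manifestly isomorphic to $K[T_p]^{\mathrm{SO}_3(K)}$ (since multiplication by $I$ in $M_3(K[T_p])$ is injective). One inclusion is immediate: any scalar matrix in $M_3(K[T_p])$ commutes with every element of $\mathcal{E}_p\subseteq M_3(K[T_p])$, so $\{fI\mid f\in K[T_p]^{\mathrm{SO}_3(K)}\}\subseteq C(\mathcal{E}_p)$.

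For the reverse inclusion, let $z\in C(\mathcal{E}_p)$. Since $z$ commutes with each generator $t_k$ in $M_3(K[T_p])$, for every $a=(a_1,\dots,a_p)\in so_3(K)^{\oplus p}$ the evaluation $z(a)\in M_3(K)$ commutes with each $a_k$. The main step is to establish the following claim: for $p\ge 2$, there is a Zariski-dense open subset $U\subset so_3(K)^{\oplus p}$ such that for every $a\in U$ the matrices $a_1,\dots,a_p$ generate $M_3(K)$ as an associative algebra. Granted this, on $U$ the element $z(a)$ lies in the center $K\cdot I$ of $M_3(K)$. Hence the polynomial equations $z_{ii}=z_{jj}$ and $z_{ij}=0$ (for $i\ne j$) hold on the Zariski-dense set $U$, and therefore everywhere. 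This forces $z=fI$ for some $f\in K[T_p]$, and the $\mathrm{SO}_3(K)$-equivariance of $z$ immediately translates into the $\mathrm{SO}_3(K)$-invariance of $f$.

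The heart of the matter is thus the claim on generation. The condition that $a_1,\dots,a_p$ generate a subalgebra of $M_3(K)$ of dimension strictly less than $9$ is Zariski-closed (it is the vanishing of certain $9\times 9$ minors of a map whose image is contained in that subalgebra), so it suffices to exhibit a single tuple for which the associative algebra generated has full dimension $9$. Taking the basis $a_1,a_2$ from the proof of Proposition~\ref{isomorphic algebras of invariants}, a short direct calculation of the products
\[
a_1,\ a_2,\ a_1^2,\ a_2^2,\ a_1a_2,\ a_2a_1,\ a_1^2a_2,\ a_2a_1^2,\ a_1a_2^2,\ a_2^2a_1,\ I
\]
shows that they already span all of $M_3(K)$, proving the claim (for any $p\ge 2$ we just include $a_1,a_2$ among the first two coordinates of the test tuple). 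I expect this verification to be the only nontrivial ingredient; everything else is density of conjugation, centrality of scalars, and the basic correspondence between equivariance and invariance.
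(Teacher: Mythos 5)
Your proof is correct and takes essentially the same route as the paper's: there the key step is that $I,t_1,t_2,t_1^2,t_2^2,t_1t_2,t_2t_1,t_1^2t_2,t_2^2t_1$ form a basis of $M_3(\mathbb{F})$ over the fraction field $\mathbb{F}$ of $K[T_p]$ (verified by specializing to a concrete pair of the basis matrices $a_i$ of $so_3(K)$, exactly your computation), so a central element commutes with all of $M_3(\mathbb{F})$ and must be scalar. Your Zariski-density phrasing of ``two general skew-symmetric matrices generate $M_3(K)$'' is just a geometric repackaging of that fraction-field argument, and the concluding step deducing invariance of $f$ from equivariance is identical.
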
  

\begin{proof}  
Denote by $\mathbb{F}$ the field of fractions of $K[T_p]$. 
Let $a_1,a_3\in so_3(K)$ be the matrices introduced in the proof of 
Proposition~\ref{isomorphic algebras of invariants}. Then 
$I$, $a_1$, $a_3$, $a_1^2$, $a_3^2$, $a_1a_3$, $a_3a_1$, $a_1^2a_3$, $a_3^2a_1$ are linearly independent over $K$. It follows that $I$, $t_1$, $t_2$, $t_1^2$, $t_2^2$, $t_1t_2$, $t_2t_1$, $t_1^2t_2$, $t_2^2t_1$ are linearly independent over $\mathbb{F}$ in $M_3(\mathbb{F})$; indeed, otherwise we could arrange the entries of the above $9$ matrices into a  $9\times 9$ matrix whose determinant would be the zero element of $K[T_p]$,  contrary to the fact that the substitution $t_1\mapsto a_1$, $t_2\mapsto a_3$ 
in this polynomial gives a non-zero value. 
So the above $9$ monomials in $t_1,t_2$  constitute an $\mathbb{F}$-vector space basis of $M_3(\mathbb{F})$. 
Take any element $c\in   C({\mathcal{E}}_p)$. The centralizer of $c$ in $M_3(\mathbb{F})$ contains 
$t_1$ and $t_2$, hence it contains the above $\mathbb{F}$-vector space basis of $M_3(\mathbb{F})$. Consequently, $c$ is central in $M_3(\mathbb{F})$, and thus $c$ is a scalar matrix. 
So $c=f\cdot I$ for some $f\in K[T_p]$. Taking into account that $c$ gives an $\mathrm{SO}_3(K)$-equivariant map from $so_3(K)^{\oplus p}\to M_3(K)$, we get that $f$ is an element 
of the generic trace algebra. 
\end{proof}

\begin{corollary} \label{cor:C(cov_p)}
(i) As a ${\mathrm{GL}}_p(K)$-module ($p\ge 2$), $C({\mathcal{E}}_p)$   decomposes as
\begin{equation}\label{center of E}
C({\mathcal{E}}_p)\cong \bigoplus W_p(\lambda),
\end{equation}
where the summation runs on all $\lambda=(\lambda_1,\lambda_2,\lambda_3)$ such that
$\lambda_1-\lambda_2\equiv\lambda_2-\lambda_3\equiv 0\text{ (mod }2)$ for $p\ge 3$; 
for $p<3$ the terms corresponding to partitions with more than $p$ non-zero parts should be omitted. 

(ii) For $p\ge 2$ the center $C({\mathcal{F}}_p)$ of ${\mathcal{F}}_p$ is the direct sum of all
$W_p(\lambda)$ from (\ref{center of E}) such that $\lambda_2>0$.
\end{corollary}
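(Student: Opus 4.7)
The plan is to reduce the computation of $C(\mathcal{F}_p)$ to an intersection $C(\mathcal{E}_p) \cap \mathcal{F}_p$, and then to read off both (i) and (ii) from Proposition~\ref{prop:central}, Corollary~\ref{decomposition of special orthogonal invariants}, and Theorem~\ref{thm:covariant-GL}(ii).

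First I would establish the equality $C(\mathcal{F}_p) = C(\mathcal{E}_p) \cap \mathcal{F}_p$. The inclusion $\supseteq$ is immediate from $\mathcal{F}_p \subseteq \mathcal{E}_p$. For $\subseteq$, given $c \in C(\mathcal{F}_p)$, the element $c$ commutes with $t_1$ and $t_2$. Exactly as in the proof of Proposition~\ref{prop:central}, the nine elements $I, t_1, t_2, t_1^2, t_2^2, t_1t_2, t_2t_1, t_1^2t_2, t_2^2t_1$ form an $\mathbb{F}$-basis of $M_3(\mathbb{F})$ (where $\mathbb{F}$ is the fraction field of $K[T_p]$), so $c$ is central in $M_3(\mathbb{F})$ and hence a scalar matrix. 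Being a scalar matrix inside $\mathcal{E}_p$, Proposition~\ref{prop:central} places $c$ in $C(\mathcal{E}_p)$.

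For (i), I would observe that the bijection $C(\mathcal{E}_p) \to K[T_p]^{\mathrm{SO}_3(K)}$, $f \cdot I \mapsto f$, supplied by Proposition~\ref{prop:central}, is $\mathrm{GL}_p(K)$-equivariant: the $\mathrm{GL}_p(K)$-action on $M_3(K[T_p])$ is entrywise and leaves the constant matrix $I$ fixed. Combining with Corollary~\ref{decomposition of special orthogonal invariants}, which decomposes $K[T_p]^{\mathrm{SO}_3(K)}$ as a sum of $W_p(2\mu_1+\delta, 2\mu_2+\delta, 2\mu_3+\delta)$, yields (i), since $\lambda_i = 2\mu_i + \delta$ with common $\delta \in \{0,1\}$ is equivalent to $\lambda_1 - \lambda_2$ and $\lambda_2 - \lambda_3$ both being even. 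For (ii), I would combine this with Theorem~\ref{thm:covariant-GL}(ii): each submodule $\langle \mathrm{tr}(t_1^{2k}) I\rangle_{\mathrm{GL}_p(K)}$ consists of scalar matrices, is isomorphic to $W_p((2k))$, and hence is precisely the summand of $C(\mathcal{E}_p)$ from (i) indexed by the partition $(2k,0,0)$, $k \ge 1$. Removing these summands from $C(\mathcal{E}_p)$ — which by Step 1 and Theorem~\ref{thm:covariant-GL}(ii) gives exactly $C(\mathcal{F}_p)$ — leaves the summands $W_p(\lambda)$ with $\lambda_2 > 0$, as claimed.

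The main obstacle is the equality $C(\mathcal{F}_p) = C(\mathcal{E}_p) \cap \mathcal{F}_p$; its content is the fact that already $t_1, t_2$ generate, over $\mathbb{F}$, an algebra whose centralizer in $M_3(\mathbb{F})$ is reduced to the scalars. This is verified by the basis argument recalled from Proposition~\ref{prop:central}. Once this reduction is in place, the rest is bookkeeping of isotypic components already computed earlier in the paper.
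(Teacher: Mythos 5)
Your proposal is correct and follows essentially the same route as the paper: statement (i) comes from Proposition~\ref{prop:central} together with Corollary~\ref{decomposition of special orthogonal invariants}, and (ii) from (i) together with Theorem~\ref{thm:covariant-GL}~(ii). The only difference is that you make explicit the step the paper leaves implicit, namely $C({\mathcal{F}}_p)=C({\mathcal{E}}_p)\cap{\mathcal{F}}_p$ via a rerun of the nine-matrix basis argument from Proposition~\ref{prop:central} using only $t_1,t_2$ --- a worthwhile clarification, and your verification that the unique copy of $W_p((2k))$ in $C({\mathcal{E}}_p)$ is exactly $\langle\mathrm{tr}(t_1^{2k})I\rangle_{{\mathrm{GL}}_p(K)}$ is also sound.
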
 

\begin{proof} Statement (i) follows from Proposition~\ref{prop:central} and Corollary~\ref{decomposition of special orthogonal invariants}. 
Statement (ii) follows from (i) and Theorem~\ref{thm:covariant-GL} (ii). 
\end{proof}

By analogy with the notion of a weak polynomial identity one may define \emph{weak central polynomials for the pair $(R,L)$} 
as elements of the free algebra $K\langle X\rangle$ which take central values in $R$ when evaluated on $L$. 
Denote by $\chi_n^{\text{c}}(R,L)$ the $S_n$-character of  the factor space 
$P_n^{\text{c}}/(P_n^{\text{c}}\cap I(R,L))$, where $P_n^c$ is  
the space of multilinear weak central polynomials for the pair $(R,L)$, and 
call $\chi_n^{\text{c}}(R,L)$ \emph{the central cocharacter sequence  for the pair $(R,L)$}. 
We can restate the structure of $C({\mathcal{F}}_p)$ as a ${\mathrm{GL}}_p(K)$-module in the language of central cocharacter sequence as follows: 
\begin{theorem} 
\[
\chi_n^{\text{c}}(M_3(K),so_3(K))=\sum_{\lambda\vdash n}\chi_{\lambda},
\]
where $\lambda=(\lambda_1,\lambda_2,\lambda_3)$,   
$\lambda_2>0$, and 
$\lambda_1-\lambda_2\equiv\lambda_2-\lambda_3\equiv 0\text{ (mod }2)$. 
Moreover, writing $\lambda$ in the form
\[
\lambda=(2(\mu_1+\mu_2)+\lambda_3,2\mu_2+\lambda_3,\lambda_3),\text{ where }\mu_2+\lambda_3>0,
\]
the corresponding highest weight vector is 
\[
s_3^{\lambda_3}(x_1,x_2,x_3)([x_1,x_2]^2-
 \frac 13(x_1^2x_2^2-x_1x_2^2x_1-x_2x_1^2x_2+x_2^2x_1^2))^{\mu_2-1}w'_{\mu_1}(x_1,x_2),
\]
\[
w'_{\mu_1}(x_1,x_2)=[x_1,x_2]^2x_1^{2\mu_1}-[x_1,x_2](x_1^{2\mu_1+1}x_2-x_2x_1^{2\mu_1+1})
\]
\[
+(x_1^3x_2^2-x_1x_2x_1x_2x_1-x_2x_1^3x_2+x_2^2x_1^3)x_1^{2\mu_1-1},\text{ if }\mu_1,\mu_2>0;
\]
\[
s_3^{\lambda_3-1}(x_1,x_2,x_3)w''_{\mu_1}(x_1,x_2),
\]
\[
w''_{\mu_1}(x_1,x_2)=s_3(x_1,x_2,x_3)x_1^{2\mu_1}+2\sum_{\sigma\in S_3}\text{\rm sign}(\sigma)x_{\sigma(1)}x_{\sigma(2)}x_1^{2\mu_1}x_{\sigma(3)},
\text{ if }\mu_1>0, \mu_2=0; 
\]
\[
s_3^{\lambda_3}(x_1,x_2,x_3)([x_1,x_2]^2-\frac 13(x_1^2x_2^2-x_1x_2^2x_1-x_2x_1^2x_2+x_2^2x_1^2))^{\mu_2},\text{ if }\mu_1=0.
\]
\end{theorem}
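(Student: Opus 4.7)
The first assertion (the central cocharacter formula) will follow immediately from Corollary~\ref{cor:C(cov_p)} (ii) via the Berele--Drensky correspondence between polynomial ${\mathrm{GL}}_p(K)$-module multiplicities (for $p\ge 3$) and $S_n$-cocharacter multiplicities. Since every admissible $W_p(\lambda)$ appears in $C(\mathcal{F}_p)$ with multiplicity exactly one, my strategy for identifying the corresponding highest weight vectors will be to exhibit, for each such $\lambda$, one non-zero element of $K\langle X_3\rangle$ which is simultaneously a HWV of weight $\lambda$ and evaluates to a (necessarily non-zero) scalar matrix at $(a_1,a_2,a_3)$. The uniqueness of the $W_3(\lambda)$-summand inside $C(\mathcal{F}_3)$ will then force our HWV to generate this summand.

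I would build these HWVs multiplicatively, using the facts from Subsection~\ref{subsec:rep-GL} that a product of HWVs is a HWV of the sum of weights, and that a product of scalar matrices is scalar. The building blocks would be: (a) $s_3(x_1,x_2,x_3)$, which evaluates to $2I$ at weight $(1,1,1)$; (b) the combination $[x_1,x_2]^2-\frac{1}{3}(x_1^2x_2^2-x_1x_2^2x_1-x_2x_1^2x_2+x_2^2x_1^2)$ at weight $(2,2)$, whose value $-\frac{2}{3}I$ is forced by the two $\nu=(2,2)$ HWV evaluations $\mathrm{diag}(0,-1,-1)$ and $\mathrm{diag}(2,-1,-1)$ from Lemma~\ref{linearly independent evaluations} (ii); (c) the element $w'_{\mu_1}$ at weight $(2\mu_1+2,2)$, obtained as $w^{(1)}_{\mu_1}-w^{(2)}_{\mu_1}+w^{(3)}_{\mu_1}$ extending the $\nu=(4,2)$ combination from Lemma~\ref{linearly independent evaluations} (i); and (d) analogously $w''_{\mu_1}$ at weight $(2\mu_1+1,1,1)$, built from $w^{(1)}+2w^{(2)}$ at $\nu=(3,1,1)$ in Lemma~\ref{linearly independent evaluations} (iii). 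The products displayed in the theorem then have the advertised weight, and the remaining task is to check that they evaluate to non-zero scalars.

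The main obstacle will be verifying that $w'_{\mu_1}$ and $w''_{\mu_1}$ remain scalar-valued at $(a_1,a_2,a_3)$ for every $\mu_1\ge 1$, not just the base case $\mu_1=1$ handled directly in Lemma~\ref{linearly independent evaluations}. I would reduce this to the matrix identities $a_1^{2k}=(-1)^{k-1}a_1^2$ and $a_1^{2k+1}=(-1)^k a_1$ ($k\ge 1$), which show that passing from $\mu_1=1$ to general $\mu_1$ rescales each of the three (resp.\ two) summands of $w'_{\mu_1}$ (resp.\ $w''_{\mu_1}$) at $(a_1,a_2,a_3)$ by a uniform sign $(-1)^{\mu_1-1}$; hence the scalar collapse observed for $\mu_1=1$ propagates to all $\mu_1$, giving $w'_{\mu_1}(a_1,a_2,a_3)=(-1)^{\mu_1}I$ and $w''_{\mu_1}(a_1,a_2,a_3)=(-1)^{\mu_1}\cdot 4I$. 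Multiplying these blocks by suitable powers of $s_3$ and of the weight-$(2,2)$ central element would then produce, in each of the three cases distinguished by the theorem ($\mu_1,\mu_2>0$; $\mu_1>0,\mu_2=0$; $\mu_1=0$), a HWV of weight $\lambda$ whose image in $\mathcal{F}_3$ is a non-zero scalar matrix, completing the proof.
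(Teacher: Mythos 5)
Your reduction of the central cocharacter formula to Corollary~\ref{cor:C(cov_p)} (ii) is exactly the paper's first step, and your building blocks, their weights, and the evaluations (including the sign propagation driven by $a_1^3=-a_1$, which is also the engine of the paper's induction) all check out. Where you genuinely diverge is in how centrality of the displayed elements is established, and as written this step has a gap. You verify that each candidate $w$ is a highest weight vector of weight $\lambda$ whose evaluation at the \emph{single} point $(a_1,a_2,a_3)$ is a non-zero scalar matrix, and you assert that the uniqueness of the $W_3(\lambda)$-summand in $C({\mathcal{F}}_3)$ then forces the image of $w$ to generate that summand. This implication does not follow from uniqueness alone: the image of $w$ lies in the $m_\lambda$-dimensional space $V_\lambda$ of weight-$\lambda$ highest weight vectors of ${\mathcal{F}}_3$, and a priori it could equal the central highest weight vector plus a non-zero non-central element of $V_\lambda$ that happens to vanish at $(a_1,a_2,a_3)$ -- such a sum still evaluates to a non-zero scalar there without being central. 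What you must add is that evaluation at $(a_1,a_2,a_3)$ is injective on $V_\lambda$; then the preimage of $KI$ in $V_\lambda$ is at most one-dimensional, it contains the central highest weight vector, and hence your $w$ is a non-zero multiple of it. This injectivity is true and is precisely what the proof of Theorem~\ref{main theorem} supplies: the $m_\lambda$ highest weight vectors constructed there via Lemmas~\ref{adding even number of columns} and~\ref{linearly independent evaluations} have linearly independent evaluations at $(a_1,a_2,a_3)$ and therefore span $V_\lambda$. You need to cite this explicitly; ``multiplicity one in the centre'' by itself is not enough.

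For comparison, the paper avoids the single-point argument altogether: it proves directly that $w'_{\mu_1}$ and $w''_{\mu_1}$ are weak \emph{central} polynomials, i.e.\ scalar-valued at every $(b_1,b_2)\in so_3(K)^{\oplus 2}$, by reducing (over $\mathbb{R}$, using $\mathrm{SO}_3$-conjugation equivariance and homogeneity) to the case where $b_1$ is a scalar multiple of $a_1$ while $b_2$ stays arbitrary, and then running the recursion $w'_{\mu_1+1}(a_1,b_2)=-w'_{\mu_1}(a_1,b_2)$ from the base case $\mu_1=1$. Since by (the proof of) Proposition~\ref{prop:central} the centre of ${\mathcal{F}}_3$ consists exactly of the scalar-matrix-valued elements, centrality follows with no appeal to multiplicities or to the independence of evaluations. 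Your route is viable and somewhat slicker once the injectivity of the evaluation map on $V_\lambda$ is invoked, but that invocation is the missing ingredient.
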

\begin{proof} 
The statement on the central cocharacter sequence is a reformulation of  Corollary~\ref{cor:C(cov_p)} (ii). For each summand $\chi_{\lambda}$ of the central cocharacter the statement gives a multihomogeneous element of $K\langle X_3\rangle$ of 
$\mathbb{N}_0^3$-degree $\lambda$, moreover, this element is easily seen to be a highest weight vector (by the explanations in Section~\ref{subsec:rep-GL}). 
It remains to show that they are weak central polynomials for the pair $(M_3(K),so_3(K))$. 
This holds for $s_3(x_1,x_2,x_3)$ by  Theorem~\ref{theorem of Razmyslov}. 
One can check by direct computation (for example by substituting $x_i\mapsto t_i$) 
that 
$[x_1,x_2]^2-
 \frac 13(x_1^2x_2^2-x_1x_2^2x_1-x_2x_1^2x_2+x_2^2x_1^2)$ is a weak central polynomial for $(M_3(K),so_3(K))$, and that $w'_1,w''_1$ are weak central polynomials for $(M_3(K),so_3(K))$. Given that, we claim that for any $\mu_1>0$, 
 $w'_{\mu_1}(b_1,b_2)$ is a scalar matrix for any $b_1,b_2\in so_3(K)$. 
 It is sufficient to show this in the special case when $K=\mathbb{R}$, the field of real numbers. Moreover, the adjoint $\mathrm{SO}_3(\mathbb{R})$-orbit of $b_1$ contains a scalar multiple of the matrix $a_1$ (introduced in the proof of Proposition~\ref{isomorphic algebras of invariants}.  For any $g\in \mathrm{SO}_3(\mathbb{R})$ we have 
 $w'_{\mu_1}(gb_1g^{-1},gb_2g^{-1})=gw'_{\mu_1}(b_1,b_2)g^{-1}$. 
 Thus (taking into account the homogeneity of $w'_{\mu_1}$ in $x_1$) we get that 
 it is sufficient to show that $w'_{\mu_1}(a_1,b_2)$ is a scalar matrix. 
 Inspection of the explicit form of $w'_{\mu_1}(x_1,x_2)$ shows that 
 the equality $a_1^3=-a_1$ implies that for $\mu_1>0$ we have $w'_{\mu_1+1}(a_1,b_2)=-w'_{\mu_1}(a_1,b_2)$. Since $w'_1(a_1,b_2)$ is a scalar matrix, 
 we conclude that $w'_{\mu_1}(a_1,b_2)$ is a scalar matrix for all $\mu_1>0$. 
 Similar argument works for $w''_{\mu_1}$. 
\end{proof} 

\section{The module of covariants}\label{sec:covariants}

We saw above (cf. Corollary~\ref{cor:weyl}) that to a large extent, the analysis of  ${\mathcal{E}}_p$ for arbitrary $p$
can be reduced to the special case $p=3$.
Our aim is to describe ${\mathcal{E}}_3$ as a module
over the ring $K[T_3]^{\mathrm{SO}_3(K)}$.

We set
\[
C:=\iota({\mathcal{E}}_3)=(K[T_3,Z]^{\mathrm{SO}_3(K)})^{(\mathbb{N}_0,\mathbb{N}_0,\mathbb{N}_0,1)},\quad \text{ where }\iota \text{ is defined in Proposition~\ref{prop:embedding}}.
\]
As a special case of the Clebsch-Gordan rules, the space of $3\times 3$ matrices has the decomposition
\[
M_3(K)=KI\oplus so_3(K)\oplus M_3(K)^+_0
\]
as a direct sum of irreducible $\mathrm{SO}_3(K)$-invariant subspaces, where $I$ is the identity matrix
and $M_3(K)^+_0$ is the space of trace zero symmetric $3\times 3$ matrices. Accordingly we have the decomposition
\begin{equation}\label{eq:C=}
C=C_1\oplus C_2\oplus C_3
\end{equation}
into a direct sum of $K[so_3(K)^{\oplus 3}]^{\mathrm{SO}_3(K)}$-submodules (that are also ${\mathrm{GL}}_3(K)$-submodules) of $C$. Namely
\[
C_1= K[T_3]^{\mathrm{SO}_3(K)}\cdot \mathrm{tr}(z),
\]
\[
C_2= (K[T_3,Z^-]^{\mathrm{SO}_3(K)})^{(\mathbb{N}_0,\mathbb{N}_0,\mathbb{N}_0,1)},
\]
and
\[
C_3\cong  (K[T_3,Z^+_0]^{\mathrm{SO}_3(K)})^{(\mathbb{N}_0,\mathbb{N}_0,\mathbb{N}_0,1)},
\]
where
\begin{align}\label{eq:s,u}  Z^-=\{u_{ij}:=\frac 12 (z_{ij}-z_{ji}) & \mid 1\le i<j\le 3\},
\\    \notag
 Z^+_0=\{s_{ij}:=\frac 12 (z_{ij}+z_{ji}),\ & s_{kk}:=z_{kk}-\frac{1}{3}(z_{11}+z_{22}+z_{33})\mid
\\ \notag  & 1\le i\le j\le3,\ k=1,2,3\},
\end{align}
and $(\mathbb{N}_0,\mathbb{N}_0,\mathbb{N}_0,1)$ in the exponents above indicates that we take the component
consisting of the polynomials that have total degree one in the variables belonging to $Z$.

Denote by $P$ the subalgebra of $K[T_3]^{\mathrm{SO}_3(K)}$ generated by
$\mathrm{tr}(t_1^2)$, $\mathrm{tr}(t_2^2)$, $\mathrm{tr}(t_3^2)$,
$\mathrm{tr}(t_1t_2)$, $\mathrm{tr}(t_1t_3)$, $\mathrm{tr}(t_2t_3)$.
Note that the six generators are algebraically independent over $K$.
The algebra $K[T_3]^{\mathrm{SO}_3(K)}$ is a free
$P$-module of rank two, generated by $1$ and $\mathrm{tr}(t_1t_2t_3)$
(see Corollary~\ref{cor:tr(xyz)} (ii)).
It follows that the $\mathbb{N}_0^3$-graded Hilbert series (or in other words, the formal
${\mathrm{GL}}_3(K)$-character of $C_1$) is
\begin{equation}\label{eq:Hilbert_C_1}
H(C_1;\tau_1,\tau_2,\tau_3)
=\frac{1+\tau_1\tau_2\tau_3}{\prod_{1\le i\le j\le 3}(1-\tau_i\tau_j)}.
\end{equation}

\begin{proposition}\label{prop:hilbert_series}
The $\mathbb{N}_0^3$-graded Hilbert series of $C_2$ and $C_3$ are the following:
\begin{eqnarray}
\label{eq:Hilbert_C_2}
H(C_2;\tau_1,\tau_2,\tau_3)=\frac{(S_{(1)}+S_{(1,1)})(\tau_1,\tau_2,\tau_3)}{\prod_{1\le i\le j\le 3}(1-\tau_i\tau_j)}\\
\label{eq:Hilbert_C_3}
H(C_3;\tau_1,\tau_2,\tau_3)=\frac{(S_{(2)}+S_{(2,1)}
-S_{(2,2,1)}-S_{(2,2,2)})(\tau_1,\tau_2,\tau_3)}
{\prod_{1\le i\le j\le 3}(1-\tau_i\tau_j)}
\end{eqnarray}
where
\[
S_{(1)}(\tau_1,\tau_2,\tau_3)=\tau_1+\tau_2+\tau_3,
\]
\[
S_{(2)}(\tau_1,\tau_2,\tau_3)=\sum_{1\le i\le j\le 3}\tau_i\tau_j,
\]
\[
S_{(1,1)}(\tau_1,\tau_2,\tau_3)=\sum_{1\le i<j\le 3}\tau_i\tau_j,
\]
\[
S_{(2,1)}(\tau_1,\tau_2,\tau_3)=\sum_{i\neq j}\tau_i^2\tau_j +2\tau_1\tau_2\tau_3,
\]
\[
S_{(2,2,1)}(\tau_1,\tau_2,\tau_3)=\tau_1\tau_2\tau_3 S_{(1,1)}(\tau_1,\tau_2,\tau_3),
\]
\[
S_{(2,2,2)}(\tau_1,\tau_2,\tau_3)=S_{(1,1,1)}(\tau_1,\tau_2,\tau_3)^2
=(\tau_1\tau_2\tau_3)^2
\]
are Schur polynomials (the formal characters of the ${\mathrm{GL}}_3(K)$-modules
$W_3(1)$, $W_3(2)$, $W_3(1,1)$, $W_3(2,1)$, $W_3(2,2,1)$, $W_3(2,2,2)$).
\end{proposition}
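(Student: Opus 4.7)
My plan is to prove both parts using the combinatorial description of $\mathrm{SO}_3(K)$-invariants from Theorem~\ref{theorem of Le Bruyn}, Proposition~\ref{correspondence with semistandard tableaux}(ii), and Pieri's rule for Schur polynomials.

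For \eqref{eq:Hilbert_C_2}, I first use Proposition~\ref{isomorphic algebras of invariants} to identify $Z^-$ with a fourth copy of the $\mathrm{SO}_3(K)$-module $so_3(K)$, giving an isomorphism of $C_2$ with the degree-one-in-the-last-copy component of $K[Y_{4,3}]^{\mathrm{SO}_3(K)}$. By Theorem~\ref{theorem of Le Bruyn}, this component has a graded basis indexed by the semistandard $\lambda$-tableaux of content $(n_1,n_2,n_3,1)$ with $\lambda$ balanced, meaning $\lambda_1 \equiv \lambda_2 \equiv \lambda_3 \pmod 2$. Removing the box containing $4$ from such a tableau yields a semistandard $\nu$-tableau of content $(n_1,n_2,n_3)$, and Proposition~\ref{correspondence with semistandard tableaux}(ii) identifies the multiplicity of $W_3(\nu)$ in $C_2$ with the number of balanced $\lambda$ of the form $\nu+\mathrm{box}$. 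A parity analysis (adding a box in row $i$ flips only the parity of $\nu_i$, and balancedness of $\lambda$ forces this flipped parity to equal the other two) shows the multiplicity is $1$ precisely when exactly one of $\nu_1,\nu_2,\nu_3$ has parity different from the other two, and $0$ otherwise. Combining with the classical identity $\Phi(\tau) = \sum_{\lambda \text{ all even parts}} S_\lambda(\tau) = \prod_{i \le j}(1-\tau_i\tau_j)^{-1}$ and two applications of Pieri's rule (adding a single box creates exactly one odd part; adding a vertical strip of size $2$ creates exactly two odd parts), one obtains $(S_{(1)} + S_{(1,1)})\Phi = H(C_2;\tau)$, yielding \eqref{eq:Hilbert_C_2}.

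For \eqref{eq:Hilbert_C_3}, I use the analogous decomposition $K[Y_{5,3}]^{\mathrm{SO}_3(K)} = \bigoplus_{\lambda \text{ balanced}} W_5(\lambda)$ (Corollary~\ref{decomposition of special orthogonal invariants} with $p=5$), restricted to bidegree $(1,1)$ in the last two coordinates. The Littlewood--Richardson branching $W_5(\lambda)\downarrow \mathrm{GL}_3(K)\times \mathrm{GL}_2(K) = \sum_{\mu,\rho} c^\lambda_{\mu,\rho}\, W_3(\mu)\otimes W_2(\rho)$ contributes on this bidegree only through $\rho \in \{(2),(1,1)\}$, each via a one-dimensional weight space. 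The $\mathrm{SO}_3(K)$-decomposition $V_3 \otimes V_3 = \mathrm{Sym}^2 V_3 \oplus \wedge^2 V_3$ identifies the $\rho=(2)$-summand with $(K[T_3]\otimes \mathrm{Sym}^2 V_3)^{\mathrm{SO}_3(K)} = C_1\oplus C_3$ (since $\mathrm{Sym}^2 V_3 = U_0 \oplus U_2$) and the $\rho=(1,1)$-summand with $(K[T_3]\otimes \wedge^2 V_3)^{\mathrm{SO}_3(K)} = C_2$, reproducing the previous case. By Pieri $c^\lambda_{\mu,(2)}=1$ when $\lambda/\mu$ is a horizontal strip of size $2$ and $0$ otherwise, so
\[
H(C_1;\tau) + H(C_3;\tau) \;=\; \sum_{\lambda \text{ balanced}}\; \sum_{\mu:\; \lambda/\mu \text{ horizontal strip of size }2} S_\mu(\tau),
\]
which I evaluate by splitting the strip into two boxes in the same row (contributing only for balanced $\mu$) and two boxes in distinct rows at different columns (contributing only for $\mu$ with exactly two parts of one parity and the third of the opposite).

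Finally I subtract $H(C_1;\tau) = (1+\tau_1\tau_2\tau_3)\Phi$ and compare with the claimed right-hand side $(S_{(2)} + S_{(2,1)} - S_{(2,2,1)} - S_{(2,2,2)})\Phi$, which I expand using Pieri together with the three-variable identities $S_{(2,2,1)} = S_{(1,1,1)}S_{(1,1)}$, $S_{(2,2,2)} = S_{(1,1,1)}^2$, and $S_\mu\cdot S_{(1,1,1)} = S_{\mu+(1,1,1)}$. Both sides then reduce to the same Schur expansion, verifying \eqref{eq:Hilbert_C_3}. I expect this final Schur-function matching to be the main obstacle: the negative terms $-S_{(2,2,1)}\Phi - S_{(2,2,2)}\Phi$ must precisely cancel the overcounts in $(S_{(2)}+S_{(2,1)})\Phi$ at the partitions $\nu$ where the multiplicity of $W_3(\nu)$ in $C_3$ vanishes (for instance $\nu_1=\nu_2=\nu_3$ balanced, and the equality cases in Lemma~\ref{semistandard tableaux from 5 to 3}).
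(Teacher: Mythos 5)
Your proposal is correct in substance but follows a genuinely different route from the paper. The paper proves Proposition~\ref{prop:hilbert_series} analytically: it passes to $K=\mathbb{C}$, pulls the $\mathrm{SO}_3$-action back to $\mathrm{SU}_2(\mathbb{C})$, writes $H(C_2)$ and $H(C_3)$ via the Molien--Weyl and Weyl integration formulas as contour integrals over the maximal torus, and evaluates them by residues; the combinatorics of tableaux plays no role there. You instead work entirely on the combinatorial side: $C_2$ as the component of $K[T_4]^{\mathrm{SO}_3(K)}$ linear in the fourth copy, $C_1\oplus C_3$ as the $\mathrm{GL}_2$-symmetric part of $(K[T_5]^{\mathrm{SO}_3(K)})^{(\mathbb{N}_0,\mathbb{N}_0,\mathbb{N}_0,1,1)}$, combined with Theorem~\ref{theorem of Le Bruyn}, the Littlewood identity $\prod_{i\le j}(1-\tau_i\tau_j)^{-1}=\sum_{\lambda\text{ even}}S_\lambda$, and Pieri. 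This is sound and non-circular (Le Bruyn's theorem is independent of the proposition), and it buys the Schur-positive multiplicity data of $C_2$ and $C_3$ directly --- essentially inverting the logic of the paper's remark that Theorem~\ref{thm:covariant-GL}~(i) could be derived from Proposition~\ref{prop:hilbert_series}; the paper's integral, by contrast, delivers the closed rational forms with no case analysis. Two small cautions. First, your parity analysis for $C_2$ should be phrased as ``the multiplicity of $W_3(\nu)$ is $1$ iff exactly one or exactly two parts of $\nu$ are odd,'' matching $S_{(1)}\Phi$ (one odd part) plus $S_{(1,1)}\Phi$ (two odd parts); this checks out, including the partition-shape constraints, which are automatic. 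Second, the final Schur-expansion matching for \eqref{eq:Hilbert_C_3} is asserted rather than executed; it is a finite but genuinely fiddly case analysis over the parity pattern and the equalities among $\nu_1\ge\nu_2\ge\nu_3$ (the horizontal-strip interlacing conditions $\mu_i\ge\nu_{i+1}$ are exactly what kill the contributions at the equality cases), and you should carry it out explicitly. A useful cross-check: the multiplicities your branching computation yields for $C_3$ are forced to equal $m_\nu$ from Lemma~\ref{semistandard tableaux from 5 to 3} minus the multiplicities in $C_1$ and $C_2$, and one can verify case by case that these agree with the coefficients of $(S_{(2)}+S_{(2,1)}-S_{(2,2,1)}-S_{(2,2,2)})\Phi$.
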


\begin{proof} It is well known that the Hilbert series in question are independent of the characteristic zero base field $K$.
Therefore we may assume $K=\mathbb{C}$, the field of complex numbers. View the $SO_3(\mathbb{C})$-module $\mathbb{C}[T_3,Z_0^+]$ as an
$SL_2(\mathbb{C})$-module via the natural surjection $SL_2(\mathbb{C})\to SO_3(\mathbb{C})$
with kernel consisting of the $2\times 2$ identity matrix and its negative.
The maximal compact subgroup $SU_2(\mathbb{C})$ (the special unitary group) of  $SL_2(\mathbb{C})$
has the same subspace of invariants in $\mathbb{C}[T_3,Z_0^+]$ as $SL_2(\mathbb{C})$. We compute the Hilbert series of
$C_3=(\mathbb{C}[T_3,Z_0^+]^{(\mathbb{N}_0,\mathbb{N}_0,\mathbb{N}_0,1)})^{SU_2(\mathbb{C})}$ using standard methods.
Namely, it can be expressed by the Molien-Weyl formula and the Weyl integration formula
as an integral over a maximal torus of $SU_2(\mathbb{C})$ as follows. Consider the maximal torus
$\mathbb{T}=\{\left(\begin{array}{cc}\rho & 0 \\0 & \rho^{-1}\end{array}\right)\mid |\rho|=1\}$ in
$SU_2(\mathbb{C})$. The character of the multihomogenous components of
$\mathbb{C}[T_3,Z_0^+]^{(\mathbb{N}_0,\mathbb{N}_0,\mathbb{N}_0,1)}$ as a $\mathbb{T}$-module
is given by the series
\[
\frac{\rho^4+\rho^2+1+\rho^{-2}+\rho^{-4}}{\prod_{j=1}^3(1-\rho^2\tau_j)(1-\tau_j)
(1-\rho^{-2}\tau_j)}.
\]
The roots of $SU_2(\mathbb{C})$ are $\rho^2$ and $\rho^{-2}$, and the order of the Weyl group is $2$. Therefore
the Molien-Weyl formula combined with the Weyl integration formula yields
\begin{align*}H(C_3;\tau_1,\tau_2,\tau_3)=\frac{1}{2}\int_{|\rho|=1}\frac{(\rho^4+\rho^2+1+\rho^{-2}+\rho^{-4})(1-\rho^2)(1-\rho^{-2})}
{\prod_{j=1}^3(1-\rho^2\tau_j)(1-\tau_j)
(1-\rho^{-2}\tau_j)}\frac{\mathrm{d}\rho}{2\pi \mathrm i \rho}
\\ =\frac 12 \cdot \frac{1}{2\pi\mathrm{i}}
\int_{|\rho|=1}\frac{-\rho^{12}+\rho^{10}+\rho^2-1}
{\rho\prod_{j=1}^3(1-\rho^2\tau_j)(1-\tau_j)
(\rho^2-\tau_j)}\mathrm{d}\rho.
\end{align*}
The above integral can be evaluated by residue calculus. Suppose that
$\tau_1,\tau_2,\tau_3$ are non-zero complex numbers of absolute value less than $1$.
Then the integrand has poles inside the unit circle at $\rho=\pm\sqrt{\tau_k}$, $k=1,2,3$,
and at $\rho=0$.
The residue at $\pm\sqrt{\tau_k}$ is
\[
\frac{-\tau_k^6+\tau_k^5+\tau_k-1}{2\tau_k(1-\tau_k^2)(1-\tau_k)\prod_{j\in  \{1,2,3\}\setminus \{k\}}(1-\tau_k\tau_j)(1-\tau_j)(\tau_k-\tau_j)},
\]
whereas the residue of the integrand at $\rho=0$ is
\[
\frac{1}{\prod_{j=1}^3(1-\tau_j)\tau_j}.
\]
It follows that
\begin{align*}
&H(C_3;\tau_1,\tau_2,\tau_3)=\frac 12\left(\frac{1}{\prod_{j=1}^3(1-\tau_j)\tau_j}+\right.\\
 & \left. 2\sum_{k=1}^3\frac{-\tau_k^6+\tau_k^5+\tau_k-1}{2\tau_k(1-\tau_k^2)(1-\tau_k)
 \prod_{j\in  \{1,2,3\}\setminus \{k\}}(1-\tau_k\tau_j)(1-\tau_j)(\tau_k-\tau_j)} \right).
\end{align*}
Bringing to common denominator the summands on the right hand side and after some cancellations we obtain \eqref{eq:Hilbert_C_3}.

Similarly,
\begin{align*}
H(C_2;\tau_1,\tau_2,\tau_3)=\frac 12 \int_{|\rho|=1}\frac{(\rho^2+1+\rho^{-2})(1-\rho^2)(1-\rho^{-2})}{\prod_{j=1}^3(1-\rho^2\tau_j)(1-\tau_j)
(1-\rho^{-2}\tau_j)}\frac{\mathrm{d}\rho}{2\pi \mathrm i \rho}
\\=\frac 12 \cdot \frac{1}{2\pi \mathrm{i}} \int_{|\rho|=1}
\frac{-\rho^9+\rho^7+\rho^3-\rho}
{\prod_{j=1}^3(1-\rho^2\tau_j)(1-\tau_j)(\rho^2-\tau_j)}\mathrm{d}\rho
\\=\frac 12\sum_{k=1}^3\frac{-\tau_k^4+\tau_k^3+\tau_k-1}
{(1-\tau_k^2)(1-\tau_k)\prod_{j\in\{1,2,3\}\setminus \{k\}}(1-\tau_k\tau_j)(1-\tau_j)(\tau_k-\tau_j)},
\end{align*}
from which one gets \eqref{eq:Hilbert_C_2} after bringing the summands to common denominator and cancelling certain factors.
\end{proof}

\begin{remark}
It would be possible to derive Theorem~\ref{thm:covariant-GL} (i) (giving the multiplicities of the irreducible $\text{GL}_p(K)$-module
summands in ${\mathcal{E}}_p$) using Proposition~\ref{prop:hilbert_series} and Corollary~\ref{cor:weyl}.
\end{remark}

\begin{proposition}\label{prop:C_2C_3gens} Write $s$ for the symmetric trace zero matrix
whose entries above and in the diagonal are $s_{ij}$, $1\le i\le j\le 3$, and write $u$ for the
skew-symmetric matrix whose entries above the diagonal are $u_{ij}$, $1\le i<j\le 3$
(where $s_{ij}$, $u_{ij}$ were introduced in \eqref{eq:s,u}).
{\rm (i)} $\mathrm{tr}(t_1u)$ is a highest weight vector in the ${\mathrm{GL}}_3(K)$-module $C_2$
generating a ${\mathrm{GL}}_3(K)$-submodule isomorphic to $W_3(1)$.

{\rm (ii)} $\mathrm{tr}(t_1t_2u)$ is a highest weight vector in the ${\mathrm{GL}}_3(K)$-module $C_2$
generating a ${\mathrm{GL}}_3(K)$-submodule isomorphic to
$W_3(1,1)$.

{\rm (iii)} $\mathrm{tr}(t_1^2s)$ is a highest weight vector in the ${\mathrm{GL}}_3(K)$-module $C_3$
generating a ${\mathrm{GL}}_3(K)$-submodule isomorphic to
$W_3(2)$.

{\rm (iv)} $\mathrm{tr}([t_1^2,t_2]s)$ is a highest weight vector in the ${\mathrm{GL}}_3(K)$-module $C_3$
generating a ${\mathrm{GL}}_3(K)$-submodule isomorphic to
$W_3(2,1)$.
\end{proposition}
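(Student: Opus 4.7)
The plan is to verify, for each of the four vectors, that (a) it lies in the appropriate summand $C_2$ or $C_3$, (b) it is multihomogeneous of the correct $\mathbb{N}_0^3$-degree, (c) it is fixed by the unipotent upper triangular subgroup $U\subset {\mathrm{GL}}_3(K)$, and (d) it is nonzero. Once (a)--(d) are established, the conclusion follows because $C_2$ and $C_3$, as subspaces of the polynomial ${\mathrm{GL}}_3(K)$-representation $K[T_3,Z]$, are polynomial (hence completely reducible) ${\mathrm{GL}}_3(K)$-modules; a nonzero highest weight vector of weight $\lambda$ in a polynomial representation generates a submodule isomorphic to $W_3(\lambda)$.

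First I would note the structural fact underlying (a): under conjugation by $g\in \mathrm{SO}_3(K)$ the generic matrix $z$ transforms to $gzg^{-1}$, and since $g^{-1}=g^T$, taking the skew-symmetric part commutes with the action. Thus $u\mapsto gug^{-1}$ and $s\mapsto gsg^{-1}$. Therefore any expression of the form $\mathrm{tr}(f(t_1,\dots,t_p)\cdot u)$ or $\mathrm{tr}(f(t_1,\dots,t_p)\cdot s)$ is $\mathrm{SO}_3(K)$-invariant and linear in the $Z$-variables, placing it in $C_2$ or $C_3$ respectively. Multihomogeneity under the diagonal torus of ${\mathrm{GL}}_3(K)$ is immediate from the action $g\cdot t_k=\sum_l g_{lk}t_l$ (the variables $u,s$ are ${\mathrm{GL}}_3(K)$-fixed, as they depend only on $Z$): $\mathrm{tr}(t_1u)$ has weight $(1,0,0)$, $\mathrm{tr}(t_1t_2u)$ has weight $(1,1,0)$, $\mathrm{tr}(t_1^2s)$ has weight $(2,0,0)$, and $\mathrm{tr}([t_1^2,t_2]s)$ has weight $(2,1,0)$.

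For the highest weight (i.e.\ $U$-invariance) check, cases (i) and (iii) are immediate because only $t_1$ appears and $t_1$ is $U$-fixed. For (ii), the only nontrivial generator is $g_{12}$, which sends $\mathrm{tr}(t_1t_2u)$ to $\mathrm{tr}(t_1t_2u)+g_{12}\mathrm{tr}(t_1^2u)$; and $\mathrm{tr}(t_1^2u)=0$ holds because $t_1^2$ is symmetric (the transpose of the square of a skew matrix equals the square of that matrix) while $u$ is skew, so their product has vanishing trace. For (iv), under $g_{12}$ we have $[t_1^2,t_2+g_{12}t_1]=[t_1^2,t_2]$, since $[t_1^2,t_1]=0$, so the vector is preserved. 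The remaining unipotent generators $g_{13},g_{23}$ act trivially because $t_3$ does not appear.

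The main routine step is the nonvanishing (d). For this I would evaluate at the basis $a_1,a_2,a_3$ of $so_3(K)$ from Proposition~\ref{isomorphic algebras of invariants}: a direct computation shows for instance $\mathrm{tr}(a_1u)=-2u_{12}\neq 0$, $\mathrm{tr}(a_1a_2u)|_{u=a_3}\neq 0$ by a short $E_{ij}$-calculation (one finds $a_1a_2=-E_{23}$, hence $\mathrm{tr}(a_1a_2a_3)=1$), $\mathrm{tr}(a_1^2 s)=s_{33}\neq 0$ (using $a_1^2=-E_{11}-E_{22}$ and $\mathrm{tr}(s)=0$), and $\mathrm{tr}([a_1^2,a_2]s)=-2s_{13}\neq 0$. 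The hardest bookkeeping is in (iv), but there are no conceptual difficulties. Combined with the semisimplicity of $C_2$ and $C_3$ as polynomial ${\mathrm{GL}}_3(K)$-modules, this completes the proof.
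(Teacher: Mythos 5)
Your proof is correct and takes essentially the same approach as the paper: where you verify $U$-invariance, the weight, and nonvanishing directly, the paper packages the same content by noting that $f\mapsto\mathrm{tr}(f(t_1,t_2,t_3)u)$ and $f\mapsto\mathrm{tr}(f(t_1,t_2,t_3)s)$ are ${\mathrm{GL}}_3(K)$-module homomorphisms from $K\langle X_3\rangle$ and transporting the standard highest weight vectors $x_1$, $\tfrac 12[x_1,x_2]$, $x_1^2$, $[x_1^2,x_2]$. Both arguments hinge on the same identities (e.g.\ $\mathrm{tr}(Au)=0$ for $A$ symmetric) and on checking nonvanishing by evaluation, which you carry out explicitly and correctly at $a_1,a_2,a_3$.
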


\begin{proof}
The map $K\langle X_3\rangle \to K[T_3,Z_0^+]$,
$f(x_1,x_2,x_3)\mapsto \mathrm{tr}(f(t_1,t_2,t_3)s)$ is a ${\mathrm{GL}}_3(K)$-module
homomorphism. As explained in Section~\ref{subsec:rep-GL},
$x_1^2$ is a highest weight vector in  $K\langle X_3\rangle^{(2)}$ generating a ${\mathrm{GL}}_3(K)$-submodule isomorphic to $W_3(2)$, whereas
$[x_1^2,x_2]$  is a highest weight vector in  $K\langle X_3\rangle^{(3)}$ generating a ${\mathrm{GL}}_3(K)$-submodule isomorphic to $W_3(2,1)$.
Since the images of these highest weight vectors in $C_3$ are non-zero, they are also highest weight vectors as required.
Similarly, the map $K\langle X_3\rangle \to K[T_3,Z^-]$,
$f(x_1,x_2,x_3)\mapsto \mathrm{tr}(f(t_1,t_2,t_3)u)$ is a ${\mathrm{GL}}_3(K)$-module
homomorphism. Now $x_1\in K\langle X_3\rangle^{(1)}$ is a highest weight vector
generating a ${\mathrm{GL}}_3(K)$-module isomorphic to $W_3(1)$.
Also $\frac 12 (x_1x_2-x_2x_1)\in K\langle X_3\rangle^{(2)}$ is a highest weight vector
generating a ${\mathrm{GL}}_3(K)$-module isomorphic to $W_3(1,1)$, and its image in $C_2$ is $\mathrm{tr}(t_1t_2u)$,
since $t_1t_2+t_2t_1$ is a symmetric matrix, hence
$\mathrm{tr}((t_1t_2+t_2t_1)u)=0$.
\end{proof}

The ${\mathrm{GL}}_3(K)$-submodule  $\langle \mathrm{tr}(t_1u)\rangle_{{\mathrm{GL}}_3(K)}$ generated by $\mathrm{tr}(t_1u)$
has the $K$-vector space basis $\{\mathrm{tr}(t_1u),\mathrm{tr}(t_2u),\mathrm{tr}(t_3u)\}$, and
the ${\mathrm{GL}}_3(K)$-submodule  $\langle \mathrm{tr}(t_1t_2u)\rangle_{{\mathrm{GL}}_3(K)}$ generated by $\mathrm{tr}(t_1t_2u)$ has the
$K$-vector space basis $\{\mathrm{tr}(t_1t_2u),\mathrm{tr}(t_1t_3u),\mathrm{tr}(t_2t_3u)\}$.

\begin{proposition}\label{prop:C2}
$C_2$ is a rank $6$ free $P$-module generated by
$\mathrm{tr}(t_1u)$, $\mathrm{tr}(t_2u)$, $\mathrm{tr}(t_3u)$, $\mathrm{tr}(t_1t_2u)$, $\mathrm{tr}(t_1t_3u)$, $\mathrm{tr}(t_2t_3u)$.
\end{proposition}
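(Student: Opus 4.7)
My plan is to proceed in three steps: translate the claim into the language of classical vector invariants of $\mathrm{SO}_3(K)$, establish surjectivity of the evaluation map $P^{\oplus 6}\to C_2$ using the First and Second Fundamental Theorems, and conclude freeness by matching Hilbert series against Proposition~\ref{prop:hilbert_series}.

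The $\mathrm{SO}_3(K)$-equivariant isomorphism $so_3(K)\cong V_3$ from Proposition~\ref{isomorphic algebras of invariants} identifies $(t_1,t_2,t_3,u)\in so_3(K)^{\oplus 4}$ with a generic $4$-tuple $(y_1,y_2,y_3,y_4)\in V_3^{\oplus 4}$ and thus identifies $C_2$ with $(K[Y_{4,3}]^{\mathrm{SO}_3(K)})^{(\mathbb{N}_0,\mathbb{N}_0,\mathbb{N}_0,1)}$. Because $\mathrm{tr}(AB)=0$ whenever $A$ is symmetric and $B$ skew-symmetric, we have $\mathrm{tr}(t_it_ju)=\frac{1}{2}\mathrm{tr}([t_i,t_j]u)$; since the Lie bracket on $so_3(K)$ corresponds under the isomorphism to the cross product on $V_3$, up to nonzero scalar factors the six proposed generators correspond to $\langle y_i,y_4\rangle$ ($i=1,2,3$) and $\Delta_3(y_i,y_j,y_4)$ ($1\le i<j\le 3$), while $P$ corresponds to the polynomial subalgebra generated by $\langle y_i,y_j\rangle$ for $1\le i\le j\le 3$.

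Next I will invoke Theorem~\ref{first fundamental theorem}(ii): every multihomogeneous element of $C_2$ is a $K$-linear combination of monomials linear in $y_4$, each of which is either a pure product of scalar products or such a product multiplied by a single $\Delta_3$. A case analysis yields three types: (A) no $\Delta_3$ factor, so exactly one scalar product has the form $\langle y_{i_0},y_4\rangle$ with the remaining scalar products in $P$; (B1) a factor $\Delta_3(y_i,y_j,y_4)$ with $i,j\in\{1,2,3\}$ and all scalar products in $P$; (B2) the factor $\Delta_3(y_1,y_2,y_3)$ together with exactly one $\langle y_{i_0},y_4\rangle$ and the remaining scalar products in $P$. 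Types (A) and (B1) manifestly lie in $P\cdot\langle y_{i_0},y_4\rangle$ and $P\cdot\Delta_3(y_i,y_j,y_4)$, respectively. For (B2) I will apply the Plücker-type syzygy from Theorem~\ref{second fundamental theorem} with $i=i_0$ and $(j_0,j_1,j_2,j_3)=(1,2,3,4)$; isolating the $r=3$ term yields
\[
\Delta_3(y_1,y_2,y_3)\langle y_{i_0},y_4\rangle=\langle y_{i_0},y_1\rangle\Delta_3(y_2,y_3,y_4)-\langle y_{i_0},y_2\rangle\Delta_3(y_1,y_3,y_4)+\langle y_{i_0},y_3\rangle\Delta_3(y_1,y_2,y_4),
\]
so (B2) also lies in the $P$-submodule spanned by the six generators. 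This establishes surjectivity of $P^{\oplus 6}\to C_2$.

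Finally, the six generators have $\mathbb{N}_0^3$-degrees whose corresponding monomial sum equals $S_{(1)}+S_{(1,1)}$; hence the free $P$-module with generators placed in these degrees has Hilbert series $H(P)\cdot(S_{(1)}+S_{(1,1)})$, which matches $H(C_2)$ by \eqref{eq:Hilbert_C_2}. A graded surjection between $\mathbb{N}_0^3$-graded vector spaces with finite-dimensional multihomogeneous components of equal Hilbert series is necessarily an isomorphism, so the surjection constructed above is an isomorphism. The only substantive step is case (B2), where the correct instance of the SFT syzygy must be identified: this relation is essentially a Cramer-type expansion of $y_4$ in the basis $y_1,y_2,y_3$ at generic points, and thus the anticipated main obstacle is minor.
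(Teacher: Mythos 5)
Your proof is correct and follows essentially the same route as the paper: surjectivity of $P^{\oplus 6}\to C_2$ via the first and second fundamental theorems for $\mathrm{SO}_3(K)$ (your type (B2) syzygy is exactly the paper's relation \eqref{eq:a}, and the reduction of $\mathrm{tr}(t_1t_2t_3)\cdot\mathrm{tr}(t_it_ju)$ is the paper's \eqref{eq:b}), followed by the identical Hilbert series comparison with Proposition~\ref{prop:hilbert_series}. The only difference is presentational: you carry out the full case analysis in the vector-invariant model $V_3^{\oplus 4}$, whereas the paper states the corresponding two relations in trace language and cites Corollary~\ref{cor:tr(xyz)} for generation over $K[T_3]^{\mathrm{SO}_3(K)}$.
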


\begin{proof}
The fact that the above $6$ elements generate $C_2$ as a $K[T_3]^{\mathrm{SO}_3(K)}$-module is an immediate consequence of
Corollary~\ref{cor:tr(xyz)}.
The following two relations hold by Theorem~\ref{second fundamental theorem} and the proof of
Proposition~\ref{isomorphic algebras of invariants}. They (together with relations obtained by permuting $t_1,t_2,t_3$)
show that the $6$ elements in the statement in fact generate $C_2$ as a $P$-module:
\begin{equation}\label{eq:a}
\mathrm{tr}(t_1t_2t_3)\mathrm{tr}(t_1u)=\mathrm{tr}(t_1^2)\mathrm{tr}(t_2t_3u)
-\mathrm{tr}(t_1t_2)\mathrm{tr}(t_1t_3u)+\mathrm{tr}(t_1t_3)\mathrm{tr}(t_1t_2u)\end{equation}
\begin{align}\label{eq:b} \mathrm{tr}(t_1t_2t_3)\mathrm{tr}(t_1t_2u)&=
\frac{1}{8}\left(\mathrm{tr}(t_1t_3)\mathrm{tr}(t_2^2)-\mathrm{tr}(t_1t_2)\mathrm{tr}(t_2t_3)\right)\mathrm{tr}(t_1u)\\ \notag
&+\frac 18 \left(\mathrm{tr}(t_1^2)\mathrm{tr}(t_2t_3)-\mathrm{tr}(t_1t_2)\mathrm{tr}(t_1t_3)\right)\mathrm{tr}(t_2u)
\\ \notag & +\frac 18\left(\mathrm{tr}(t_1t_2)^2-\mathrm{tr}(t_1^2)\mathrm{tr}(t_2^2)\right)\mathrm{tr}(t_3u)
\end{align}
Therefore denoting by $e_1,\dots,e_6$ the standard generators of the free $P$-module $P^{\oplus 6}$, we have a $P$-module surjection
\begin{align*}\mu:P^{\oplus 6}\to C_2, \
&e_1\mapsto \mathrm{tr}(t_1u), \ e_2\mapsto \mathrm{tr}(t_2u),\ e_3\mapsto \mathrm{tr}(t_3u),\
\\ &e_4\mapsto \mathrm{tr}(t_1t_2u),\ e_5\mapsto \mathrm{tr}(t_1t_3u),\ e_6\mapsto\mathrm{tr}(t_2t_3u).
\end{align*}
This is a homomorphism of graded $P$-modules, where we endow $P^{\oplus 6}$ with the grading given by
$\deg(e_1)=\deg(e_2)=\deg(e_3)=1$ and $\deg(e_4)=\deg(e_5)=\deg(e_6)=2$,
and $C_2$ is endowed with the standard grading coming from the action of the subgroup of scalar matrices in ${\mathrm{GL}}_3(K)$.
The Hilbert series of $P^{\oplus 6}$ is $\frac{3\tau+3\tau^2}{(1-\tau^2)^6}$, and by
Proposition~\ref{prop:hilbert_series} this agrees with the Hilbert series of $C_2$.
It follows that $\mu$ is an isomorphism.
\end{proof}

The ${\mathrm{GL}}_3(K)$-submodule  $\langle \mathrm{tr}(t_1^2s)\rangle_{{\mathrm{GL}}_3(K)}$ generated by $\mathrm{tr}(t_1^2s)$ has the basis
\begin{equation}\label{eq:e_ij}
e_{ij}:=\mathrm{tr}(t_it_js), \ 1\le i \le j\le 3.
\end{equation}
The ${\mathrm{GL}}_3(K)$-submodule  $\langle \mathrm{tr}([t_1^2,t_2]s)\rangle_{{\mathrm{GL}}_3(K)}$ generated by $\mathrm{tr}([t_1^2,t_2]s)$ has the basis
\begin{eqnarray}\label{eq:e_iij}&e_{iij}:=\mathrm{tr}([t_i^2,t_j]s), \ i\neq j\in\{1,2,3\},\\  \notag
& e_{132}:=\mathrm{tr}([t_1t_3+t_3t_1,t_2]s), \
e_{123}:=\mathrm{tr}([t_1t_2+t_2t_1,t_3]s).
\end{eqnarray}

\begin{theorem}\label{thm:C3}
{\rm (i)} As a $P$-module, $C_3$ is generated by
\[
\{e_{iij},e_{132},e_{123},e_{kl}\mid i\neq j\in\{1,2,3\},\ 1\le k\le l\le 3\}.
\]
Moreover, it has the direct sum decomposition
\[
C_3=C_3^{(0)}\oplus C_3^{(1)},\ \text{ where }\
C_3^{(0)}=P\cdot \langle e_{11}\rangle_{{\mathrm{GL}}_3(K)},\
C_3^{(1)}=P\cdot \langle e_{112}\rangle_{{\mathrm{GL}}_3(K)}.
\]

{\rm (ii)} The $P$-module $C_3^{(0)}$ has the free resolution
\[
0\longrightarrow P\stackrel{\psi^{(0)}}\longrightarrow P^{\oplus 6}\stackrel{\varphi^{(0)}}\longrightarrow  C_3^{(0)}\longrightarrow 0
\]
where denoting by $e_1,e_2,e_3,e_4,e_5,e_6$ the standard generators of $P^6$,
$\varphi^{(0)}$ is the $P$-module homomorphism given by
\[
\varphi^{(0)}: e_1\mapsto e_{11}, e_2\mapsto e_{12}, e_3\mapsto e_{13}, e_4\mapsto e_{22}, e_5\mapsto e_{23}, e_6\mapsto e_{33},
\]
and $\psi^{(0)}$ maps the generator of the rank one $P$-module $P$ to
\begin{equation}\label{eq:ker phi^0}
\left(\begin{array}{c}\frac 12(\mathrm{tr}(t_2^2)\mathrm{tr}(t_3^2)-\mathrm{tr}(t_2t_3)^2) \\\mathrm{tr}(t_1t_3)\mathrm{tr}(t_2t_3)-\mathrm{tr}(t_1t_2)\mathrm{tr}(t_3^2) \\
\mathrm{tr}(t_1t_2)\mathrm{tr}(t_2t_3)-\mathrm{tr}(t_1t_3)\mathrm{tr}(t_2^2) \\\frac 12(\mathrm{tr}(t_1^2)\mathrm{tr}(t_3^2)-\mathrm{tr}(t_1t_3)^2) \\
\mathrm{tr}(t_1t_2)\mathrm{tr}(t_1t_3)-\mathrm{tr}(t_1^2)\mathrm{tr}(t_2t_3) \\\frac 12(\mathrm{tr}(t_1^2)\mathrm{tr}(t_2^2)-\mathrm{tr}(t_1t_2)^2)\end{array}\right)\in P^{\oplus 6}.
\end{equation}

{\rm (iii)} The $P$-module $C_3^{(1)}$ has the free resolution
\[
0\longrightarrow P^{\oplus 3}\stackrel{\psi^{(1)}}\longrightarrow
P^{\oplus 8}\stackrel{\varphi^{(1)}}\longrightarrow  C_3^{(1)}\longrightarrow 0
\]
where denoting by $e_1,e_2,e_3,e_4,e_5,e_6,e_7,e_8$ the standard generators of $P^8$,
$\varphi^{(1)}$ is the $P$-module homomorphism given by
\begin{eqnarray*}
\varphi^{(1)}:& e_1\mapsto e_{112}, e_2\mapsto e_{221}, e_3\mapsto e_{113}, e_4\mapsto e_{331},\\
& e_5\mapsto e_{223}, e_6\mapsto e_{332},e_7\mapsto e_{132},
e_8\mapsto e_{123}
\end{eqnarray*}
and $\psi^{(1)}:P^{\oplus 3}\to P^{\oplus 8}$ is given by the matrix below:
\begin{equation}\label{eq:ker phi^1}
\left(\begin{array}{ccc}\mathrm{tr}(t_2t_3) & 0 & -\mathrm{tr}(t_3^2) \\\mathrm{tr}(t_1t_3) & -\mathrm{tr}(t_3^2) & 0 \\-\mathrm{tr}(t_2^2) & 0 & \mathrm{tr}(t_2t_3) \\
0 & -\mathrm{tr}(t_2^2) & \mathrm{tr}(t_1t_2) \\-\mathrm{tr}(t_1^2) & \mathrm{tr}(t_1t_3) & 0 \\0 & \mathrm{tr}(t_1t_2) & -\mathrm{tr}(t_1^2) \\
0 & -\mathrm{tr}(t_2t_3) & \mathrm{tr}(t_1t_3) \\\mathrm{tr}(t_1t_2) & -\mathrm{tr}(t_2t_3) & 0\end{array}\right)
\in P^{8\times 3}
\end{equation}
\end{theorem}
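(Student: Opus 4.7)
The plan combines three ingredients: identifying the ${\mathrm{GL}}_3(K)$-module structure of the two cyclic $P$-submodules $P\cdot\langle e_{11}\rangle_{{\mathrm{GL}}_3(K)}$ and $P\cdot\langle e_{112}\rangle_{{\mathrm{GL}}_3(K)}$; directly verifying the syzygies encoded in \eqref{eq:ker phi^0} and \eqref{eq:ker phi^1}; and closing the argument by a Hilbert series comparison based on Proposition~\ref{prop:hilbert_series}.

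For part (i), Proposition~\ref{prop:C_2C_3gens} identifies $\langle e_{11}\rangle_{{\mathrm{GL}}_3(K)}\cong W_3(2)$ with $K$-basis $\{e_{ij}\mid 1\le i\le j\le 3\}$ and $\langle e_{112}\rangle_{{\mathrm{GL}}_3(K)}\cong W_3(2,1)$ with $K$-basis $\{e_{iij},e_{132},e_{123}\}$. Since $P$ is generated in total $T$-degree $2$, the submodule $P\cdot\langle e_{11}\rangle_{{\mathrm{GL}}_3(K)}$ lies in even total $T$-degree while $P\cdot\langle e_{112}\rangle_{{\mathrm{GL}}_3(K)}$ lies in odd total $T$-degree, yielding $C_3^{(0)}\cap C_3^{(1)}=0$ for free. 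For the $P$-module generation claim, I would pass through the isomorphism $\mu^*$ of Lemma~\ref{lemma:a4a5}: by Corollary~\ref{cor:tr(xyz)}~(i) applied with $p=5$, the $(t_4,t_5)$-bilinear component of $K[T_5]^{{\mathrm{SO}}_3(K)}$ is $P$-spanned by products of quadratic and cubic traces containing $t_4$ and $t_5$ together exactly once, and each such generator pulls back under $\mu^*$ to a $P$-multiple of some ${\mathrm{GL}}_3(K)$-translate of $e_{11}$ or $e_{112}$.

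For parts (ii) and (iii), by ${\mathrm{GL}}_3(K)$-equivariance it suffices to check the highest weight syzygy in each case. The coefficients in \eqref{eq:ker phi^0} are, up to a common scalar, the cofactors of the Gram matrix $G=(-\tfrac12\mathrm{tr}(t_it_j))_{i,j=1}^3$, and the identity $\sum_{i,j}\mathrm{adj}(G)_{ij}\mathrm{tr}(t_it_js)=0$ in $C_3$ follows from the matrix identity $\sum_{i,j}\mathrm{adj}(G)_{ij}t_it_j=c\cdot I$ for some $c\in K[T_3]$, combined with $\mathrm{tr}(s)=0$. This matrix identity reduces to $\mathrm{adj}(G)\cdot G=\det(G)\cdot I$ via the cross-product description $ab=\mathrm{vec}(b)\mathrm{vec}(a)^T-\langle a,b\rangle I$ valid for $a,b\in so_3(K)$, and it is also available from the second fundamental theorem (Theorem~\ref{second fundamental theorem}) or equivalently from the Cayley--Hamilton identity $t^3=\tfrac12\mathrm{tr}(t^2)\,t$. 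The first column of \eqref{eq:ker phi^1} is the highest weight vector (of weight $(2,2,1)$) of a $W_3(2,2,1)$-orbit of syzygies and is verified by an analogous direct computation. Surjectivity of $\varphi^{(0)}$ and $\varphi^{(1)}$ then follows from (i).

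The final step is a Hilbert series comparison. Proposition~\ref{prop:hilbert_series} combined with the parity decomposition gives
\begin{align*}
H(C_3^{(0)};\tau)&=H(P;\tau)\cdot(S_{(2)}-S_{(2,2,2)}),\\
H(C_3^{(1)};\tau)&=H(P;\tau)\cdot(S_{(2,1)}-S_{(2,2,1)}).
\end{align*}
The source of $\psi^{(0)}$ has Hilbert series $H(P;\tau)\cdot S_{(2,2,2)}$ (single generator in multidegree $(2,2,2)$), and the source of $\psi^{(1)}$ has Hilbert series $H(P;\tau)\cdot S_{(2,2,1)}$ (three generators in multidegrees $(2,2,1)$, $(2,1,2)$, $(1,2,2)$, which are exactly the weights of $W_3(2,2,1)$). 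Injectivity of $\psi^{(0)}$ is automatic since $P$ is an integral domain and $\psi^{(0)}(1)\ne 0$; injectivity of $\psi^{(1)}$ follows from the non-vanishing in $P$ of the $3\times 3$ minor of \eqref{eq:ker phi^1} formed by its first three rows. Together with surjectivity of $\varphi^{(0)},\varphi^{(1)}$ and the complex property $\varphi\circ\psi=0$, the matching Hilbert series force exactness of both complexes. The principal obstacle is the explicit verification of the syzygies, especially \eqref{eq:ker phi^1}, as the associated matrix computation is longer than the one behind \eqref{eq:ker phi^0}; the remainder is essentially representation-theoretic bookkeeping.
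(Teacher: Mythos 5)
Your treatment of parts (ii) and (iii) is essentially the paper's own argument: verify the displayed syzygies, get injectivity of $\psi^{(0)}$ and $\psi^{(1)}$ (the latter from a non-vanishing $3\times 3$ minor), and force exactness by comparing Hilbert series via Proposition~\ref{prop:hilbert_series}. Your observation that the entries of \eqref{eq:ker phi^0} are (up to the factor $\tfrac12$ on the diagonal) the entries of the adjugate of the Gram matrix $(\mathrm{tr}(t_it_j))_{i,j}$, so that the syzygy reduces to $N\,\mathrm{adj}(N)=\det(N)I$ under the identification of $so_3(K)$ with $K^3$, is a nicer verification than the computer calculation used in the paper; the analogous check of the first column of \eqref{eq:ker phi^1}, which you defer, is indeed just a finite computation.

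The genuine gap is in part (i), which is actually the bulk of the proof. A monomial in the generators of $K[T_5]^{\mathrm{SO}_3(K)}$ from Corollary~\ref{cor:tr(xyz)} (i) that is bilinear in $(t_4,t_5)$ factors as $M_0M'$, where $M'$ is the part containing $t_4$ and $t_5$ and $M_0$ is a monomial in the generators involving only $t_1,t_2,t_3$; the $M_0$ exhaust $K[T_3]^{\mathrm{SO}_3(K)}=P\oplus P\cdot\mathrm{tr}(t_1t_2t_3)$, \emph{not} $P$. So your argument only shows that $C_3$ is generated over $K[T_3]^{\mathrm{SO}_3(K)}$ by the (pullbacks of the) finitely many $M'$. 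To descend to $P$-module generation one must additionally prove $\mathrm{tr}(t_1t_2t_3)\cdot e_{11}\in P\cdot\langle e_{112}\rangle_{{\mathrm{GL}}_3(K)}$ and $\mathrm{tr}(t_1t_2t_3)\cdot e_{112}\in P\cdot\langle e_{11}\rangle_{{\mathrm{GL}}_3(K)}$ --- the explicit identities \eqref{eq:c} and \eqref{eq:d} of the paper, which are computations of the same nature and difficulty as the syzygies you postpone. Moreover, among the $M'$ there are products such as $\mathrm{tr}(t_it_jt_4)\mathrm{tr}(t_kt_lt_5)$, whose pullbacks are covariants of degree $4$ in $t_1,t_2,t_3$; placing these in $P\cdot\langle e_{11}\rangle_{{\mathrm{GL}}_3(K)}+P\cdot\langle e_{112}\rangle_{{\mathrm{GL}}_3(K)}$ requires a further identity (the paper's \eqref{eq:tr(t_1t_2t_3t_4s)} together with the reduction of longer traces). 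Finally, the Hilbert series endgame cannot absorb this omission: the verified syzygies and injectivity of the $\psi$'s give only the coefficientwise inequalities $H(C_3^{(0)})\le H(P)(S_{(2)}-S_{(2,2,2)})$ and $H(C_3^{(1)})\le H(P)(S_{(2,1)}-S_{(2,2,1)})$, while the directness of the sum gives $H(C_3^{(0)})+H(C_3^{(1)})\le H(C_3)$; all three inequalities point in the same direction, so none can be promoted to an equality without first proving $C_3=C_3^{(0)}\oplus C_3^{(1)}$, i.e.\ part (i) itself.
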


\begin{proof} (i)
$C_3$ is spanned as a $K$-vector space by products
\[
\mathrm{tr}(t_{i_1}\cdots t_{i_k})\cdots\mathrm{tr}(t_{j_1}\cdots t_{j_l})\mathrm{tr}(t_{a_1}\cdots t_{a_m}s)
\]
by Theorem~\ref{orthogonal trace invariants} and
Theorem~\ref{special orthogonal trace invariants}.
For $k\ge 4$, $\mathrm{tr}(t_{i_1}\cdots t_{i_k})$ can be expressed as a polynomial in
$\mathrm{tr}(t_it_j)$ and $\mathrm{tr}(t_it_jt_k)$ by 
Corollary~\ref{cor:tr(xyz)} (ii). 
Moreover, $\mathrm{tr}(t_it_jt_k)$ is non-zero only if
$i,j,k$ are distinct.

\medskip
\noindent{\bf Claim:}  for $k\ge 4$, $\mathrm{tr}(t_{i_1}\cdots t_{i_k}s)$ can be expressed by
products of traces of shorter products.
\medskip

Indeed, one can easily verify
the identity
\begin{equation}\label{eq:tr(t_1t_2t_3t_4s)}
\mathrm{tr}(t_1t_2t_3t_4s)
=\frac 12(\mathrm{tr}(t_1t_2)\mathrm{tr}(t_3t_4s)+\mathrm{tr}(t_3t_4)\mathrm{tr}(t_1t_2s)-\mathrm{tr}(t_1t_4)\mathrm{tr}(t_2t_3s)),
\end{equation}
implying our claim for $k=4$.
Apply next the fundamental trace identity (see for example \cite[p. 63, Theorem 5.2.4] {Drensky-Formanek}) for the four $3\times 3$ matrices
$t_1t_2$, $t_3t_4$, $t_5$, $s$, and take into acount that $0=\mathrm{tr}(t_i)=\mathrm{tr}(s)=\mathrm{tr}(t_is)$ to get
\begin{eqnarray}\label{eq:tr(t_1t_2t_3t_4t_5s)}
0&=&\mathrm{tr}(t_1t_2t_3t_4t_5s)+\mathrm{tr}(t_3t_4t_5t_1t_2s)+\mathrm{tr}(t_5t_1t_2t_3t_4s)\\ \notag
& & +\mathrm{tr}(t_3t_4t_1t_2t_5s)+\mathrm{tr}(t_1t_2t_5t_3t_4s)+\mathrm{tr}(t_5t_3t_4t_1t_2s)\\ \notag
& & -\mathrm{tr}(t_1t_2)\mathrm{tr}(t_3t_4t_5s)-\mathrm{tr}(t_1t_2)\mathrm{tr}(t_5t_3t_4s)-\mathrm{tr}(t_3t_4)\mathrm{tr}(t_1t_2t_5s)\\ \notag
& & -\mathrm{tr}(t_3t_4)\mathrm{tr}(t_5t_1t_2s)-\mathrm{tr}(t_1t_2t_5)\mathrm{tr}(t_3t_4s)-\mathrm{tr}(t_3t_4t_5)(\mathrm{tr}(t_1t_2s).
\end{eqnarray}
For $f,g\in C_3$ write  $f\equiv g$ if $f-g\in K[T_3]^{\mathrm{SO}_3(K)}_+ C_3$,
where $K[T_3]^{\mathrm{SO}_3(K)}_+$ stands for the sum of the positive degree homogeneous components of $K[T_3]^{\mathrm{SO}_3(K)}$.
Since $[t_i,t_j]$ is a skew-symmetric matrix, the identity \eqref{eq:tr(t_1t_2t_3t_4s)}
implies that
\[
\mathrm{tr}(t_1t_2t_3t_4t_5s)\equiv \mathrm{tr}(t_{\pi(1)}t_{\pi(2)}t_{\pi(3)}t_{\pi(4)}t_{\pi(5)}s)
\text{ for any permutation }\pi\in S_5.
\]
Therefore \eqref{eq:tr(t_1t_2t_3t_4t_5s)} implies $6\mathrm{tr}(t_1t_2t_3t_4t_5s)\equiv 0$.
This settles our claim for $k=5$.
Finally, for $k\ge 6$, recall that $\mathrm{tr}(z_1z_2z_3z_4z_5z_6z_7)$ can be expressed by
traces of shorter products where $z_1,\dots,z_7$ are arbitrary (not necessarily skew-symmetric or symmetric) $3\times 3$ matrices
(see for example \cite[p. 78, Theorem 6.1.6 and p. 79]{Drensky-Formanek}), so our Claim holds for $k\ge 6$ as well.

Thus we proved that $C_3$ is generated as a $K[T_3]^{\mathrm{SO}_3(K)}$-module by
\[
V:=\mathrm{Span}_{K}\{\mathrm{tr}(t_it_js),\ \mathrm{tr}(t_it_jt_ks)\mid i,j,k\in\{1,2,3\}\}.
\]
This is a ${\mathrm{GL}}_3(K)$-submodule of $C_3$.
Consider the surjective ${\mathrm{GL}}_3(K)$-module homomorphism
$\rho:K\langle X_3\rangle^{(2)}\oplus  K\langle X_3\rangle^{(3)}
\to V$ given by $\rho(f(x_1,x_2,x_3))=\mathrm{tr}(f(t_1,t_2,t_3)s)$. As a ${\mathrm{GL}}_3(K)$-module,
$K\langle X_3\rangle^{(2)}$ is generated by $x_1^2$ and $[x_1,x_2]$, whereas
$K\langle X_3\rangle^{(3)}$ is generated by $x_1^3$, $[x_1^2,x_2]$,
$[x_1,[x_1,x_2]]$,  $s_3(x_1,x_2,x_3)=
\sum_{\pi\in S_3}\mathrm{sign}(\pi)x_{\pi(1)}x_{\pi(2)}x_{\pi(3)}$.
Now $\rho([x_1,x_2])$, $\rho(x_1^3)$, $\rho([x_1,[x_1,x_2]])$,
$\rho(s_3(x_1,x_2,x_3))$ are all zero. Hence we conclude
\[
V=\langle e_{11}\rangle_{{\mathrm{GL}}_3(K)}\oplus
\langle e_{112} \rangle_{{\mathrm{GL}}_3(K)}.
\]
Recall that $K[T_3]^{\mathrm{SO}_3(K)}$ is a rank two free $P$-module generated by $1$ and $\mathrm{tr}(t_1t_2t_3)$ by Theorem~\ref{first fundamental theorem},
Theorem~\ref{second fundamental theorem} and Proposition~\ref{isomorphic algebras of invariants}.
Thus by $C_3=K[T_3]^{\mathrm{SO}_3(K)}\cdot V$ we conclude that
$C_3$ is generated as a $P$-module by $V+\mathrm{tr}(t_1t_2t_3)V$.
Next we show that
\begin{equation} \label{eq:tr(t_1t_2t_3)V}
\mathrm{tr}(t_1t_2t_3)V\subseteq PV.
\end{equation}
Indeed, observe that $\mathrm{tr}(t_1t_2t_3)$
spans a $1$-dimensional ${\mathrm{GL}}_3(K)$-invariant subspace in $K[T_3,Z^+_0]$. Therefore to prove \eqref{eq:tr(t_1t_2t_3)V}, it suffices to show that
the ${\mathrm{GL}}_3(K)$-module generators $e_{11}$ and $e_{112}$ are multiplied by
$\mathrm{tr}(t_1t_2t_3)$ into $PV$. This follows from the following two equalities:
\begin{equation}\label{eq:c}
\mathrm{tr}(t_1t_2t_3)e_{11}=
\frac 14 \mathrm{tr}(t_1t_3)e_{112}-\frac 14\mathrm{tr}(t_1t_2)e_{113}
-\frac 1{12} \mathrm{tr}(t_1^2)e_{132}+\frac 1{12}\mathrm{tr}(t_1^2)e_{123}
\end{equation}
\begin{align}\label{eq:d} \mathrm{tr}(t_1t_2t_3)e_{112}
&= \frac 12\left(\mathrm{tr}(t_1t_3)\mathrm{tr}(t_2^2)-\mathrm{tr}(t_1t_2)\mathrm{tr}(t_2t_3)\right)e_{11} \\ \notag
& +\frac 12\left(\mathrm{tr}(t_1^2)\mathrm{tr}(t_2t_3)-\mathrm{tr}(t_1t_2)\mathrm{tr}(t_1t_3)\right)e_{12} \\ \notag
& +\frac 12 \left(\mathrm{tr}(t_1t_2)^2-\mathrm{tr}(t_1^2)\mathrm{tr}(t_2^2)\right)e_{13}
\end{align}
So we proved
\[
C_3=P\langle e_{11}\rangle_{{\mathrm{GL}}_3(K)}+P\langle e_{112} \rangle_{{\mathrm{GL}}_3(K)}.
\]
The above sum is necessarily direct, as the polynomials in the first summand have odd total degree,
whereas the polynomials in the second summand have even total degree.
This finishes the proof of (i).

(ii) We proved above that $\varphi^{(0)}$ is surjective onto $C_3^{(0)}$.
Using \cite{cocoa} we found the following relation:
\begin{eqnarray*}
0&=&\frac 12(\mathrm{tr}(t_2^2)\mathrm{tr}(t_3^2)-\mathrm{tr}(t_2t_3)^2)e_{11}
+(\mathrm{tr}(t_1t_3)\mathrm{tr}(t_2t_3)-\mathrm{tr}(t_1t_2)\mathrm{tr}(t_3^2))e_{12} \\
& & (\mathrm{tr}(t_1t_2)\mathrm{tr}(t_2t_3)-\mathrm{tr}(t_1t_3)\mathrm{tr}(t_2^2))e_{13}
+\frac 12 (\mathrm{tr}(t_1^2)\mathrm{tr}(t_3^2)-\mathrm{tr}(t_1t_3)^2)e_{22}
\\ & & (\mathrm{tr}(t_1t_2)\mathrm{tr}(t_1t_3)-\mathrm{tr}(t_1^2)\mathrm{tr}(t_2t_3))e_{23}
+\frac 12 (\mathrm{tr}(t_1^2)\mathrm{tr}(t_2^2)-\mathrm{tr}(t_1t_2)^2)e_{33}
\end{eqnarray*}
Hence we have established $\psi^{(0)}(P)\subseteq \ker(\varphi^{(0)})$.
Taking into account the Hilbert series of $C_3^{(0)}$ we may conclude the equality
$\psi^{(0)}(P)= \ker(\varphi^{(0)})$. Indeed, by Proposition~\ref{prop:hilbert_series}
we have that the univariate Hilbert series of $C_3$ with the standard $\mathbb{N}_0$-grading
(coming from the action of the subgroup of scalar matrices in ${\mathrm{GL}}_3(K)$) is
\[
\frac{6\tau^2-\tau^6}{(1-\tau^2)^6}.
\]
The Hilbert series of the free module $P^{\oplus 6}$ (endowed with the appropriate grading respected by $\varphi^{(0)}$ is $\frac{6\tau^2}{(1-\tau^2)^6}$.
It follows that the Hilbert series of $\ker(\varphi^{(0)})$ is $\frac{\tau^6}{(1-\tau^2)^6}$,
which obviously agrees with the Hilbert series of the rank one free $P$-submodule
$\psi^{(0)}(P)$ generated by a single element of degree $6$.

(iii) In the proof of (i) we saw already that $\varphi^{(1)}$ is surjective onto $C_3^{(1)}$.
Using \cite{cocoa} we found the relation
\begin{equation}\label{eq:221}
0=\mathrm{tr}(t_2t_3)e_{112}+\mathrm{tr}(t_1t_3)e_{221}-\mathrm{tr}(t_1^2)e_{223}
-\mathrm{tr}(t_2^2)e_{113}+\mathrm{tr}(t_1t_2)e_{123}.
\end{equation}
This means that the first column of the $8\times 3$ matrix in the statement (iii) belongs to
$\ker(\varphi^{(1)})$. Permuting cyclically the matrix variables $t_1,t_2,t_3$ in \eqref{eq:221}
we get two other relations, meaning that the second and third columns
of the $8\times 3$ matrix in \eqref{eq:ker phi^1}
belong to
$\ker(\varphi^{(1)})$. So we have $\psi^{(1)}(P^{\oplus 3})\subseteq \ker(\varphi^{(1)})$.
 As the upper $3\times 3$ minor of the $8\times 3$ matrix in \eqref{eq:ker phi^1}
 has non-zero determinant, we get that $\psi^{(1)}$ is injective, and consequently
 the univariate Hilbert series of $\psi^{(1)}(P^{\oplus 3})$ agrees with
 $\frac{3\tau^5}{(1-\tau^2)^6}$, the Hilbert series of $P^{\oplus 3}$ (graded appropriately).
 On the other hand, by Proposition~\ref{prop:hilbert_series} we know that the Hilbert series of $C_3^{(1)}$
 is $\frac{8\tau^3-3\tau^5}{(1-\tau^2)^6}$. the Hilbert series of $P^{\oplus 8}$ (with the suitable grading)
 is $\frac{8\tau^3}{(1-\tau^2)^6}$, implying that the Hilbert series of
 $\ker(\varphi^{(1)})$ is $\frac{3\tau^5}{(1-\tau^2)^6}$, the same as the Hilbert series
 of $\psi^{(1)}(P^{\oplus 3})$.
This proves the equality $\mathrm{im}(\psi^{(1)})=\ker(\varphi^{(1)})$.
\end{proof}

\begin{theorem}\label{thm:cov_3}
{\rm (i)} The $P$-module ${\mathcal{E}}_3$ has the direct sum decomposition
\[
{\mathcal{E}}_3={\mathcal{E}}_{3,1}\oplus {\mathcal{E}}_{3,2}^{(1)}\oplus{\mathcal{E}}_{3,2}^{(0)}\oplus {\mathcal{E}}_{3,3}^{(0)}\oplus{\mathcal{E}}_{3,3}^{(1)},
\]
where
\begin{align*}
& {\mathcal{E}}_{3,1}=P\cdot I\oplus P\cdot \mathrm{tr}(t_1t_2t_3)I=K[T_3]^{SO_3(K)}\cdot I\subset {\mathcal{E}}_3 \\
& {\mathcal{E}}_{3,2}^{(1)}=P\cdot \langle t_1\rangle_{GL_3(K)} \\
& {\mathcal{E}}_{3,2}^{(0)}= P\cdot \langle [t_1,t_2]\rangle_{GL_3(K)}
\\ & {\mathcal{E}}_{3,3}^{(0)}=P\cdot \langle t_1^2-\frac 13\mathrm{tr}(t_1^2)I\rangle_{GL_3(K)}
\\ & {\mathcal{E}}_{3,3}^{(1)}=P\cdot \langle [t_1^2,t_2]\rangle_{GL_3(K)}.
\end{align*}

{\rm (ii)} Both ${\mathcal{E}}_{3,2}^{(1)}$ and ${\mathcal{E}}_{3,2}^{(0)}$ are free $P$-modules of rank $3$:
\[
{\mathcal{E}}_{3,2}^{(1)}=P\cdot t_1\oplus P\cdot t_2\oplus P\cdot t_3 \text{ and }
{\mathcal{E}}_{3,2}^{(0)}=
P\cdot [t_1,t_2]\oplus P\cdot [t_1,t_3]\oplus P\cdot [t_2,t_3].
\]

{\rm (iii)} The $K$-vector space $\langle t_1^2-\frac 13\mathrm{tr}(t_1^2)I\rangle_{GL_3(K)}$ has the basis
\[
\{f_{ij}=\frac 12 (t_it_j+t_jt_i)-\frac 13 \mathrm{tr}(t_it_j)I\mid 1\le i\le j\le 3\},
\]
and the $P$-module ${\mathcal{E}}_{3,3}^{(0)}$ has the free resolution
\[
0\longrightarrow P\stackrel{\mu^{(0)}}\longrightarrow P^{\oplus 6}\stackrel{\eta^{(0)}}\longrightarrow  C_3^{(0)}\longrightarrow 0,
\]
where denoting by $e_1,e_2,e_3,e_4,e_5,e_6$ the standard generators of $P^6$,
$\eta^{(0)}$ is the $P$-module surjection given by
\[
\eta^{(0)}: e_1\mapsto f_{11}, e_2\mapsto f_{12}, e_3\mapsto f_{13}, e_4\mapsto f_{22}, e_5\mapsto f_{23}, e_6\mapsto f_{33},
\]
and $\mu^{(0)}$ maps the generator of the rank one $P$-module $P$ to the element of
$P^{\oplus 6}$ given in \eqref{eq:ker phi^0} in Theorem~\ref{thm:C3} (ii).

{\rm (iv)} The $K$-vector space $\langle [t_1^2,t_2]\rangle_{GL_3(K)}$ has the  basis
\[
\{f_{iij}=[t_i^2,t_j],\ f_{132}=[t_1t_3+t_3t_1,t_2],\ f_{123}=[t_1t_2+t_2t_1,t_3]\mid i\neq j\in\{1,2,3\}\},
\]
and the $P$-module $C_3^{(1)}$ has the free resolution
\[
0\longrightarrow P^{\oplus 3}\stackrel{\mu^{(1)}}\longrightarrow
P^{\oplus 8}\stackrel{\eta^{(1)}}\longrightarrow  C_3^{(1)}\longrightarrow 0
\]
where denoting by $e_1,e_2,e_3,e_4,e_5,e_6,e_7,e_8$ the standard generators of $P^8$,
$\eta^{(1)}$ is the $P$-module surjection given by
\begin{eqnarray*}
\eta^{(1)}:& e_1\mapsto f_{112}, e_2\mapsto f_{221}, e_3\mapsto f_{113}, e_4\mapsto f_{331},\\
& e_5\mapsto f_{223}, e_6\mapsto f_{332},e_7\mapsto f_{132},
e_8\mapsto f_{123}
\end{eqnarray*}
and $\mu^{(1)}:P^{\oplus 3}\to P^{\oplus 8}$ is given by the matrix in \eqref{eq:ker phi^1}
in Theorem~\ref{thm:C3} (iii).
\end{theorem}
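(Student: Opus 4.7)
The plan is to deduce Theorem~\ref{thm:cov_3} from Proposition~\ref{prop:C2} and Theorem~\ref{thm:C3} by transporting those results from $C$ back to ${\mathcal{E}}_3$ along the $K[T_3]^{\mathrm{SO}_3(K)}$-linear, ${\mathrm{GL}}_3(K)$-equivariant isomorphism $\iota$ of Proposition~\ref{prop:embedding}. The bridge is the observation that the $\mathrm{SO}_3(K)$-invariant Clebsch--Gordan decomposition $M_3(K)=KI\oplus so_3(K)\oplus M_3(K)^+_0$, applied pointwise to the values of an equivariant polynomial map, yields a decomposition
\[
{\mathcal{E}}_3={\mathcal{E}}_{3,\mathrm{scal}}\oplus {\mathcal{E}}_{3,\mathrm{skew}}\oplus {\mathcal{E}}_{3,\mathrm{sym}_0}
\]
as a direct sum of $P$-submodules (which are also ${\mathrm{GL}}_3(K)$-submodules). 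Using the orthogonality of the three summands of $M_3(K)$ under the trace pairing, one checks easily that $\iota$ maps these summands onto $C_1$, $C_2$, $C_3$ respectively.

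First I would handle ${\mathcal{E}}_{3,1}={\mathcal{E}}_{3,\mathrm{scal}}$: clearly it equals $K[T_3]^{\mathrm{SO}_3(K)}\cdot I$, so its rank-two free $P$-module description in (i) follows at once from Corollary~\ref{cor:tr(xyz)} (ii). For the skew part, I would verify by direct computation that $\iota(t_i)=\mathrm{tr}(t_iu)$ and $\iota([t_i,t_j])=2\mathrm{tr}(t_it_ju)$, the second equality using the symmetry of $t_it_j+t_jt_i$ combined with the orthogonality of skew and symmetric matrices under the trace pairing. Proposition~\ref{prop:C2} then gives immediately that $\{t_1,t_2,t_3,[t_1,t_2],[t_1,t_3],[t_2,t_3]\}$ is a free $P$-basis of ${\mathcal{E}}_{3,\mathrm{skew}}$, and separating these six generators by parity of total $\mathbb{N}_0^3$-degree produces the two rank-three summands ${\mathcal{E}}_{3,2}^{(1)}$ and ${\mathcal{E}}_{3,2}^{(0)}$ of (ii).

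For the trace-zero symmetric part I would first observe that each $f_{ij}$, $f_{iij}$, $f_{132}$, $f_{123}$ indeed takes values in trace-zero symmetric matrices (the key point being that the commutator of a symmetric with a skew-symmetric $3\times 3$ matrix is symmetric). A short trace identity computation then gives $\iota(f_{ij})=e_{ij}$, $\iota(f_{iij})=e_{iij}$, $\iota(f_{132})=e_{132}$, $\iota(f_{123})=e_{123}$. Since $t_1^2-\tfrac{1}{3}\mathrm{tr}(t_1^2)I$ and $[t_1^2,t_2]$ are non-zero highest weight vectors in ${\mathcal{E}}_3$ of weights $(2)$ and $(2,1)$ respectively, the ${\mathrm{GL}}_3(K)$-submodules they generate are isomorphic to $W_3(2)$ and $W_3(2,1)$ of dimensions $6$ and $8$. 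The six $f_{ij}$ live in six distinct $\mathbb{N}_0^3$-multihomogeneous components and so form a $K$-basis of the first; for the second, seven of the eight claimed basis elements are separated by their multidegrees, while linear independence of $f_{132}$ and $f_{123}$ (both of multidegree $(1,1,1)$) follows from injectivity of $\iota$ and the linear independence of $e_{132}$ and $e_{123}$ already established in the proof of Theorem~\ref{thm:C3} (iii). Finally, pulling back the resolutions of Theorem~\ref{thm:C3} (ii), (iii) through the $P$-module isomorphism $\iota:{\mathcal{E}}_{3,\mathrm{sym}_0}\cong C_3$ delivers the free resolutions claimed in (iii) and (iv).

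The main obstacle is essentially absent: the deep work has been done in Theorem~\ref{thm:C3} and Proposition~\ref{prop:C2}, and Theorem~\ref{thm:cov_3} is their translation to ${\mathcal{E}}_3$ via $\iota$. The only bookkeeping step requiring care is matching the natural ${\mathrm{GL}}_3(K)$-module generators $f_{ij}, f_{iij}, f_{132}, f_{123}$ chosen on the equivariant side with the trace-linear generators $e_{ij}, e_{iij}, e_{132}, e_{123}$ on the invariant side, and verifying the resulting basis assertions, both of which reduce to elementary trace manipulations.
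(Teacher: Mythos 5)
Your proposal is correct and follows essentially the same route as the paper: both decompose ${\mathcal{E}}_3$ according to the pointwise Clebsch--Gordan splitting $M_3(K)=KI\oplus so_3(K)\oplus M_3(K)^+_0$, verify via elementary trace identities that $\iota$ carries the natural generators $t_i$, $[t_i,t_j]$, $f_{ij}$, $f_{iij}$, $f_{132}$, $f_{123}$ to $\mathrm{tr}(t_iu)$, $2\mathrm{tr}(t_it_ju)$, $e_{ij}$, $e_{iij}$, $e_{132}$, $e_{123}$, and then transport Proposition~\ref{prop:C2} and Theorem~\ref{thm:C3} back through the $P$-module isomorphism $\iota$. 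The only differences are cosmetic (you give slightly more detail on the linear independence of the $f$'s, which the paper absorbs into the references to \eqref{eq:e_ij} and \eqref{eq:e_iij}).
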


\begin{proof}
Consider the ${\mathrm{GL}}_3(K)$-module isomorphism
\[
\iota:{\mathcal{E}}_3\to (K[T_3,Z]^{SO_3(K)})^{(\mathbb{N}_0,\mathbb{N}_0,\mathbb{N}_0,1)},
\quad f\mapsto \mathrm{tr}(fz)
\]
from Proposition~\ref{prop:embedding} (ii).
Write the generic matrix $z$ as the sum
\[
z=\frac 13 \mathrm{tr}(z)I+s+u,\text{ with }s,u\text{ as in Proposition~\ref{prop:C_2C_3gens}}.
\]
We have the equalities
\begin{align*} & 0=\mathrm{tr}(t_i)=\mathrm{tr}([t_i,t_j])=\mathrm{tr}(t_i^2-\frac 13\mathrm{tr}(t_i^2)I)=\mathrm{tr}([t_i^2,t_j])
\\ & 0=\mathrm{tr}(s)=\mathrm{tr}(t_is)=\mathrm{tr}([t_i,t_j]s)
\\ & 0=\mathrm{tr}(u)=\mathrm{tr}(t_i^2u)=\mathrm{tr}([t_i^2,t_j]u)=\mathrm{tr}((t_it_j+t_jt_i)u).
\end{align*}
These equalities show that
\begin{align*}
& \iota(t_i)=\mathrm{tr}(t_iu) \ (i=1,2,3),
\\ & \iota([t_i,t_j])=\mathrm{tr}([t_i,t_j]u)=2\mathrm{tr}(t_it_ju) \ (1\le i<j\le 3\}
\\ & \iota(\frac 12(t_it_j+t_jt_i))=\mathrm{tr}(t_it_js)+\mathrm{tr}(t_it_j)\mathrm{tr}(z),\ (1\le i\le j\le 3).
\end{align*}
Moreover, we have
\begin{align*} \iota(f_{ij})=e_{ij}\ (1\le i\le j\le 3),\
\\ \iota(f_{iij})=e_{iij} \ (i\neq j\in\{1,2,3\}),
\\ \iota(f_{132})=e_{132},\ \iota(f_{123})=e_{123}
\end{align*}
(where $e_{ij}$, $e_{iij}$, $e_{132}$, $e_{123}$ were defined in \eqref{eq:e_ij}, \eqref{eq:e_iij}). 
Since $\iota$ is a $P$-module homomorphism, 
it follows that $\iota$ restricts to isomorphisms
${\mathcal{E}}_{3,1}\stackrel{\cong}\longrightarrow C_1$,
${\mathcal{E}}_{3,2}^{(1)}+{\mathcal{E}}_{3,2}^{(0)}\stackrel{\cong}\longrightarrow C_2$,
${\mathcal{E}}_{3,3}^{(0)}\stackrel{\cong}\longrightarrow C_3^{(0)}$,
and ${\mathcal{E}}_{3,3}^{(1)}\stackrel{\cong}\longrightarrow C_3^{(1)}$.
Thus our statements immediately follow from \eqref{eq:C=}, Corollary~\ref{cor:tr(xyz)} (ii),
Proposition~\ref{prop:C2}, and Theorem~\ref{thm:C3}.
\end{proof}
We record a few relations in ${\mathcal{E}}_3$ that follow from \eqref{eq:a}, \eqref{eq:b},
\eqref{eq:c}, \eqref{eq:d}  by the proof of Theorem~\ref{thm:cov_3}; these relations show the effect of multiplication by
$\mathrm{tr}(t_1t_2t_3)$  on the $P$-module ${\mathcal{E}}_3$ written in the form as in Theorem~\ref{thm:cov_3}:

\begin{proposition}\label{prop:some relations} We have the following equalities:
{\rm (i)}
\[
\mathrm{tr}(t_1t_2t_3)\cdot t_1=\frac 12\left(\mathrm{tr}(t_1^2)\cdot [t_2,t_3]
-\mathrm{tr}(t_1t_2)\cdot [t_1,t_3]+\mathrm{tr}(t_1t_3)\cdot [t_1,t_2]\right)
\]

{\rm (ii)} \begin{align*}\label{eq:b}
\mathrm{tr}(t_1t_2t_3)\cdot [t_1,t_2]&=
\frac{1}{4}\left(\mathrm{tr}(t_1t_3)\mathrm{tr}(t_2^2)-\mathrm{tr}(t_1t_2)\mathrm{tr}(t_2t_3)\right)\cdot t_1\\
&+\frac 14 \left(\mathrm{tr}(t_1^2)\mathrm{tr}(t_2t_3)-\mathrm{tr}(t_1t_2)\mathrm{tr}(t_1t_3)\right)\cdot t_2
\\  & +\frac 14\left(\mathrm{tr}(t_1t_2)^2-\mathrm{tr}(t_1^2)\mathrm{tr}(t_2^2)\right)\cdot t_3
\end{align*}

{\rm (iii)}
\[
\mathrm{tr}(t_1t_2t_3)f_{11}=
\frac 14 \mathrm{tr}(t_1t_3)f_{112}-\frac 14\mathrm{tr}(t_1t_2)f_{113}
-\frac 1{12} \mathrm{tr}(t_1^2)f_{132}+\frac 1{12}\mathrm{tr}(t_1^2)f_{123}
\]

{\rm (iv)}
\begin{align*}\mathrm{tr}(t_1t_2t_3)f_{112}
&= \frac 12\left(\mathrm{tr}(t_1t_3)\mathrm{tr}(t_2^2)-\mathrm{tr}(t_1t_2)\mathrm{tr}(t_2t_3)\right )f_{11}
\\ & +\frac 12\left(\mathrm{tr}(t_1^2)\mathrm{tr}(t_2t_3)-\mathrm{tr}(t_1t_2)\mathrm{tr}(t_1t_3)\right)f_{12} \\
& +\frac 12 \left(\mathrm{tr}(t_1t_2)^2-\mathrm{tr}(t_1^2)\mathrm{tr}(t_2^2)\right)f_{13}
\end{align*}
\end{proposition}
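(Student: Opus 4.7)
The plan is to deduce each of the four identities by applying the $\mathrm{GL}_3(K)$-module isomorphism
$\iota : {\mathcal{E}}_3 \stackrel{\cong}{\longrightarrow} C$ from Proposition~\ref{prop:embedding}~(ii)
and appealing to the corresponding relations already verified inside $C$. Recall that
$\iota$ is a homomorphism of $K[T_3]^{\mathrm{SO}_3(K)}$-modules (because the multiplication action of $K[T_3]^{\mathrm{SO}_3(K)}$ on both sides is just scalar multiplication in $M_3(K[T_3])$
pre- or post-composed with $\mathrm{tr}(\cdot \,z)$), so it suffices to check each identity after applying $\iota$.

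From the computations carried out in the proof of Theorem~\ref{thm:cov_3} we already have the
dictionary
\[
\iota(t_i)=\mathrm{tr}(t_iu),\quad \iota([t_i,t_j])=2\,\mathrm{tr}(t_it_ju),\quad
\iota(f_{ij})=e_{ij},\quad \iota(f_{iij})=e_{iij},
\]
together with $\iota(f_{132})=e_{132}$ and $\iota(f_{123})=e_{123}$.
Applying $\iota$ to the proposed identity~(i) and clearing the common factor $2$ (coming from
$\iota([t_i,t_j])$) yields precisely the relation \eqref{eq:a}, which was established as a consequence of the Second Fundamental Theorem of invariants of $\mathrm{SO}_3(K)$ (Theorem~\ref{second fundamental theorem}) combined with
Proposition~\ref{isomorphic algebras of invariants}. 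Analogously, applying $\iota$ to
(ii) and matching the factors of~$2$ on both sides converts it into \eqref{eq:b}. The identities (iii) and (iv) translate under $\iota$ directly, without scalar adjustments, into \eqref{eq:c} and \eqref{eq:d}, which were verified by the \cocoa{} computation referenced in the proof of Theorem~\ref{thm:C3}.

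Since $\iota$ is injective, each identity in $C$ pulls back to the corresponding identity in ${\mathcal{E}}_3$, completing the proof. There is no real obstacle: the entire content of the proposition is bookkeeping that repackages the invariant-theoretic relations \eqref{eq:a}--\eqref{eq:d} inside the module of covariants. The only point requiring care is the consistent tracking of the factor $2$ produced by $\iota([t_i,t_j])=2\,\mathrm{tr}(t_it_ju)$, which is what causes the $\tfrac{1}{2}$ (resp.\ $\tfrac{1}{4}$) coefficients in (i) and (ii) to correspond to the $1$ (resp.\ $\tfrac{1}{8}$) coefficients in \eqref{eq:a} and \eqref{eq:b}.
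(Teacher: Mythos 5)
Your proposal is correct and matches the paper's own (implicit) argument: the paper introduces Proposition~\ref{prop:some relations} precisely as the translation of the relations \eqref{eq:a}--\eqref{eq:d} through the isomorphism $\iota$ via the dictionary established in the proof of Theorem~\ref{thm:cov_3}. Your bookkeeping of the factor $2$ from $\iota([t_i,t_j])=2\,\mathrm{tr}(t_it_ju)$ is exactly what reconciles the coefficients $\tfrac12,\tfrac14$ in (i), (ii) with the coefficients $1,\tfrac18$ in \eqref{eq:a}, \eqref{eq:b}, so nothing is missing.
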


For an arbitrary $p\ge 3$, denote by $A_p$ the subalgebra of $K[T_p]^{\text{SO}_p(K)}$
generated by $\mathrm{tr}(t_it_j)$, $1\le i\le j\le p$ (note that for $p\ge 4$, $A_p$ is not a polynomial algebra).
The algebra ${\mathcal{E}}_p$ is naturally an $A_p$-module.
Now Theorem~\ref{thm:cov_3} and
Corollary~\ref{cor:weyl} imply the following:

\begin{proposition} \label{prop:A_p}
For any $p\ge 3$, the $A_p$-module ${\mathcal{E}}_p$ decomposes as
\begin{align*}{\mathcal{E}}_p=A_p\cdot I\oplus A_p \cdot \mathrm{tr}(t_1t_2t_3)I \oplus A_p\cdot \langle t_1\rangle_{GL_p(K)}
\oplus A_p\cdot \langle [t_1,t_2]\rangle_{GL_p(K)}
\\
\oplus A_p\cdot \langle t_1^2-\frac 13\mathrm{tr}(t_1^2)I\rangle_{GL_p(K)}
\oplus A_p\cdot \langle [t_1^2,t_2]\rangle_{GL_p(K)}.
\end{align*}
In particular, as an $A_p$-module, ${\mathcal{E}}_p$ is generated by
\[
I,\ \mathrm{tr}(t_1t_2t_3)I,\ t_i,\ t_it_j, \ t_it_jt_k \quad1\le i,j,k\le p.
\]
\end{proposition}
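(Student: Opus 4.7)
The plan is to reduce the statement to the case $p=3$, already proved in Theorem~\ref{thm:cov_3}, via the polarization result Corollary~\ref{cor:weyl}, which asserts ${\mathcal{E}}_p=\langle {\mathcal{E}}_3\rangle_{\mathrm{GL}_p(K)}$. A preliminary observation is that $A_p=\langle A_3\rangle_{\mathrm{GL}_p(K)}$, since $g\cdot\mathrm{tr}(t_it_j)=\mathrm{tr}((g\cdot t_i)(g\cdot t_j))$ is a $K$-linear combination of the defining generators of $A_p$. From the identity $g\cdot(a\cdot m)=(g\cdot a)(g\cdot m)$ for $g\in\mathrm{GL}_p(K)$ one then deduces
\[
\langle A_3\cdot V\rangle_{\mathrm{GL}_p(K)}=A_p\cdot \langle V\rangle_{\mathrm{GL}_p(K)}
\]
for any $\mathrm{GL}_3(K)$-submodule $V\subseteq M_3(K[T_3])$. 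Applying this to each of the five cyclic $A_3$-summands of ${\mathcal{E}}_3$ produced by Theorem~\ref{thm:cov_3}(i) and summing yields ${\mathcal{E}}_p$ as the sum of the six $A_p$-modules listed (where the second summand is to be understood after $\mathrm{GL}_p(K)$-closure for $p>3$).

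To establish directness I would use the $\mathrm{SO}_3(K)$-isotypic decomposition
\[
M_3(K)=KI\oplus so_3(K)\oplus M_3(K)^+_0,
\]
which induces a direct sum decomposition of ${\mathcal{E}}_p$ as an $A_p$-module. The first two claimed summands sit in the scalar piece, the third and fourth in the skew piece, and the fifth and sixth in the trace-zero symmetric piece. Within each of the three isotypic blocks, the two remaining summands are separated by the parity of the total $\mathbb{N}_0^p$-degree: $A_p$ is concentrated in even degrees (being generated by $\mathrm{tr}(t_it_j)$ of degree $2$), while of the six cyclic generators, the three $I$, $[t_1,t_2]$, $t_1^2-\tfrac 13\mathrm{tr}(t_1^2)I$ are even-degree and the three $\mathrm{tr}(t_1t_2t_3)I$, $t_1$, $[t_1^2,t_2]$ are odd-degree. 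For the scalar block, this parity argument amounts to the decomposition
\[
K[T_p]^{\mathrm{SO}_3(K)}=A_p\oplus A_p\cdot \langle \mathrm{tr}(t_1t_2t_3)\rangle_{\mathrm{GL}_p(K)},
\]
which follows from Corollary~\ref{cor:tr(xyz)} together with the fact that a product $\mathrm{tr}(t_{i_1}t_{i_2}t_{i_3})\cdot \mathrm{tr}(t_{j_1}t_{j_2}t_{j_3})$ always lies in $A_p$ (a Gram-determinant identity, a consequence of Theorem~\ref{second fundamental theorem} transported via Proposition~\ref{isomorphic algebras of invariants}).

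For the ``In particular'' assertion, each listed matrix product decomposes in $M_3(K)$ according to the $\mathrm{SO}_3(K)$-isotypic decomposition: $t_it_j=\tfrac 13\mathrm{tr}(t_it_j)I+\tfrac 12[t_i,t_j]+f_{ij}$ and analogously for $t_it_jt_k$. The scalar piece $\mathrm{tr}(t_it_jt_k)I$ for arbitrary $i,j,k$ is recovered via the standard polynomial identity $s_3(t_i,t_j,t_k)=2\mathrm{tr}(t_it_jt_k)I$, which exhibits it as a $K$-linear combination of the $t_it_jt_k$, so the listed generators suffice.

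The main obstacle in this plan is the scalar-component directness, i.e.\ verifying the decomposition $K[T_p]^{\mathrm{SO}_3(K)}=A_p\oplus A_p\cdot \langle \mathrm{tr}(t_1t_2t_3)\rangle_{\mathrm{GL}_p(K)}$. Once this is in hand, the combined isotypic/parity argument reduces directness of the full six-term decomposition to essentially formal bookkeeping, and the generation statement is immediate from the isotypic decomposition of monomials in the $t_i$'s of length at most three.
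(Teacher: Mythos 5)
Your proposal is correct and is essentially a fleshed-out version of the paper's own (one-line) derivation of this proposition from Theorem~\ref{thm:cov_3} and Corollary~\ref{cor:weyl}: the sum decomposition comes from polarizing the $P$-module decomposition of ${\mathcal{E}}_3$, and directness from the $\mathrm{SO}_3(K)$-isotypic decomposition of the target $M_3(K)$ combined with the parity of the total degree. Your observation that the second summand must be read as $A_p\cdot\langle \mathrm{tr}(t_1t_2t_3)I\rangle_{{\mathrm{GL}}_p(K)}$ for $p>3$ is a correct and worthwhile clarification of the statement (a multidegree count shows $\mathrm{tr}(t_2t_3t_4)I\notin A_4\cdot I+A_4\cdot\mathrm{tr}(t_1t_2t_3)I$), and it is consistent with the ``in particular'' part, which lists all $t_it_jt_k$ as generators.
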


Proposition~\ref{prop:A_p} implies that for $m\ge 4$, any product
$t_{i_1} t_{i_2}\cdots t_{i_m}$ is contained in $A_p^+\cdot {\mathcal{E}}_p$, where
$A_p^+$ stands for the ideal in $A_p$ generated by $\mathrm{tr}(t_it_j)$, $1\le i\le j\le p$.
 A more direct explanation of this fact is given by the following identity:

 \begin{proposition}
 We have the equality
 \begin{align*}
 t_1t_2t_3t_4&=\frac 14\left(\mathrm{tr}(t_1t_4)\mathrm{tr}(t_2t_3)-\mathrm{tr}(t_1t_2)\mathrm{tr}(t_3t_4)\right)I
 \\ & +\frac 12\left(\mathrm{tr}(t_1t_2)t_3t_4-\mathrm{tr}(t_3t_4)t_1t_2-\mathrm{tr}(t_1t_4)t_3t_2\right).
 \end{align*}
 \end{proposition}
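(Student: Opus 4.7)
The plan is to apply the isomorphism $\iota$ from Proposition~\ref{prop:embedding} (ii) to convert the claimed matrix identity in ${\mathcal{E}}_p$ into an equivalent trace identity among $\mathrm{SO}_3(K)$-invariants, then verify that identity termwise. Applying $\iota: f \mapsto \mathrm{tr}(fz)$ to both sides yields the equivalent assertion
\begin{align*}
\mathrm{tr}(t_1 t_2 t_3 t_4 z) =\ & \tfrac{1}{4}\bigl(\mathrm{tr}(t_1 t_4)\mathrm{tr}(t_2 t_3) - \mathrm{tr}(t_1 t_2)\mathrm{tr}(t_3 t_4)\bigr)\mathrm{tr}(z)
\\ & + \tfrac{1}{2}\bigl(\mathrm{tr}(t_1 t_2)\mathrm{tr}(t_3 t_4 z) - \mathrm{tr}(t_3 t_4)\mathrm{tr}(t_1 t_2 z) - \mathrm{tr}(t_1 t_4)\mathrm{tr}(t_3 t_2 z)\bigr)
\end{align*}
for all $t_1,\dots,t_4\in so_3(K)$ and $z\in M_3(K)$, and since $\iota$ is injective, the original identity follows once this trace identity is established.

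Next I would decompose $z = \tfrac{1}{3}\mathrm{tr}(z)\,I + s + u$ along the $\mathrm{SO}_3(K)$-isotypic decomposition $M_3(K) = KI \oplus M_3(K)^+_0 \oplus so_3(K)$ used in Section~\ref{sec:covariants}, and verify the trace identity on each summand separately by $K$-linearity in $z$. The contribution of the symmetric trace-free part $s$ is handled directly by identity \eqref{eq:tr(t_1t_2t_3t_4s)} from the proof of Theorem~\ref{thm:C3}, which in turn is the fundamental trace identity for $3\times 3$ matrices applied to $t_1 t_2$, $t_3 t_4$, $s$. The scalar component $\tfrac13\mathrm{tr}(z)\,I$ reduces the claim to an explicit expression for $\mathrm{tr}(t_1 t_2 t_3 t_4)$ as a $K$-linear combination of products of two quadratic traces in the $t_i$; this follows by polarizing the Cayley--Hamilton relation $a^4 = \tfrac12 \mathrm{tr}(a^2)\,a^2$ (valid for $a\in so_3(K)$) in the four variables $t_1,t_2,t_3,t_4$ and collecting the multilinear part. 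For the skew-symmetric component $u$, the product $t_1 t_2 t_3 t_4 u$ is a product of five matrices in $so_3(K)$, and the required trace identity is derived by combining a similar polarization argument with the sign-alternation relation $\mathrm{tr}(t_{\pi(1)} t_{\pi(2)} t_{\pi(3)}) = \mathrm{sign}(\pi)\,\mathrm{tr}(t_1 t_2 t_3)$ recorded in Section~\ref{subsec:invariant theory of SO_3}.

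Since both sides of the claim are multilinear in $t_1,t_2,t_3,t_4$ and take values in $M_3(K[T_p])$, a more computational alternative is to substitute each $t_i$ from the three-element basis $\{a_1,a_2,a_3\}$ of $so_3(K)$ given in the proof of Proposition~\ref{isomorphic algebras of invariants} and check the resulting $3^4=81$ matrix equations in $M_3(K)$ by hand or via CoCoA, in the same spirit as Lemma~\ref{linearly independent evaluations}. The main obstacle in the conceptual approach is bookkeeping the signs produced by permuting products of skew-symmetric matrices: products of an even number of such matrices have both symmetric and skew parts whose traces transform differently under reversal, and this is precisely what forces the asymmetric appearance of $t_3 t_2$ (rather than $t_2 t_3$) in the last summand on the right-hand side.
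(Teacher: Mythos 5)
Your overall strategy---push the identity through the embedding $\iota$ of Proposition~\ref{prop:embedding}, split $z$ into its $\mathrm{SO}_3(K)$-isotypic components, and verify the resulting trace identity componentwise (or, alternatively, check all $3^4$ basis substitutions by multilinearity)---is sound in principle and genuinely different from the paper's route, which first uses Proposition~\ref{prop:A_p} to pin down the possible shape of $t_1t_2t_3t_4$ as an $A_p$-linear combination, determines the coefficients by computer algebra in \eqref{eq:t1t2t3t4}, and then substitutes the definitions of $f_{12},f_{23},f_{34}$.

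However, the one component you claim is ``handled directly'' is exactly where the argument breaks down. Restricting the asserted identity to $z=s$ (using $\mathrm{tr}(s)=0$ and $\mathrm{tr}(t_3t_2s)=\mathrm{tr}(t_2t_3s)$) gives
\[
\mathrm{tr}(t_1t_2t_3t_4s)=\tfrac12\bigl(\mathrm{tr}(t_1t_2)\mathrm{tr}(t_3t_4s)-\mathrm{tr}(t_3t_4)\mathrm{tr}(t_1t_2s)-\mathrm{tr}(t_1t_4)\mathrm{tr}(t_2t_3s)\bigr),
\]
whereas \eqref{eq:tr(t_1t_2t_3t_4s)} has $+\mathrm{tr}(t_3t_4)\mathrm{tr}(t_1t_2s)$ in the middle term. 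These are not the same identity, so \eqref{eq:tr(t_1t_2t_3t_4s)} does not ``directly'' yield what you need; had you actually carried out the comparison you assert, you would have found that the Proposition as displayed contains a sign error. Indeed, for $t_1=t_2=a_1$, $t_3=t_4=a_2$ the left-hand side is $a_1^2a_2^2=\mathrm{diag}(1,0,0)$ while the stated right-hand side evaluates to $\mathrm{diag}(-1,-2,0)$; the correct identity---the one that actually follows from \eqref{eq:t1t2t3t4} upon substituting the $f_{ij}$, and the one your $s$-component computation would produce---has $+\mathrm{tr}(t_3t_4)t_1t_2$ in the second bracket. Beyond this, the scalar and skew-symmetric components of your verification are only gestured at (polarizing Cayley--Hamilton, invoking the alternating property of $\mathrm{tr}(t_it_jt_k)$) and not carried out, so even after correcting the sign the proposal is a plan rather than a proof; the $81$-substitution alternative would certainly work but is likewise not executed.
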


\begin{proof}
Proposition~\ref{prop:A_p} implies that $t_1t_2t_3t_4$ must be a $K$-linear combination
of $\mathrm{tr}(t_{\pi(1)}t_{\pi(2)})\mathrm{tr}(t_{\pi(3)}t_{\pi(4)})I$, $\mathrm{tr}(t_{\pi(1)}t_{\pi(2)})[t_{\pi(3)},t_{\pi(4)}]$,
$\mathrm{tr}(t_{\pi(1)}t_{\pi(2)})f_{\pi(3)\pi(4)}$, $\pi\in S_4$.
The actual coefficients were found using \cite{cocoa}:
\begin{align}\label{eq:t1t2t3t4}t_1 t_2 t_3 t_4 &=
 \frac{1}{12}\left(\mathrm{tr}(t_1t_2)\mathrm{tr}(t_3t_4)+\mathrm{tr}(t_1t_4)\mathrm{tr}(t_2t_3) \right)I
 \\ \notag & +\frac 14\left(\mathrm{tr}(t_3t_4)[t_1,t_2]+\mathrm{tr}(t_1t_4)[t_2,t_3]+\mathrm{tr}(t_1t_2)[t_3,t_4]\right)
 \\ \notag & +\frac 12\left(\mathrm{tr}(t_3t_4)f_{12} -\mathrm{tr}(t_1t_4)f_{23}+\mathrm{tr}(t_1t_2)f_{34}\right).
 \end{align}
 Plugging in the explicit expressions for $f_{12}$, $f_{23}$, $f_{34}$ on the right hand side of the above formula, we obtain the desired statement.
\end{proof}

\begin{remark}
Based on Theorem~\ref{thm:cov_3} and its proof,
it is possible to give a normal form for the elements in ${\mathcal{E}}_3$.
With an iterated use of \eqref{eq:t1t2t3t4} it is possible to rewrite the product of any two $P$-module generators of ${\mathcal{E}}_3$ in normal form.
This way one obtains a normal form plus a rewriting algorithm for products of elements given in normal form.
The result is complicated and technical, so we leave out the details.
\end{remark}

\section*{Acknowledgements}

We thank an anonymous referee for several comments improving 
the presentation. In particular, it was the referee's observation that   
the action of the multiplicative group can be used to give a 
direct proof of surjectivity of $\mu^*$ in Lemma~\ref{lemma:a4a5}.

\end{document}